\DeclareMathOperator{\supp}{supp}
\newtheorem{con}{Conjecture}
\newtheorem{theo}{Theorem}
\theoremstyle{definition}
\newtheorem{disc}{Discussion}
\newtheorem{obs}{Observation}
\theoremstyle{remark}
\newtheorem*{rem}{Remark}
\renewcommand\p@enumi{\arabic{section}.}
\def\eps{\varepsilon}
\begin{document}

\title{Mean-field coupling of identical expanding circle maps}
\title{\textbf{Mean-field coupling of identical expanding circle maps}}
\author{\textbf{Fanni S\'elley and P\'eter B\'alint}
\medskip\\
MTA-BME Stochastics Research Group\\
Budapest University of Technology and Economics\\
Egry J\'ozsef u. 1, H-1111 Budapest, Hungary
\medskip \\and \medskip \\
 Department of Stochastics, Institute of Mathematics,\\
Budapest University of Technology and Economics\\
Egry J\'ozsef u. 1, H-1111 Budapest, Hungary\\
\texttt{selley@math.bme.hu, pet@math.bme.hu}\\
}
\date{\today}
\maketitle

\begin{abstract}
Globally coupled doubling maps are studied in this paper.
In this setting and for finitely many sites, two distinct bifurcation values
of the coupling strength have been identified in the literature, corresponding
to the emergence of contracting directions (\cite{koiller2010coupled}) and,
specifically for $N=3$ sites, to the loss of ergodicity (\cite{fernandez2014breaking}).
On the one hand, we reconsider these results and provide an interpretation of the observed dynamical phenomena
in terms of the synchronization of the sites. On the other hand, we initiate a new point of view which focuses on the
evolution of distributions and allows to incorporate the investigation of a continuum of sites. In particular,
we observe phenomena that is analogous to the limit states of the contracting regime of
$N=3$ sites.
\end{abstract}

\section{Introduction}

In this paper we study systems of coupled maps. In a broad sense, this means that we are given a network of (finitely or infinitely many) interacting particles, termed sites. It is convenient to think of the network as a (directed or undirected) graph, with a site located at each node, typically modeled by a discrete time dynamical system. The dynamics of the compound system consist of two components: in addition to the evolution of individual maps, the sites interact along the edges of the graph. The form of the interaction can be of several types -- later we restrict to one of the most natural choices, a diffusive coupling -- yet it is typically the case that there is some external parameter that measures the strength of the coupling.

From a statistical mechanics point of view, arguably the main interest in such models is the emergence of bifurcations: how do the characteristic features of such a compound system change when the interaction strength is varied. When there is no coupling, the individual sites behave independently; on the other hand for strong interactions some kind of synchronization can be expected. Such phenomena can be thought of as a deterministic analogue of the phase transitions of Ising models in statistical physics (\cite{bunimovich1988spacetime}, \cite{miller1993macroscopic}, \cite{gielis2000coupled}).

The topic has an extensive literature and we only cite papers with more direct relevance to our work, for more complete lists see the references in \cite{koiller2010coupled}, \cite{fernandez2014breaking}, \cite{keller2006uniqueness}  and the collection \cite{chazottes2005dynamics}.
A particularly popular scenario is that of \textit{coupled map lattices}, which corresponds to an \textit{infinite graph} (typically $\mathbb{Z}^d$) modeling the network of interaction. Mathematically rigorous results use most sophisticated tools of dynamical systems theory, and typically prove the lack of phase transitions (unique SRB measure) for small interaction strength (\cite{bunimovich1988spacetime}, \cite{bricmont1996high}, \cite{jarvenpaa1997srb}, \cite{jiang1998equilibrium}, \cite{fischer2000transfer}, \cite{keller2005spectral}, \cite{keller2006uniqueness}). Similar phenomenon is observable when the coupling strength is not necessarily low, but the interaction network is a sufficiently sparse graph. In this case the sparsity of the interaction network causes the weakness of interaction and a behaviour similar to that of the uncoupled system. For example, random networks with a few hubs and many low degree nodes were studied by Pereira, van Strien and Lamb \cite{pereira2013dynamics}. There are only a few mathematically rigorous results which prove the presence of a phase transition in the infinite system (the thermodynamic limit) for certain specific situations (\cite{bardet2009stochastically}, \cite{bardet2006phase}).

Of special interest to us is the case of \textit{globally (or mean field) coupled maps}. In this case, with quadratic or tent maps at the individual sites, unexpected dynamical behaviour (termed violation of the law of large numbers by Kaneko) has been observed (\cite{kaneko1990globally}, \cite{kaneko1995remarks}, \cite{ershov1995mean}, \cite{nakagawa1996dominant}). When the individual site maps are smooth expanding circle maps, such phenomena are absent (\cite{jarvenpaa1997srb}, \cite{keller2000ergodic}); on the other hand, \cite{bardet2009stochastically} proved the presence of a phase transition analogous to that
of the Curie-Weiss model for a special class of smooth (fractional linear) site maps, a bifurcation that occurs exclusively in the infinite system (the thermodynamic limit).

A parallel line of investigation, recently (re)initiated in \cite{koiller2010coupled}, is to consider \textit{finitely many sites}, however, not only in the weakly coupled case, but \textit{for a wider range of coupling parameters}. Such models can be regarded as dynamical systems acting on some finite, yet high dimensional manifold. As the individual site dynamics are expanding, we get a fully expanding system for weak coupling, but for stronger coupling strength, \textit{contracting directions emerge} as a sign of synchronization (\cite{just1995globally}, \cite{boldrighini1995ising}, \cite{boldrighini2001ising}). Young and Koiller \cite{koiller2010coupled} focused on how the geometrical and topological properties of the system behave at such transitions.

Later Fernandez \cite{fernandez2014breaking} extended this analysis to ergodic properties. He studied \textit{mean field diffusive coupling of doubling maps}, and observed \textit{another transition prior to the emergence of contracting directions}. By general arguments, we know that for small coupling strength there exists a unique ergodic absolutely continuous invariant measure. However, as the coupling strength is increased, Fernandez detected \textit{the appearance of multiple ergodic components}. More precisely, it is proved in \cite{fernandez2014breaking} that for \textit{$N=3$ sites}, if the parameter is raised above a certain value, there exist at least two ergodic components (distinct absolutely continuous invariant measures).\footnote{In fact, \cite{fernandez2014breaking} detected two distinct values of the coupling parameter, $\eps_1<\eps_2$, both within the expanding regime, and showed that there are at least two ergodic components for $\eps\in(\eps_1,\eps_2)$, and that there are at least six ergodic components for $\eps>\eps_2$. Here we show that there are at least six ergodic components for any $\eps>\eps_1$.} Furthermore, based on numerical simulations and geometric considerations, Fernandez argued that the emergence of multiple ergodic components corresponds to the breaking of certain symmetries, namely the inversion and the permutation symmetry of the site locations on the circle. It is important to point out that the observed breaking of ergodicity occurs already in the expanding regime of the coupling parameter. For contracting parameters, there are two possible limit behaviours for $N=3$ sites as already observed in \cite{koiller2010coupled}. Based on simulations Fernandez expected similar phenomena if $N\ge 4$, which, however, turns out to be too complicated for analytic considerations. Finally, he showed that there is no ergodicity breaking for $N=2$ sites.

In this paper we would like to address the following questions:
\begin{itemize}
\item Is there a way to identify the different limit behaviours, in particular, the ergodicity breaking observed in \cite{fernandez2014breaking} at the level of individual sites? In particular, is it possible to interpret these bifurcations as some synchronization phenomena, and if yes, in what sense?
\item How general are the observed phenomena as far as the number of sites is concerned, specifically, do they occur in the thermodynamic limit? Note that, as pointed out in \cite{fernandez2014breaking}, increasing the number of sites (in fact, already $N=4$) results in geometric complications, hence it may be more feasible to study directly a system that can be interpreted as a version of the model with infinitely many sites .
\end{itemize}

As we intend to follow up on the results of Fernandez, \textit{we restrict to mean field diffusive coupling of doubling maps} from now on. In accordance with the questions formulated above, we aim to investigate both the finite system, and the system of a continuum of sites. On the one hand, for \textit{finitely many sites} the results of \cite{fernandez2014breaking} are reconsidered and slightly extended. In the case of \textit{coupling two maps} we show that the ergodic invariant measure, even though unique for the whole expanding regime, \textit{ceases to be mixing} after the coupling strength is increased above some threshold value. In the case of \textit{coupling three maps}, we provide a \textit{better upper bound} for the threshold value of the coupling parameter where (at least) \textit{six ergodic components} emerge. Furthermore, the different ergodic components are \textit{identified at the level of the individual sites}. This identification provides further evidence that the loss of ergodicity corresponds to the breaking of the inversion and permutation symmetries. In particular, some of the ergodic components can be mapped onto each other by relabeling the sites, hence this phenomenon is not fully observable if indistinguishable sites are considered.

On the other hand, the model is reconsidered from a different point of view, instead of the individual sites, we investigate the \textit{evolution of the distribution} (the measure) they generate on the circle. Pure point measures -- when the distribution is an average of $N$ Dirac masses -- correspond precisely to the system of a finite number of coupled sites studied in\cite{fernandez2014breaking}. However, this viewpoint allows us to extend our investigation to \textit{absolutely continuous distributions}, a property that is preserved by the dynamics. This later setting will be referred to as a \textit{continuum of sites}. Our motivation is to investigate a system which can be regarded as an infinite-site-version of the model.
We believe that, for a suitable sequence of initial conditions, the behaviour of a continuum of sites could be obtained as a limit of the models with a finite number of sites; however, to avoid confusion, it is important to point out that we do not consider, and do not claim anything about this limiting process here. Nonetheless, simply by investigating the evolution of distributions, we may make some comparison of the finite and the continuum cases. We observe bifurcations in the system of a continuum of sites which correspond to the expanding-contracting transition, and accordingly, two limit behaviours are identified which are analogous to those observed already in \cite{koiller2010coupled} (and reconsidered in \cite{fernandez2014breaking}) within the contracting regime for $N=3$ sites. However, no phenomenon analogous to the breaking of ergodicity within the expanding regime is observed for a continuum of sites. As for the breaking of the permutation symmetry, this may be related to the fact that in the thermodynamic limit of a mean field model it does not seem natural to distinguish the sites.

The rest of this paper is organized as follows. In section \ref{2} we provide the setting and state our results.  These are formulated, on the one hand, as mathematical theorems, but on the other hand some discussions are also added to explain what type of synchronization phenomena is observed at the level of the sites. In section \ref{3} we deal with the case of finite system size: after some general observations we give a detailed analysis of systems with two and three interacting sites. In section \ref{4} we turn to the case of the system with a continuum of sites: first we study the case of weak coupling, which results in uniformly distributed
sites. Then we move on to show full synchronization when the sites have a well-concentrated initial distribution, and the coupling strength is sufficiently strong. In section \ref{5} we give some concluding remarks.

\section{Setting and statement of results} \label{2}

Let us consider a distribution on the circle $\mathbb{T}=\mathbb{R} \backslash \mathbb{Z}$ defined by the probability measure $\mu$. This distribution represents the initial state
of our coupled map system. Our investigation focuses on two particular cases: \begin{itemize}
\item $\mu$ is an average of a finite number of Dirac masses. It is shown below that this corresponds to a finite number of interacting sites studied in \cite{fernandez2014breaking}.
    \item $\mu$ is absolutely continuous (with respect to Lebesgue measure). This case can be regarded as an infinite-site-version of the model, and will be referred to as a \textit{continuum of sites} from now on.
\end{itemize}
    The position of each site evolves according to the doubling map and the interaction with other sites. Let
\[
F_{\mu}=d \circ \Phi_{\mu},
\]
where the individual dynamics is defined by the doubling map
\[
d:  \text{ } \mathbb{T} \to \mathbb{T}, \qquad
d(x) =2x \quad \text{mod 1}
\]
and
\[
\Phi_{\mu}:  \text{ }  \mathbb{T} \to \mathbb{T}, \qquad
\Phi_{\mu}(x)=x+\varepsilon \int_0^1 g(y-x)\text{ d}\mu(y) \qquad \text{mod 1}
\]
is the so-called coupling map, where $0 \leq \varepsilon < 1$ is the strength of interaction. The function $g$ is the signed distance on the torus, namely the lift of
\[
g(u)=\begin{cases}
0 & \text{ if } u = \pm \frac{1}{2} , \\
u& \text{ if } u \in \left(-\frac{1}{2}, \frac{1}{2} \right)
\end{cases}
\]
to $\mathbb{R}$, see figure \ref{g}.
Hence
\begin{equation} \label{main0}
F_{\mu}:  \text{ } \mathbb{T} \to \mathbb{T}, \qquad
F_{\mu}(x)=2\left(x+\varepsilon\int_{0}^1g(y-x)\text{ d}\mu(y)\right) \qquad \text{ mod 1}.
\end{equation}

Define $\mu'=(F_{\mu})_{\ast}\mu$ as the distribution of the sites after one time-step. The dynamics in this next time step will be $F_{\mu'}$. As we can see, this is not a dynamical system in the traditional sense: the dynamics will be different in each time step, and it will be defined by the current distribution of the sites.
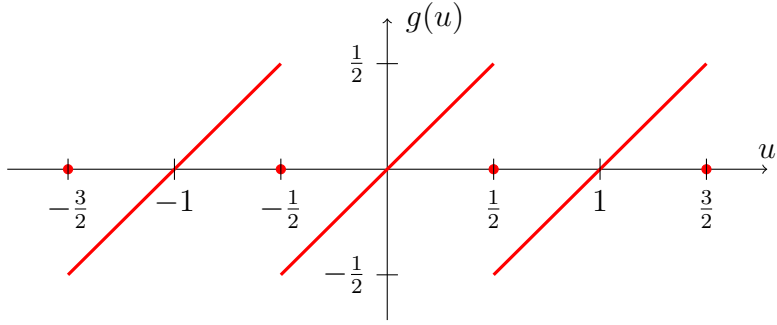
\begin{figure}[h!]
 \centering
 \begin{tikzpicture}[scale=2]
       \draw[->] (-2.5,0) -- (2.5,0) node[above] {$u$};
       \draw[->] (0,-1) -- (0,1) node[right] {\hspace{0.1cm}$g(u)$};
       \draw[very thick,red] (-0.7,-0.7) -- (0.7,0.7);
       \draw[very thick,red] (0.7,-0.7) -- (2.1,0.7);
       \draw[very thick,red] (-0.7,0.7) -- (-2.1,-0.7);
       \draw[red, fill] (0.7,0) circle (0.03cm);
       \draw[red, fill] (-0.7,0) circle (0.03cm);
       \draw[red, fill] (2.1,0) circle (0.03cm);
       \draw[red, fill] (-2.1,0) circle (0.03cm);
       \foreach \x/\xtext in {0.7/\frac{1}{2},1.4/1, 2.1/\frac{3}{2},-0.7/-\frac{1}{2},-1.4/-1, -2.1/-\frac{3}{2}}
           \draw[shift={(\x,0)}] (0pt,2pt) -- (0pt,-2pt) node[below] {$\xtext$};
       \foreach \y/\ytext in {0.7/\frac{1}{2},-0.7/-\frac{1}{2}}
             \draw[shift={(0,\y)}] (2pt,0pt) -- (-2pt,0pt) node[left] {$\ytext$};
     \end{tikzpicture}
     \caption{The function $g$} \label{g}
     \end{figure}

We first study how the distribution of the sites evolve when subjected to such dynamics. Naturally, we first ask if the obtained sequence of distributions converges in some sense to a limiting distribution. More precisely, we calculate the pushforward of the initial measure $\mu_0$ by the dynamics to get $\mu_1=(F_{\mu_0})_*\mu_0$, then calculate $\mu_2=(F_{\mu_1})_*\mu_1$ and so on. However, the limit $\lim\limits_{t \to \infty}(F_{\mu_t})_*\mu_t$ (considered in the weak topology) is not likely to exist, typically. Instead we can study the convergence of the time averages
\begin{equation} \label{time}
A(T)=\frac{1}{T}\sum_{t=0}^{T-1}\mu_t.
\end{equation}

The above questions concern the behaviour of the totality of the sites. We are further interested in capturing the phenomenon of synchronization in our coupled map system, that is, the behaviour of the sites with respect to each other. To this end we study special classes of initial distributions, and give some characterization for $\mu_T$, $T$ large. We study the full range $\varepsilon \in [0,1)$ of the coupling parameter, and observe similar behaviour to that of the uncoupled system when $\varepsilon$ is close to zero, and synchronization, when $\varepsilon$ is sufficiently large.

In the first part of this paper we study special singular initial measures, the average of $N$ point masses:
\[
\mu_0=\frac{1}{N}\sum_{i=1}^{N}\delta_{x_i}.
\]
 This is the case of finite system size, each point mass representing the position of a site. By labeling the sites, the coupled map system can be represented as a dynamical system on the $N$-dimensional torus with dynamics
 \begin{equation} \label{mainsing}
 (F_{\varepsilon,N}(x))_s=2\left(x_s+\frac{\varepsilon}{N}\sum_{r=1}^Ng(x_r-x_s) \right), \quad \forall s \in \{1,\dots,N\}, \quad  x=(x_s)_{s=1}^N \in \mathbb{T}^N.
 \end{equation}
 This is a piecewise affine map of the N-dimensional torus: the singularities (arising from the singularities of the map $g$) are the hyperplanes
 \[
 x_r-x_s = \frac{1}{2} \quad \mod 1 \qquad r,s=1,\dots,N, \thinspace (r \neq s),
 \]
 and the linear part of the map (on each domain of continuity) has eigenvalues 2 and $2(1-\varepsilon)$ with multiplicities 1 and $N-1$ respectively. Hence we say that the map is expanding if $0 \leq \varepsilon < \frac{1}{2}$ and contracting if $\frac{1}{2} < \varepsilon < 1$.

 This is in fact the system Fernandez studied in \cite{fernandez2014breaking}. We first show that for all $\varepsilon$ values for which the dynamics is expanding, the time averages of the point masses supported on the trajectory of all typical initial positions tend to uniform distribution.
 \begin{theo} \label{t1}
 Let $0 \leq \varepsilon < \frac{1}{2}$, and let us denote the Lebesgue measure on $\mathbb{T}^N$ as $\lambda^N$. The time averages $A(T)$ defined by \eqref{time} converge to $\lambda=\lambda^1$ for $\lambda^N$- a.e. starting state $x=(x_1,\dots,x_N)$.
 \end{theo}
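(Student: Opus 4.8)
The plan is to pass from the measure-valued time averages to Birkhoff averages on $\mathbb{T}^N$ and to test convergence mode by mode. Writing $\chi_k(x)=e^{2\pi i k x}$, weak-$\ast$ convergence $A(T)\to\lambda$ is equivalent to $\int\chi_k\,dA(T)\to\int\chi_k\,d\lambda$ for every $k\in\mathbb{Z}$, and since $\int\chi_k\,dA(T)=\frac1T\sum_{t=0}^{T-1}\frac1N\sum_{s=1}^N\chi_k(x_s^{(t)})=\frac1T\sum_{t=0}^{T-1}\Psi_k(F_{\varepsilon,N}^t(x))$ with the symmetric observable $\Psi_k(x)=\frac1N\sum_s\chi_k(x_s)$, the theorem reduces to showing that for each $k\neq0$ this Birkhoff average tends to $0=\int\chi_k\,d\lambda$ for $\lambda^N$-a.e.\ $x$. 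Since $F_{\varepsilon,N}$ is piecewise affine and uniformly expanding for $\varepsilon<\frac12$ (its symmetric linear part has all eigenvalues $\geq 2(1-\varepsilon)>1$), I would invoke the ergodic theory of multidimensional piecewise expanding maps to obtain finitely many ergodic absolutely continuous invariant measures whose basins exhaust $\lambda^N$-a.e.\ point, so that the Birkhoff averages converge almost everywhere; it then suffices to identify the limit, i.e.\ to prove that every such invariant measure $\nu$ satisfies $\int\Psi_k\,d\nu=0$ for $k\neq 0$, equivalently that all one-site marginals of $\nu$ are uniform.

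This last point is the heart of the matter, and I would establish it by exploiting the way the coupling interacts with the diagonal $\Delta=\{c\mathbf{1}:c\in\mathbb{T}\}$. The decisive structural fact is that the coupling term depends only on the differences $x_r-x_s$, and that these differences evolve autonomously under $F_{\varepsilon,N}$; consequently $\chi_k((F_{\varepsilon,N}x)_s)=e^{2\pi i\,2k\,x_s}\cdot E(x)$, where $E$ is $\Delta$-invariant. Iterating this identity inside $\int e^{2\pi i k x_s}\,d\nu=\int e^{2\pi i k x_s}\circ F_{\varepsilon,N}^t\,d\nu$ (using invariance of $\nu$) yields, for every $t$, an exact relation $\int e^{2\pi i k x_s}\,d\nu=\int e^{2\pi i\,2^{t}k\,x_s}\,G_t\,d\nu$ with $G_t$ a $\Delta$-invariant function of modulus one. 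I would then disintegrate $\nu$ along the fibration of $\mathbb{T}^N$ by cosets of $\Delta$: since $\nu\ll\lambda^N$, the conditional measures on almost every fiber are absolutely continuous, and as $x_s$ is an affine coordinate along each fiber while $G_t$ is constant there, the fiber integral reduces to a single Fourier coefficient $\widehat{\nu_{\bar x}}(2^{t}k)$ of the (absolutely continuous) conditional measure. By the Riemann--Lebesgue lemma this tends to $0$ as $t\to\infty$ on a.e.\ fiber, and since $|G_t|=1$ the dominated convergence theorem forces the right-hand side to $0$; as the left-hand side is independent of $t$, it must vanish.

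Assembling these pieces, for each $k\neq0$ the set of $x$ for which $\frac1T\sum_t\Psi_k(F_{\varepsilon,N}^t x)\to0$ has full $\lambda^N$-measure, and intersecting over the countably many $k$ gives the claimed weak convergence for a.e.\ starting state. I expect the genuine obstacle to be not the marginal computation above but the input from the theory of piecewise expanding maps: one must verify the Lasota--Yorke/Saussol-type hypotheses (the expansion must dominate the geometric complexity created by the singularity hyperplanes $x_r-x_s=\tfrac12$) and, crucially, that $\lambda^N$-a.e.\ point is generic for one of the ergodic components, uniformly for all $\varepsilon\in[0,\tfrac12)$. This is delicate precisely as $\varepsilon\uparrow\tfrac12$, where the transverse expansion rate $2(1-\varepsilon)$ degenerates to $1$; a careful choice of adapted function space (bounded variation in the sense of Saussol) or a direct analysis of the singularity geometry will be required there, and the disintegration step also relies on the standard but nontrivial fact that absolute continuity passes to conditional measures along a smooth fibration.
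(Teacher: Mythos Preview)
Your proposal is correct and shares the same overall architecture as the paper's proof: both invoke the multidimensional piecewise expanding theory (Saussol, Thomine) to obtain finitely many ergodic absolutely continuous invariant measures whose basins cover $\lambda^N$-a.e.\ point, and both then reduce the theorem to showing that every such ergodic acim has uniform one-dimensional marginals. The difference lies in how this last step is executed. The paper argues that the invariant \emph{density} itself is constant on each circle $x+c\mathbf{1}$: it starts from an auxiliary density that is constant on these fibers, observes that the transfer operator preserves this property (since $F_{\varepsilon,N}$ acts as the doubling map along each fiber and merely permutes fibers), and then uses convergence of Ces\`aro averages under $\mathcal{L}$ to conclude that the limit density inherits the same structure; uniformity of marginals then follows by an easy integration. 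Your route is instead Fourier-analytic: you iterate the invariance relation to write $\chi_k\big((F^t x)_s\big)=e^{2\pi i\,2^t k\,x_s}\cdot G_t$ with $G_t$ constant on fibers, disintegrate $\nu$ along the diagonal fibration, and kill the right-hand side via Riemann--Lebesgue on each (absolutely continuous) conditional measure. Both arguments exploit exactly the same two structural facts---the autonomous evolution of differences and the doubling of the diagonal coordinate---so neither is more general; the paper's version yields the slightly stronger intermediate statement that the invariant density is fiberwise constant, while yours is more self-contained in that it avoids the (mild) appeal to $L^1$-convergence of $\frac1T\sum_{t<T}\mathcal{L}^t f_0$ to the invariant density.

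Your closing caveat about verifying the Saussol/Thomine hypotheses near $\varepsilon\uparrow\frac12$ is a fair point; the paper simply cites \cite{thomine2010spectral} and \cite{saussol2000absolutely} without elaboration. In practice the geometric complexity of the singularity pattern (the hyperplanes $x_r-x_s=\frac12$) is fixed while the expansion rate is $2(1-\varepsilon)>1$, so the required inequality holds for each fixed $\varepsilon<\frac12$, but you are right that this deserves at least a sentence of justification rather than the blanket citation the paper gives.
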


 \begin{con}
 The statement of Theorem \ref{t1} also holds for all $\frac{1}{2} < \varepsilon \leq 1$.
 \end{con}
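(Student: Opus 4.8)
The plan is to exploit a skew-product structure of $F_{\varepsilon,N}$ that becomes transparent in the contracting regime. First I would record the diagonal equivariance of \eqref{mainsing}: since $g$ depends only on differences, adding a constant $a$ to every coordinate gives $F_{\varepsilon,N}(x+a\mathbf 1)=F_{\varepsilon,N}(x)+2a\,\mathbf 1$, where $\mathbf 1=(1,\dots,1)$. Passing to difference coordinates $w_s=x_s-x_1$ ($s=2,\dots,N$) together with a collective coordinate $c=x_1$, the map becomes a skew product: the differences obey a self-contained base dynamics $w\mapsto G(w)$ on $\mathbb{T}^{N-1}$, while the fiber evolves by $c\mapsto 2c+h(w)$ with $h$ piecewise affine. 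By the eigenvalue computation preceding \eqref{mainsing}, the sum-zero directions carry the eigenvalue $2(1-\varepsilon)$, so for $\tfrac12<\varepsilon<1$ the base map $G$ is a piecewise affine contraction with rate $\rho=2(1-\varepsilon)<1$, whereas the fiber keeps the expanding eigenvalue $2$.

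Next I would reduce the weak convergence $A(T)\to\lambda$ to Weyl sums. Testing against the Fourier modes $\phi_k(x)=e^{2\pi i kx}$, $k\neq0$, and writing $x_s=c+w_s$ (with $w_1=0$), one obtains the factorization
\[
\int \phi_k\,\mathrm dA(T)=\frac1T\sum_{t=0}^{T-1}\Big(\tfrac1N\sum_{s=1}^N e^{2\pi i k\,x_s(t)}\Big)=\frac1T\sum_{t=0}^{T-1}e^{2\pi i k\,c_t}\cdot\Big(\tfrac1N\sum_{s=1}^N e^{2\pi i k\,w_s(t)}\Big).
\]
The second factor is bounded by $1$ and depends only on the base orbit, while $\int_{\mathbb T}e^{2\pi i kc}\,\mathrm dc=0$. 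This is the structural reason the uniform limit should persist in the contracting regime: however the sites synchronize, the collective coordinate is still driven by a doubling map, and any configuration of differences is averaged out by integration over $c$. Concretely, it suffices to show that for $\lambda^N$-a.e.\ starting state the fiber orbit $c_t$ equidistributes on $\mathbb T$ (that is, $\tfrac1T\sum_t e^{2\pi i m c_t}\to0$ for every $m\neq0$) while the base factor converges, so that the product averages to zero.

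The crux is thus the fiber equidistribution. Suppose first that for a.e.\ $w_0$ the base orbit converges to a periodic orbit $w^{(0)},\dots,w^{(p-1)}$ of $G$ (a fixed point when $p=1$); this is the expected generic behaviour of a contraction, and for $N=3$ it matches the two limit behaviours of the contracting regime recalled in the Introduction. Solving the fiber recursion gives $c_t=2^t\big(c_0+\beta_t\big)\bmod 1$ with $\beta_t=\tfrac12\sum_{j=0}^{t-1}2^{-j}h(w_j)$; since $h$ is bounded the partial sums converge, $|\beta_t-\beta_\infty|\le C\,2^{-t}$, so $c_t=2^t(c_0+\beta_\infty)+\delta_t$ with $\delta_t$ bounded and, using $|h(w_j)-h(w^{(j\bmod p)})|\le C'\rho^{\,j}$, convergent along each residue class modulo $p$. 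I would then invoke the classical fact that the doubling orbit $2^t\alpha\bmod1$ equidistributes for a.e.\ $\alpha$: as $\alpha_\infty=c_0+\beta_\infty$ ranges over a full-measure set when $c_0$ does, a summation-by-parts (or dominated convergence) argument against the convergent factor $e^{2\pi i m\delta_t}$ shows $\tfrac1T\sum_t e^{2\pi i mc_t}\to0$. Combined with convergence of the base factor along each residue class, this yields $\int\phi_k\,\mathrm dA(T)\to0$ for all $k\neq0$, hence $A(T)\to\lambda$ for $\lambda^N$-a.e.\ $x$.

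The main obstacle is to control the base contraction $G$ for all $N$ and all $\tfrac12<\varepsilon<1$, which is precisely where the statement becomes only conjectural. A piecewise affine contraction on $\mathbb{T}^{N-1}$ need not attract its typical orbit to a periodic orbit: it may possess Cantor attractors or minimal sets carrying quasiperiodic dynamics, and its basins may be measure-theoretically intricate. Establishing that $\lambda^{N-1}$-a.e.\ base orbit is attracted to a well-behaved (ideally periodic) set, and that a.e.\ orbit avoids the singularity hyperplanes $x_r-x_s=\tfrac12$, is the hard part for general $N$; the geometric complications already noted by Fernandez for $N\ge4$ enter exactly here. Even granting a tame attractor, if it carries non-periodic dynamics one would have to replace the perturbed-doubling argument by an equidistribution statement for the fiber cocycle over a general minimal base, which typically requires additional Diophantine or cohomological input. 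The endpoint $\varepsilon=1$ is degenerate (the contraction rate is $0$, so $G$ collapses each continuity domain to a point and every orbit is eventually periodic) and should be handled separately, but by the same wash-out mechanism.
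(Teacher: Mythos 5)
This statement is a \emph{conjecture} in the paper: the authors give no proof for $\frac12<\varepsilon\le 1$ (their proof of Theorem \ref{t1} rests on Thomine's spectral theory for piecewise \emph{expanding} maps and genuinely stops at $\varepsilon<\frac12$), so there is no paper proof to compare yours against. That said, your structural reductions are sound and closely mirror the paper's own factor construction $G_{\varepsilon,N}$: the differences evolve autonomously with contraction rate $2(1-\varepsilon)$, the collective coordinate is driven by the doubling map, and your Weyl-sum factorization together with the explicit fiber solution $c_t=2^t(c_0+\beta_t)$, $|\beta_t-\beta_\infty|\le C2^{-t}$, correctly yields $A(T)\to\lambda$ for a.e.\ $c_0$ \emph{provided} the base orbit is asymptotically periodic with limit cycle avoiding the singularity hyperplanes (you need this both for the estimate $|h(w_j)-h(w^{(j\bmod p)})|\le C'\rho^j$ and for $\delta_t$ to converge along residue classes; note also that passing from a.e.\ $(c_0,w)$ to $\lambda^N$-a.e.\ $x$ is immediate by Fubini since the coordinate change is a measure-preserving linear isomorphism). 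Your mechanism is exactly the one the paper verifies by hand in the cases it can settle: for $N=2$ and $N=3$ in the contracting regime the Milnor attractor is identified as circles on which the dynamics is the doubling map, and the ``wash-out'' of the difference configuration by the equidistributing collective coordinate is precisely what Discussions \ref{d2} and \ref{d3} describe.

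The genuine gap is the one you name yourself, and it is not a technicality but the entire content of the conjecture: you assume that for $\lambda^{N-1}$-a.e.\ initial difference configuration the piecewise affine contraction $G$ is asymptotically periodic. No such result is available uniformly in $N$ and $\varepsilon\in(\frac12,1)$; piecewise contractions can carry Cantor or minimal attractors with non-periodic dynamics, the known asymptotic-periodicity results are only generic in parameters, and one must additionally rule out that a positive-measure set of orbits accumulates on the singularity hyperplanes $x_r-x_s=\frac12$, where $h$ is discontinuous and your geometric-decay estimates break down. In the non-periodic case your fallback --- equidistribution of the fiber cocycle $c\mapsto 2c+h(w)$ over a general minimal base --- would itself require new cohomological or Diophantine input, as you note. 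So what you have is a correct and well-aligned \emph{reduction} of the conjecture to an open lemma about the base contraction (plus a complete argument in the asymptotically periodic regime, covering $N=2,3$ and the degenerate endpoint $\varepsilon=1$), not a proof; this is consistent with the statement's status in the paper.
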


 We study in detail the cases of $N=2$ and $N=3$. We aim to characterize the limit behaviour of these systems by giving a geometric description of their attractors. The suitable notion of attractor (following Fernandez) is the Milnor attractor: the smallest closed set which attracts the orbit of every initial condition up to a zero Lebesgue measure set. For further reference, see \cite{buescu2012exotic} and \cite{milnor1985concept}.

 For the case of two sites, we prove the following result.

 \begin{theo} \label{t2} Let us consider the dynamical system $(F_{\varepsilon,2},\mathbb{T}^2)$.
 \begin{enumerate}[(A)]
 \item Let $0 \leq \varepsilon < 1-\frac{\sqrt{2}}{2}$. The system has a unique mixing absolutely continuous invariant measure.
 \item Let $1-\frac{\sqrt{2}}{2} \leq \varepsilon < \frac{1}{2}$. The system has a unique ergodic absolutely continuous invariant measure, which is not mixing.
 \item Let $\frac{1}{2} < \varepsilon < 1$. The Milnor attractor of the system is a circle on $\mathbb{T}^2$, on which the dynamics acts as the doubling map.
 \end{enumerate}
 \end{theo}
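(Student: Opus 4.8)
The plan is to pass to the coordinates $u=x_1+x_2$ and $v=x_2-x_1$ on $\mathbb{T}^2$, in which the dynamics \eqref{mainsing} essentially decouples. A direct computation gives $u\mapsto 2u \bmod 1$ (the pure doubling map, independent of $v$) together with $v\mapsto 2v-2\varepsilon g(v)\bmod 1$, which on the set $|v|<\tfrac12$ where $g(v)=v$ reduces to the linear map $v\mapsto\beta v\bmod 1$ with $\beta:=2(1-\varepsilon)$. The eigenvalues $2$ and $\beta$ recover exactly the expanding/contracting transition at $\varepsilon=\tfrac12$. The one subtlety I would record once and reuse is that $(x_1,x_2)\mapsto(u,v)$ is a two-to-one self-covering of $\mathbb{T}^2$, with deck transformation the translation by $(\tfrac12,\tfrac12)$, so assertions about the factor map $\bar F(u,v)=(2u,\,T_\beta v)$, where $T_\beta(v)=\beta v\bmod 1$, must be lifted with care.

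Part (C) is the easy case and I would treat it first. For $\tfrac12<\varepsilon<1$ we have $\beta<1$, and since then $\beta|v|<|v|<\tfrac12$ no wrapping can occur, so for every starting point off the measure-zero singular set $\{v=\tfrac12\}$ the difference contracts geometrically, $v_t=\beta^t v_0\to0$. Hence $\lambda^2$-almost every orbit approaches the diagonal circle $\Delta=\{x_1=x_2\}=\{v=0\}$, on which $g\equiv0$ and $F$ acts as $x\mapsto2x$. Since the doubling map has dense orbits for Lebesgue-a.e.\ point of $\Delta$, the $\omega$-limit set of a.e.\ point of $\mathbb{T}^2$ is all of $\Delta$, and no smaller closed set can attract a.e.\ orbit; thus the Milnor attractor is exactly $\Delta$ with doubling dynamics.

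For (A) and (B) the map is piecewise affine and expanding ($\beta>1$). Existence, uniqueness and ergodicity of the acip throughout the range I would obtain by combining the reduction with the standard Lasota--Yorke/bounded-variation theory: the factor $\bar F$ has a unique ergodic acip (Lebesgue in the sum coordinate, and the acip of the full-branch expanding map $T_\beta$ in the difference coordinate), and I would lift uniqueness and ergodicity through the two-to-one covering, using the BV transfer operator for the two-dimensional map to guarantee genuine absolute continuity and finiteness of the ergodic components. The mixing dichotomy is the heart of the statement. The sum direction always mixes, so the remaining behaviour is governed by the difference coordinate and its interplay with the covering, and the threshold should be located by tracking the forward orbit of the discontinuity set $\{v=\pm\tfrac12\}$ of $T_\beta$. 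I would compute that at $\beta=\sqrt2$, i.e.\ $\varepsilon=1-\tfrac{\sqrt2}{2}$, this orbit becomes eventually periodic, landing on the $2$-cycle $\{\pm(\sqrt2-1)\}$ (one checks $\tfrac12\mapsto\tfrac{\sqrt2}{2}-1\mapsto-(\sqrt2-1)\mapsto\sqrt2-1\mapsto\cdots$). This yields a finite Markov partition whose transition structure carries a period-two cyclicity, which I would use to exhibit the loss of mixing in (B); for $\beta>\sqrt2$ the orbit is aperiodic and I would establish a spectral gap and decay of correlations, giving mixing in (A).

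The main obstacle is precisely this last step: proving the \emph{loss} of mixing in regime (B) and pinning the threshold at $\beta=\sqrt2$. Establishing decay of correlations for $\beta>\sqrt2$ is comparatively routine once the Markov/BV machinery is in place, but identifying the obstruction for $\beta\le\sqrt2$ is delicate, because the period-two structure of the critical orbit lives in the difference direction together with the sheet structure of the covering and must be shown to survive as a genuine obstruction to mixing of the full two-dimensional system rather than being absorbed into the mixing of the sum coordinate. Disentangling these two directions and turning the periodic critical orbit into a rigorous spectral or geometric statement is where I expect the real work to lie.
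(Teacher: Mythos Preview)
Your reduction to the $(u,v)$ coordinates and your treatment of Part~(C) are correct and coincide with the paper's approach. The genuine gap is in your handling of the $v$-map and, consequently, of the mixing dichotomy in Parts~(A)--(B).

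The map $v\mapsto 2v-2\varepsilon g(v)\bmod 1$ is \emph{not} a full-branch expanding map, and in particular it is not the $\beta$-transformation $T_\beta$. In the representation $v\in[0,1)$ the two branches are $v\mapsto\beta v$ on $[0,\tfrac12)$ and $v\mapsto\beta v+(1-\beta)$ on $(\tfrac12,1]$, with images $[0,1-\varepsilon)$ and $(\varepsilon,1]$; neither branch is onto. This is a (centrally symmetric) Lorenz map, and the paper first restricts it to the forward-invariant interval $[\varepsilon,1-\varepsilon]$ (outside of which every orbit eventually leaves) and rescales. Your appeal to ``the acip of the full-branch expanding map $T_\beta$'' therefore does not apply; the relevant invariant measure is the Lorenz-map acip, and its structure is what drives the dichotomy.

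More seriously, your strategy for non-mixing only addresses the single parameter $\beta=\sqrt2$, where the critical orbit is eventually periodic and you get a Markov partition with a $2$-cycle. But regime~(B) is the whole interval $1<\beta\le\sqrt2$, and for generic $\beta$ in this range the critical orbit is \emph{not} eventually periodic, so no finite Markov partition is available. The paper handles this uniformly via Parry's renormalization theorem for symmetric Lorenz maps: the map is $n$ times renormalizable precisely when $2^{1/2^{n+1}}<\beta<2^{1/2^{n}}$, and renormalizability immediately precludes mixing (the acip has $2^n$ cyclically permuted components). Glendinning--Sparrow then gives mixing for the non-renormalizable range $\beta>\sqrt2$. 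Your remark that ``for $\beta>\sqrt2$ the orbit is aperiodic'' is also not the right criterion: mixing does not hinge on aperiodicity of the critical orbit but on non-renormalizability. In short, the missing idea is to recognise the $v$-map as a Lorenz map and invoke its renormalization theory; this replaces the ad hoc critical-orbit analysis and covers all of regime~(B) at once.
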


\begin{disc} \label{d2}
\textbf{Case (B) of Theorem \ref{t2}.} This concerns non-mixing behaviour in the expanding regime, for which we can provide a rather explicit description. For each
$\varepsilon \geq 1-\frac{\sqrt{2}}{2}$ there exists a $K=K(\eps)\geq 2$ such that $F_{\varepsilon,2}$ has a factor which is a cycle of period $K$. That is, there exist sets of positive measure $A_1,A_2,...A_{K-1}$ such that $FA_i=A_{i+1}; (i=1,...,K-2)$, $FA_{K-1}=A_1$ and $F^K$ restricted to each of the $A_i$ is mixing (here $F=F_{\varepsilon,2}$ for brevity).  The dependence of $K$ on $\eps$ is rather explicit, for details we refer to section~\ref{3.1}. This corresponds to the following behaviour: the two sites occupy an almost \textit{opposite position} on the circle, that is, they are roughly a semicircle apart. In course of a $K$ iterations, the relative position of the sites keeps changing, and then they jump back to the original -- almost opposite -- position. Meanwhile the sum of the coordinates evolves according to the doubling map -- this indicates that the configuration, as a whole, behaves chaotically. The larger $\eps$ is, the larger $K$ becomes, and the closer the sites remain to an exact opposite configuration throughout. In the limit as $\eps\to\frac12$ from below, we have $K\to\infty$, while the relative position of the sites converges to exact opposite locations.

\textbf{Case (C) of Theorem \ref{t2}.} As the last point of the theorem, we actually show that when $\frac{1}{2} < \varepsilon < 1$, the diagonal of $\mathbb{T}^2$ (viewed as the unit square with sides identified) attracts every trajectory, hence the two sites asymptotically synchronize in such a way that their \textit{positions coincide}. This is in contrast to the behaviour in case (B). On the other hand the point in which the sites are synchronized moves chaotically on $\mathbb{T}$.

\end{disc}

 In the case of $N=3$, we prove the following statement. Throughout, by ``ergodic component'' we mean (the support of) an ergodic absolutely continuous invariant measure.

 \begin{theo} \label{t3} Let us consider the dynamical system $(F_{\varepsilon,3},\mathbb{T}^3)$.
 \begin{enumerate}[(A)]
  \item Let $0 \leq \varepsilon < 1-\frac{\sqrt{2}}{2}$. The system has a unique ergodic absolutely continuous invariant measure.
  \item If $\frac{4-\sqrt{10}}{2} \leq \varepsilon < \frac{1}{2}$, there are at least six ergodic components.
  \item If $\frac{1}{2} < \varepsilon < 1$, the Milnor attractor is the union of three circles on $\mathbb{T}^3$, one is invariant and the other two map onto each other.
  \end{enumerate}
 \end{theo}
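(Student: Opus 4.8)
The plan is to reduce all three statements to the study of a single two–dimensional piecewise affine map, and then treat the three coupling regimes separately (in order of increasing difficulty: (A), then (C), then (B)). The reduction: pass to the relative coordinates $(u,v)=(x_2-x_1,x_3-x_1)$. Since $g$ is odd, $\sum_{s}\sum_{r}g(x_r-x_s)=0$, so the diagonal direction is an autonomous doubling factor; the projection $\pi:\mathbb{T}^3\to\mathbb{T}^2$, $x\mapsto(u,v)$, conjugates $F_{\eps,3}$ to an autonomous \emph{reduced map} $\bar F$ on $\mathbb{T}^2$, and $F_{\eps,3}$ is a skew product over $\bar F$ whose fibres are the diagonal circles, on which $F_{\eps,3}$ acts as an affine doubling. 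On each domain of continuity the linear part of $\bar F$ is the scalar $\lambda I$ with $\lambda=2(1-\eps)$; the singularities are the three circles $\{u=\tfrac12\}$, $\{v=\tfrac12\}$, $\{u-v=\tfrac12\}$, which cut $\mathbb{T}^2$ into a central hexagon $R_0$ (translation $0$) and two triangles $T_1,T_2$ (translations $\pm\tfrac{2\eps}{3}(1,-1)$); crucially, all translations are parallel to $(1,-1)$. Because the fibre doubling is exact, the ergodic a.c.i.m.'s of $F_{\eps,3}$ are in bijection with those of $\bar F$, and the Milnor attractor of $F_{\eps,3}$ is the union of the $\pi$-fibres over the attractor of $\bar F$. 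This turns (A), (B), (C) into statements about $\bar F$.

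For (A), the condition $\eps<1-\tfrac{\sqrt2}{2}$ is exactly $\lambda>\sqrt2$, and $\bar F$ is a two–dimensional piecewise expanding map with codimension-one discontinuities. I would invoke the multidimensional Lasota--Yorke inequality (Saussol/Cowieson/Tsujii), whose applicability for codimension-one boundaries requires precisely $\lambda>\sqrt2$, to obtain a quasi-compact transfer operator and hence finitely many ergodic a.c.i.m. To upgrade to uniqueness I would show that for $\lambda>\sqrt2$ the map is topologically exact — the $\lambda$-fold expansion combined with folding across the three singularity circles makes the iterated image of any ball eventually equal to $\mathbb{T}^2$ — so the (open mod zero) supports of two a.c.i.m.'s must overlap. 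Tensoring with the doubling factor then gives the unique ergodic a.c.i.m. of $F_{\eps,3}$.

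For (C), $\eps>\tfrac12$ means $\lambda<1$ and $\bar F$ is a piecewise contraction. The decisive structural point is that all translations are parallel to $(1,-1)$ and $|\lambda q|<1$: in the coordinate $q=u+v$ no folding occurs, so $q\mapsto\lambda q$ and $q\to0$ along every orbit, while in $p=u-v$ one is left with the one–dimensional piecewise contraction $\psi(p)=\lambda p+\tau(p)$, $\tau\in\{0,\pm\tfrac{4\eps}{3}\}$, which has the attracting fixed point $p=0$ and the attracting $2$-cycle $\{\pm\tfrac23\}$. I would show these attract Lebesgue-a.e.\ $p$ — each orbit either enters the central branch and converges to $0$, or remains in the outer branches and converges to the $2$-cycle — the only delicate point being to exclude an exotic Cantor attractor of $\psi$. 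Hence the attractor of $\bar F$ is $\{O,P_+,P_-\}$, with $O=(0,0)$ fixed and $P_\pm=(\pm\tfrac23,0)$ (in $(p,q)$) a $2$-cycle; fibring over the diagonal yields the three circles, the diagonal being invariant and the two equally-spaced configurations being exchanged, with $F_{\eps,3}$ acting as doubling on each.

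For (B), $\eps\ge\tfrac{4-\sqrt{10}}{2}$ is $1<\lambda\le\sqrt{10}-2<\sqrt2$, where the expansion is too weak for the general theory and the mechanism is instead the persistence of forward-invariant regions created by the folding. I would construct one explicit polygon $R\subset\mathbb{T}^2$, bounded by arcs of the singularity circles and their first images and symmetric under the transposition of two of the sites, and verify $\bar F(R)\subseteq R$; the binding boundary inequality is quadratic and reduces to $\lambda^2+4\lambda-6\le0$, i.e.\ $\lambda\le\sqrt{10}-2$, which pins down the threshold. The order-$12$ symmetry group $S_3\times\mathbb{Z}_2$, acting on the relative coordinates as the dihedral group $D_6$, then carries $R$ onto six pairwise essentially disjoint forward-invariant regions — the six ways of distinguishing a pair of sites together with an orientation, so that both the permutation and the inversion symmetry are broken. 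On each region $\bar F$ supports an ergodic a.c.i.m., yielding at least six mutually singular ergodic components, hence six for $F_{\eps,3}$. The hard part is exactly this last case: exhibiting the correct invariant polygon and carrying out the boundary estimate that produces the sharp value $\sqrt{10}-2$, verifying genuine disjointness of the six $D_6$-images, and — since here $\lambda<\sqrt2$ so Saussol's theorem does not apply — establishing existence of an a.c.i.m.\ on each region from the explicit finite-branch (Markov) structure of $\bar F$ rather than from the general multidimensional theory.
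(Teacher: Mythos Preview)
Your reduction to a two–dimensional factor $\bar F$ and the overall strategy for all three parts are sound and match the paper closely. In particular, your plan for (B) --- construct one explicit forward–invariant polygon, transport it by the $D_6$ symmetry, and check the boundary inequality that gives $\lambda\le\sqrt{10}-2$ --- is exactly what the paper does (the paper writes out the six domains $I,\dots,VI$ and checks where the corners of each piece map). For (A), your idea of proving topological exactness when $\lambda>\sqrt2$ is the right one; the paper carries this out via an area–growth argument (a set cut by at most one singularity still grows by a factor $\ge\lambda^2/2>1$, so eventually it must meet two singularities and hence the diagonal $u=v$, after which a one–dimensional l.e.o.\ result of Fernandez finishes the job). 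Your sketch is vaguer on this last step, and the remark that Saussol's Lasota--Yorke inequality ``requires precisely $\lambda>\sqrt2$'' is not accurate --- the paper uses Thomine/Saussol for the full expanding range $\eps<\tfrac12$, and the $\sqrt2$ threshold enters only through the l.e.o.\ argument.

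There is, however, a genuine computational error that breaks your argument for (C). In your coordinates $(u,v)=(x_2-x_1,x_3-x_1)$, the translations on the two triangles are $(\tfrac{2\eps}{3},\tfrac{4\eps}{3})$ and $(\tfrac{4\eps}{3},\tfrac{2\eps}{3})$, not $\pm\tfrac{2\eps}{3}(1,-1)$. (Check directly: on the triangle with $0<u<\tfrac12<v<1$ and $v-u<\tfrac12$, one has $g(u)=u$, $g(v)=v-1$, $g(v-u)=v-u$, giving $u'=\lambda u+\tfrac{2\eps}{3}$, $v'=\lambda v+\tfrac{4\eps}{3}$.) Consequently neither $q=u+v$ nor $p=u-v$ evolves autonomously: in $q$ the translation jumps from $0$ on the hexagon to $2\eps$ on both triangles, and the singularity lines $u=\tfrac12$, $v=\tfrac12$ depend on both $p$ and $q$. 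So the claimed decoupling into a pure contraction $q\mapsto\lambda q$ and a one–dimensional piecewise contraction $\psi(p)$ does not hold, and the derivation of the attractor $\{O,P_+,P_-\}$ via $\psi$ collapses. (The location of the $2$-cycle you state, $(p,q)=(\pm\tfrac23,0)$, happens to be correct, but this is not recovered by your $\psi$ with $\tau=\pm\tfrac{4\eps}{3}$.) The paper instead analyses the genuine two–dimensional piecewise contraction $\bar F$ directly: on the hexagon the map is $\lambda\cdot\mathrm{id}$ with attracting fixed point $0$; on the two triangles the branches compose to produce the attracting $2$-cycle $\{(\tfrac13,\tfrac23),(\tfrac23,\tfrac13)\}$ (independent of $\eps$), and one checks that every orbit is eventually captured by one of these. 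Lifting these three points along the diagonal fibres gives the three circles.
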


\begin{con}
We conjecture that there is a unique ergodic absolutely continuous invariant measure for $\eps< \frac{4-\sqrt{10}}{2}$, and that the number of ergodic components is precisely six for $\frac{4-\sqrt{10}}{2}\le \eps<\frac12$.
\end{con}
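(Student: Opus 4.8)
The plan is to first reduce both assertions to a two-dimensional problem. Writing $S=x_1+x_2+x_3$ and using that $g$ is odd, one checks that $\sum_s(F_{\varepsilon,3}(x))_s=2S \bmod 1$ with no dependence on the configuration, while the pairwise differences $x_r-x_s$ evolve by a map that itself depends only on the differences. Thus $F_{\varepsilon,3}$ is, up to a finite-to-one coordinate change, a product of the doubling map on the mean coordinate $S$ and an autonomous piecewise-affine map $\bar F$ on the difference torus $\mathbb{T}^2$, whose linear part on each domain of continuity is the scalar expansion $2(1-\varepsilon)\,\mathrm{Id}$. Since the doubling map is exact with Lebesgue as its unique a.c.i.m., one checks that the ergodic a.c.i.m.'s of $F_{\varepsilon,3}$ are in bijection with those of $\bar F$. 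Hence both claims reduce to counting ergodic a.c.i.m.'s of the single conformal expanding map $\bar F$: one for $\varepsilon<\frac{4-\sqrt{10}}{2}$, and exactly six for $\frac{4-\sqrt{10}}{2}\le\varepsilon<\frac12$.

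Next I would set up the general finiteness framework. For $\varepsilon<\frac12$ the factor $2(1-\varepsilon)>1$, and the singularity set of $\bar F$ is the image in $\mathbb{T}^2$ of the hyperplanes $x_r-x_s=\frac12$, a finite arrangement of lines. One can then invoke the multidimensional bounded-variation / quasi-compactness theory for piecewise expanding maps (Saussol, Tsujii, Buzzi), after verifying the geometric hypothesis that relates the expansion rate $2(1-\varepsilon)$ to the complexity of this line arrangement. This yields a quasi-compact transfer operator and the qualitative backbone: there are only finitely many ergodic a.c.i.m.'s, each supported on a finite union of regions that are cyclically permuted by $\bar F$, with an exact power on each. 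Finiteness alone, however, does not pin down the count; the quantitative information must come from the geometry of the dynamics.

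The two bounds would then be handled through transitivity and covering arguments built around the six candidate regions $R_1,\dots,R_6$. These are the configurations distinguished by the cyclic order of the three sites together with which pair sits closest to an antipodal position, matching the broken inversion and permutation symmetries, and the value $\frac{4-\sqrt{10}}{2}$ is precisely the threshold at which each $R_i$ becomes forward-invariant under $\bar F$. For the uniqueness claim I would show that when $\varepsilon<\frac{4-\sqrt{10}}{2}$ the regions fail to be invariant: the image $\bar F(R_i)$ overflows into its neighbours, so that any forward-invariant set of positive measure necessarily spreads across all $R_i$ and hence has full support; combined with quasi-compactness this forces a single ergodic component. For the exact-six claim, the ``at least six'' half is Theorem \ref{t3}(B), so it remains to prove an upper bound, namely that for $\frac{4-\sqrt{10}}{2}\le\varepsilon<\frac12$ the six regions exhaust $\mathbb{T}^2$ up to a null set (ruling out components outside $\bigcup_i R_i$) and that $\bar F$ restricted to each $R_i$ is internally transitive (giving a unique a.c.i.m. on each), which together yield exactly six.

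The hard part will be precisely these covering and internal-transitivity statements, which is why the assertion is stated only as a conjecture. Tracking the images of a piecewise-affine map through the singular line arrangement over many iterates produces a rapidly growing combinatorics of pieces, and proving an \emph{exact} covering --- with no leftover positive-measure invariant set hiding along the singular lines, and with genuine transitivity inside each $R_i$ right up to the threshold $\varepsilon=\frac{4-\sqrt{10}}{2}$ --- demands tight, possibly computer-assisted, control. In particular, closing the gap between ``at least six'' and ``exactly six'' requires excluding any further ergodic component, something the bounded-variation machinery guarantees to be finite in number but does not locate; it is this exclusion, together with transitivity at the sharp parameter value, that constitutes the main obstacle.
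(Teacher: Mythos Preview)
The statement in question is a \emph{conjecture}; the paper offers no proof of it. What the paper does prove is the weaker Theorem~\ref{t3}: uniqueness for $\varepsilon<1-\frac{\sqrt{2}}{2}$ (via a locally-eventually-onto argument that needs $[2(1-\varepsilon)]^2>2$) and the existence of at least six components for $\varepsilon\ge\frac{4-\sqrt{10}}{2}$ (by explicitly checking that the six domains $I,\dots,VI$ of figure~\ref{figd3} are forward-invariant). The sharper statement is then left open, supported only by simulations.

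Your proposal is therefore not being compared against a proof but against an absence of one, and you clearly recognise this: your last two paragraphs are an honest explanation of why the statement is conjectural rather than a demonstration that it holds. The reduction you describe---factoring out the mean coordinate and working with the conformal piecewise-affine map on the difference torus (the paper's $G^2_{\varepsilon,3}$)---is exactly the device the paper uses for Theorem~\ref{t3}, and your identification of the six regions and of $\frac{4-\sqrt{10}}{2}$ as the invariance threshold matches the paper's analysis. One small correction: the six regions $R_i$ do not exhaust $\mathbb{T}^2$ up to a null set; the paper shows the Milnor attractor lives inside the two hexagons of figure~\ref{figd2}, and the $R_i$ partition only these hexagons, so the ``exhaustion'' step must also account for the basin outside them.

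The genuine obstruction is precisely the one you name. Extending uniqueness from $1-\frac{\sqrt{2}}{2}$ to $\frac{4-\sqrt{10}}{2}$ cannot use the paper's l.e.o.\ argument (the growth-versus-cutting inequality fails once $2(1-\varepsilon)\le\sqrt{2}$), and your proposed ``overflow'' mechanism---that $\bar F(R_i)$ leaks into neighbouring regions---is plausible but is not yet a proof: leakage does not by itself preclude a smaller forward-invariant set of positive measure sitting inside some $R_i$. Likewise, establishing transitivity of $\bar F$ restricted to each $R_i$ across the whole range $[\tfrac{4-\sqrt{10}}{2},\tfrac12)$ is genuinely open. In short, your document is an accurate roadmap of the difficulties rather than a proof, which is consistent with the paper's own position.
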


\begin{rem}
The values $1-\frac{\sqrt{2}}{2}$ and $\frac{4-\sqrt{10}}{2}$ are not new, these were already identified by Fernandez in \cite{fernandez2014breaking}.
In fact, the only new result that we prove is the emergence of six ergodic components at $\frac{4-\sqrt{10}}{2}$, while \cite{fernandez2014breaking}
showed the presence of (at least) two components, and detected the splitting into six components at a strictly higher value. Nonetheless, we obtain our results
with a different representation of the system, which allows for a more direct interpretation of the dynamical phenomena in terms of the behaviour of the sites, see Discussion~\ref{d3} below.
\end{rem}

\begin{figure}
\centering
\begin{tikzpicture}
\draw (-3,2) circle (2cm);
\filldraw (-3+1,2+1.73) circle (0.08cm);
\filldraw (-3+2,2) circle (0.08cm);
\filldraw (-3-1,2-1.73) circle (0.08cm);
\draw (5,2) circle (2cm);
\filldraw (5-2,2) circle (0.08cm);
\filldraw (5-1,2+1.73) circle (0.08cm);
\filldraw (5+1,2-1.73) circle (0.08cm);
\draw (-3,-1) node {Case (1)};
\draw (5,-1) node {Case (2)};
\draw (-3+1.25,2+2.16) node {A};
\draw (-3+2.5,2) node {B};
\draw (-3-1.25,2-2.16) node {C};
\draw (5-2.5,2) node {B};
\draw (5-1.25,2+2.16) node {A};
\draw (5+1.25,2-2.16) node {C};
\end{tikzpicture}
\caption{Case (1) depicts a configuration belonging to an odd component ($I$, $III$ or $V$), Case (2) depicts a configuration belonging to an even component ($II$, $IV$ or $VI$)} \label{korok}
\end{figure}
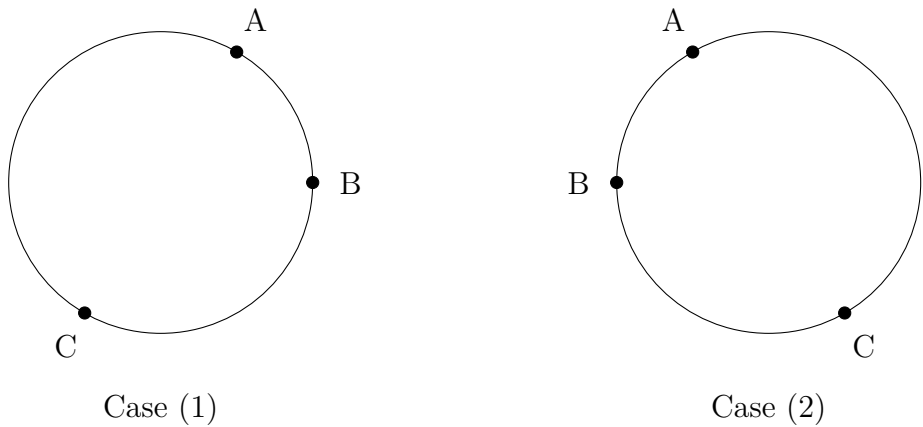

\begin{disc}\label{d3}
Let us point out first that in all cases below the relative position of the three sites is described. The sum of the three coordinates always evolves according to the doubling map on the circle -- indicating that the configuration as a whole behaves chaotically.

\textbf{Case (B) of Theorem \ref{t3}.} Let us denote the six ergodic components by $I, II, III, IV, V$ and $VI$. In fact, these ergodic components consist of
two connected components each, for instance $I=Ia\cup Ib$. Thus altogether $12$ components can be distinguished, all of which correspond to a certain type of
distribution of the sites, which we describe here. Let us consider the locations of the sites as three points on $\mathbb{T}$, which split the circle into three arcs. Irrespective of which site they correspond to, we label the three points by the symbols $A$, $B$ and $C$ according to the following convention: the arc between points
$A$ and $B$ is the shortest, and the one between $A$ and $C$ is the longest of the three arcs (note that for Lebesgue almost every configuration the three lengths are distinct).

Now for odd numbered components ($I, III$ and $V$) the three points $A$, $B$ and $C$ follow each other in a clockwise order, while for even numbered components ($II, IV$ and $VI$) the three points $A$, $B$ and $C$ follow each other in a counterclockwise order on $\mathbb{T}$, see figure \ref{korok}. We may refer to these two configurations as odd and even. These are in one-to-one relation by the reflection about the origin. This provides further evidence that the loss of ergodicity corresponds, in part, to the \textbf{breaking of the inversion symmetry}.

Now, further distinctions among the components correspond to the possible identifications of the three sites $x$, $y$ and $z$ with the points $A$, $B$ and $C$. For instance, if for an odd configuration $y$ is located at $A$, $x$ is located at $B$ and $z$ is located at $C$, a configuration in $Ia$ is obtained. If the sites at points $B$ and $C$ are exchanged -- that is, in this new realization $x, y$ and $z$ are located at $C, A$ and $B$, respectively --  a point in component $Ib$ is obtained. For the same odd configuration, the remaining four permutations map to the realizations in $IIIa, IIIb, Va$ and $Vb$. Thus the different odd components are in one-to-one relation by relabeling the sites (and the same holds for even components, too).  This provides further evidence that the loss of ergodicity corresponds, in part, to the \textbf{breaking of the permutation symmetry}.

Equivalently, the six ergodic components can be described in terms of the quasimetric $d$ on $\mathbb{T}$ such that $d(x,y)$ is the length of the counterclockwise arc from
$x$ to $y$; in particular, the six components correspond to the six possible orders of $d(x,y)$, $d(y,z)$ and $d(z,x)$. For details we refer to section~\ref{3}. We further observe by studying the Milnor  attractor in the expanding case that the sites cannot get closer to each other than $\frac{\varepsilon}{3}$.

\textbf{Case (C) of Theorem \ref{t3}.} In this contracting case ($\frac{1}{2} < \varepsilon < 1$), we show that the states asymptotically achieved by the system are of two types. Either $x=y=z$ on $\mathbb{T}$, that is, the site locations are \textbf{fully synchronized}; or the three points are \textbf{evenly placed} on $\mathbb{T}$, that is, all three distances among the three site locations are $1/3$ on $\mathbb{T}$ . These may be thought of as attractive and repulsive asymptotic states, the basins of attraction for which can be identified. In particular, if the initial distribution of the three sites is sufficiently concentrated on $\mathbb{T}$, then asymptotically there is attractive synchronization, and if the initial distribution is sufficiently even on $\mathbb{T}$, then asymptotically there is repulsive synchronization.
\end{disc}


 The case of larger system size is difficult to analyze due to geometric complexity, but simulations lead us to believe that breaking of ergodicity also occurs in the expanding regime for all $N>3$.

 \begin{con}
 For all $N \in \mathbb{N}$ the system $(F_{\varepsilon,N},\mathbb{T}^N)$ is ergodic if $\varepsilon < \varepsilon_0$, for some $0 < \varepsilon_0=\varepsilon_0(N)$ and has multiple ergodic components when $\varepsilon_1 \leq \varepsilon$ for some $\varepsilon_1=\varepsilon_1(N) < \frac{1}{2}$.
 \end{con}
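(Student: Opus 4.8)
The plan is to reduce, exactly as in the cases $N=2,3$, to the \emph{relative} dynamics. Writing $S=\sum_{s=1}^N x_s$, the antisymmetry $g(-u)=-g(u)$ makes all coupling terms cancel in $\sum_s(F_{\eps,N}(x))_s$, so $S\mapsto 2S$ evolves by the doubling map independently of everything else. Passing to the quotient $\mathbb{T}^N/\langle(1,\dots,1)\rangle\cong\mathbb{T}^{N-1}$ of relative configurations, the induced map $\bar F$ is piecewise affine with conformal linear part $2(1-\eps)\,\mathrm{Id}$ on every domain of continuity, the domains being cut out by the singularity hyperplanes $x_r-x_s=\tfrac12$. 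Since $(F_{\eps,N},\mathbb{T}^N)$ is, up to a finite factor, the product of the mixing doubling map with $(\bar F,\mathbb{T}^{N-1})$, it suffices to prove that $\bar F$ has a unique a.c.i.m.\ in the first regime and several in the second (for the second direction the invariant sectors of $\bar F$ lift to distinct invariant sets of $F_{\eps,N}$, which is all that is needed). The expansion rate of $\bar F$ is $2(1-\eps)$, and $2(1-\eps)>\sqrt2$ precisely when $\eps<1-\tfrac{\sqrt2}{2}$; this is why the same geometric threshold governs both the $N=2$ and $N=3$ statements.

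For the ergodic regime $\eps<\eps_0(N)$ I would argue perturbatively from $\eps=0$, where $\bar F$ is the $(N-1)$-fold product of doubling maps, with a spectral gap on the space of functions of bounded variation and a unique, mixing a.c.i.m. The coupling moves the singularity hyperplanes continuously and keeps the expansion uniform for $\eps$ small, so the stability theory for transfer operators of piecewise expanding maps (of Keller--Liverani type) is expected to preserve the spectral gap, hence the unique mixing a.c.i.m., on an interval $[0,\eps_0(N))$ with $\eps_0(N)>0$. Alternatively, and more in the spirit of this paper, one can seek an explicit $\eps_0(N)$ through a covering argument: sufficiently strong conformal expansion forces the image of every continuity domain to cover $\mathbb{T}^{N-1}$ in boundedly many steps, which yields topological transitivity and, with bounded distortion, uniqueness. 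The threshold produced this way is of the form $1-c_N/2$ for a constant $c_N$ measuring the complexity of the hyperplane arrangement; the perturbative argument guarantees $c_N<2$, i.e.\ $\eps_0(N)>0$, even if the sharp value of $c_N$ is hard to determine.

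For the multiplicity regime $\eps_1(N)\le\eps<\tfrac12$ I would exploit the fixed point of $\bar F$ at the \emph{evenly spaced} configuration $x_s=s/N$: there the coupling vanishes, since $\sum_{k=0}^{N-1} g(k/N)=0$ by the symmetry $k\leftrightarrow N-k$, so this configuration is fixed, and as the eigenvalues are $2(1-\eps)>1$ it is repelling (for $N$ odd it lies in the interior of a continuity domain; for $N$ even its coincidence with a singularity is a technicality to be handled separately). The candidate invariant sets are the open \emph{order-type sectors} emanating from this fixed point, on each of which a fixed strict ordering of the pairwise counterclockwise gaps $d(x_r,x_s)$ holds; for $N=3$ these are the six sectors of Discussion~\ref{d3}, and for general $N$ their number is combinatorial in $N$ and certainly exceeds one. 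Near the fixed point the map acts as the scaling $v\mapsto 2(1-\eps)v$, which sends each cone-shaped sector into itself; the content of the statement is that for $\eps$ close to $\tfrac12$ (weak expansion) this invariance persists \emph{globally}, so $\bar F$ preserves the partition into order types and carries at least that many ergodic a.c.i.m.'s, giving multiple ergodic components of $F_{\eps,N}$ with $\eps_1(N)<\tfrac12$.

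The hard part is this global invariance for large $N$. While the linearization at the evenly spaced point preserves sectors for free, an orbit that leaves the linear neighbourhood may approach the genuine singularities $x_r-x_s=\tfrac12$, where the folding of $g$ can in principle carry it across a sector boundary into a different order type; ruling this out requires quantitative control of how far orbits travel before returning, together with a lower bound on the mutual separation of sites (the analogue of the ``sites cannot get closer than $\eps/3$'' phenomenon of Discussion~\ref{d3}). This is exactly the geometric complexity that already makes $N=4$ delicate, and it is what forces the explicit thresholds $1-\tfrac{\sqrt2}{2}$ and $\tfrac{4-\sqrt{10}}{2}$ to be replaced by implicitly defined $\eps_0(N),\eps_1(N)$. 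A secondary obstacle is to confirm that each invariant sector actually supports an a.c.i.m.\ and that distinct sectors yield genuinely distinct measures rather than one measure spread over a permuted cycle of pieces; for this I would apply the higher-dimensional piecewise expanding theory on each sector separately.
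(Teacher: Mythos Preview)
The statement you are attempting to prove is a \emph{Conjecture}: the paper does not prove it, and explicitly says that ``the case of larger system size is difficult to analyze due to geometric complexity, but simulations lead us to believe that breaking of ergodicity also occurs in the expanding regime for all $N>3$.'' So there is no proof in the paper to compare your proposal against; what you have written is a strategic outline for attacking an open problem, and it should be read as such. Your reduction to the relative dynamics $\bar F$ on $\mathbb{T}^{N-1}$ and the dichotomy ``perturbative/Keller--Liverani for small~$\eps$'' versus ``invariant order-type sectors for $\eps$ near~$\tfrac12$'' is a natural extrapolation of the paper's $N=2,3$ arguments, and you are honest about where the real difficulties lie.

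There is, however, a concrete error in your sector construction. You claim that the evenly spaced configuration $x_s=s/N$ is a \emph{fixed point} of $\bar F$ because the coupling vanishes there. The vanishing of the coupling is correct, but the conclusion is not: with coupling zero the map reduces to $x_s\mapsto 2x_s$, hence every difference $x_r-x_s$ is doubled, and in the relative coordinates $\bar F$ acts as $u\mapsto 2u$ at that point. Thus the evenly spaced configuration lies on a \emph{periodic orbit} of $\bar F$, of period equal to the multiplicative order of $2$ modulo $N$ (for $N$ odd); for $N=3$ this is exactly the period-two orbit $\{(1/3,1/3),(2/3,2/3)\}$ that the paper identifies. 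Your local picture ``$\bar F$ acts as the scaling $v\mapsto 2(1-\eps)v$ near the fixed point, sending each cone into itself'' therefore needs to be replaced by an argument tracking cones along a periodic cycle, which is more delicate since the sectors at the different points of the cycle need to be matched up coherently. This does not kill the approach, but it removes the cleanest part of your heuristic and feeds directly into the global-invariance obstacle you already flagged as the hard part.
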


In the second part of this paper we will deal with \textit{absolutely continuous initial measures}, this corresponds to studying \textit{coupled map systems with a continuum of sites}. If d$\mu=f$d$\lambda$, the dynamics are
\[
F_{f}(x)=2x+2\varepsilon \int_0^1g(y-x)f(y)\text{ d}y \qquad \text{ mod 1}.
\]
Even though the dynamics changes from step to step, it keeps some main characteristic features. We can easily see that our one-step dynamics $F_{f}$ is continuous and homotopic to the doubling map for any probability density $f$. According to Franks \cite{franks1968anosov}, if $\ell$ is a continuous circle map homotopic to the doubling map $d$, then there exists a continuous, onto map $\alpha: \mathbb{T} \to \mathbb{T}$ with
\[
\alpha \circ \ell = d \circ \alpha.
\]
Hence each such map $F_{f}$  is semiconjugate to the doubling map, thus bears a strong similarity to it (for example, $F_{f}$ is always a degree two covering map, always has periodic points of each period etc.)

Now analysis narrows down to the study of the evolution of the densities. We will assume some smoothness for the initial density, typically $C$ or $C^1$ (in the topology of $\mathbb{T}$). For the transfer operator of the dynamics $F_f$ we are going to use the notation $\mathcal{L}_f$.

We first study the case when the \textbf{coupling is weak}, and the sites are initially nearly uniformly distributed. In this case we can show that their \textbf{distribution will tend to uniform} at an exponential rate. This is the same behaviour that the uncoupled system exhibits, since in the case of $\varepsilon=0$, $F_f(x)=2x \mod 1$ for all time steps.

Let us denote the total variation on $\mathbb{T}$ by $|\cdot|_{TV}$.

\begin{theo} \label{t4}
Let $f_0\in C^1(\mathbb{T})$ be the density of the initial measure $\mu_0$ with $|f_0|_{TV} \leq \delta$. Assume that $\varepsilon > 0$ is such that
\[
\varepsilon < \frac{1}{1+4\delta}.
\]
Then the density $f_1$ of the pushforward measure $(F_{f_0})_{\ast}\mu_0$ is in $C^1(\mathbb{T})$ and has total variation
\[
|f_1|_{TV} \leq c |f_0|_{TV}
\]
for some $c < 1$.

This implies that $f_n\to 1$ exponentially in the total variation metric. Hence, asymptotically, the distribution of the sites is uniform on $\mathbb{T}$.
\end{theo}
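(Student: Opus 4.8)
The plan is to recognise $f_1$ as the image of $f_0$ under the Perron--Frobenius (transfer) operator $\mathcal{L}_{f_0}$ of the \emph{single} map $T:=F_{f_0}$ with respect to Lebesgue measure, and then to prove a one-step contraction of the variation seminorm $|\cdot|_{TV}$, which for $C^1$ densities is $\int_{\mathbb{T}}|f'|$.

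First I would pin down the geometry of $T$. Writing $G(x)=\int_0^1 g(y-x)f_0(y)\,dy$ and substituting the $1$-periodic variable $u=y-x$ gives $G(x)=\int_{-1/2}^{1/2}u\,f_0(x+u)\,du$; an integration by parts together with $\int_{\mathbb{T}}f_0=1$ and the periodicity $f_0(x-\tfrac12)=f_0(x+\tfrac12)$ yields the clean formula
\[
T'(x)=2+2\varepsilon\bigl(f_0(x+\tfrac12)-1\bigr)=2(1-\varepsilon)+2\varepsilon f_0(x+\tfrac12).
\]
Since $f_0\ge 0$ we have $T'\ge 2(1-\varepsilon)>0$, so (as already noted via Franks) $T$ is an orientation-preserving degree-two covering of $\mathbb{T}$ with two $C^2$ inverse branches $\psi_1,\psi_2$ whose ranges tile $\mathbb{T}$. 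The identity $f_1(x)=\sum_i f_0(\psi_i(x))\,\psi_i'(x)$ then shows $f_1\in C^1$, using $f_0\in C^1\Rightarrow T\in C^2\Rightarrow\psi_i\in C^2$.

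The heart of the argument is a boundary-term-free variation bound. Writing $\mathcal{L}_{f_0}h=\sum_i (h/T')\circ\psi_i$ and changing variables $y=\psi_i(x)$ on each branch (the Jacobian $\psi_i'$ cancels the factor $T'$ coming from $dx=T'(y)\,dy$), the two branch domains reassemble into all of $\mathbb{T}$ with no boundary contribution, giving
\[
|\mathcal{L}_{f_0}h|_{TV}\le |h/T'|_{TV}.
\]
I would then estimate $|f_0/T'|_{TV}$ by the quotient rule. The decisive point --- and the step I expect to be the main obstacle --- is to bound $T'$ around the value $2$ rather than crudely by $2(1-\varepsilon)$: since $\int_{\mathbb{T}}f_0=1$ on the circle forces $\|f_0-1\|_\infty\le|f_0|_{TV}\le\delta$, one gets $2(1-\varepsilon\delta)\le T'\le 2(1+\varepsilon\delta)$, $\|f_0\|_\infty\le 1+\delta$, and $|T'|_{TV}=2\varepsilon|f_0|_{TV}$. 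Feeding these into $\int|(f_0/T')'|\le\int |f_0'|/T'+\int f_0|T''|/T'^2$ gives
\[
|f_1|_{TV}\le\frac{|f_0|_{TV}}{2(1-\varepsilon\delta)}+\frac{\varepsilon(1+\delta)|f_0|_{TV}}{2(1-\varepsilon\delta)^2}=\frac{1+\varepsilon}{2(1-\varepsilon\delta)^2}\,|f_0|_{TV},
\]
after the algebraic simplification $(1-\varepsilon\delta)+\varepsilon(1+\delta)=1+\varepsilon$. Setting $c:=\frac{1+\varepsilon}{2(1-\varepsilon\delta)^2}$, the hypothesis $\varepsilon<\frac{1}{1+4\delta}$ is exactly what is needed: it gives $\varepsilon+4\varepsilon\delta<1$, hence $1-\varepsilon-4\varepsilon\delta+2\varepsilon^2\delta^2>0$, i.e. $1+\varepsilon<2(1-\varepsilon\delta)^2$, so $c<1$.

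Finally, for the iterated conclusion I would note that $|f_1|_{TV}\le c\delta<\delta$, so the smallness hypothesis is preserved ($\varepsilon<\frac{1}{1+4\delta}\le\frac{1}{1+4|f_1|_{TV}}$) and $f_1\in C^1$; applying the one-step estimate repeatedly with the \emph{same} fixed factor $c$ (legitimate since the variation only decreases) yields $|f_n|_{TV}\le c^n\delta$. Because $\int_{\mathbb{T}}f_n=1$, this forces $\|f_n-1\|_\infty\le|f_n|_{TV}\le c^n\delta\to 0$, i.e. $f_n\to 1$ exponentially fast. The only genuinely delicate points are the derivation of the closed form for $T'$ (handling the jump of $g$ correctly through the periodic substitution) and the recognition that $T'$ must be centred at $2$; once these are in place the estimate is a short computation.
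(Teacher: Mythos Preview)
Your proof is correct and follows essentially the same route as the paper: both compute $T'(x)=2(1-\varepsilon)+2\varepsilon f_0(x+\tfrac12)$, differentiate the transfer operator, change variables along the two inverse branches, and bound the resulting quotient-rule terms using $T'\ge 2(1-\varepsilon\delta)$ and $\|f_0\|_\infty\le 1+\delta$ to arrive at the identical constant $c=\frac{1+\varepsilon}{2(1-\varepsilon\delta)^2}$. Your packaging is slightly cleaner --- deriving $T'$ via the periodic substitution and integration by parts, and isolating the general inequality $|\mathcal{L}_{f_0}h|_{TV}\le |h/T'|_{TV}$ before specialising --- and you spell out the iteration and the algebra behind $c<1$ more explicitly than the paper does, but the substance is the same.
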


We then turn to the investigation of \textit{strong coupling and synchronization}. We show two results concerning initial distributions which concentrate an essential part of the mass to a small subset of $\mathbb{T}$. We show that in this case the distribution converges to a point mass moving chaotically on the torus if $\varepsilon$ is sufficiently large.

 Let $d$ be the quasimetric on $\mathbb{T}$ such that $d(x,y)$ is the length of the counterclockwise arc from $x$ to $y$. Suppose $f$ is supported on an interval of the torus $[b_1,b_2]$ or $[0,b_2] \cup [b_1,1]$ with $d(b_1,b_2) \leq \frac{1}{2}$.

 If $\supp f \subseteq [b_1,b_2]$ we define the center of mass as
\[
M(f)=\int_{b_1}^{b_2}yf(y)\text{ d}y.
\]

If $\supp f \subseteq [0,b_2] \cup [b_1,1]$, we define $M(f)$ as
\[
M(f)=\int_0^{b_2}(y+1)f(y)\text{ d}y+\int_{b_1}^{1}yf(y)\text{ d}y \qquad \text{ mod } 1,
\]
see figure \ref{cm}.
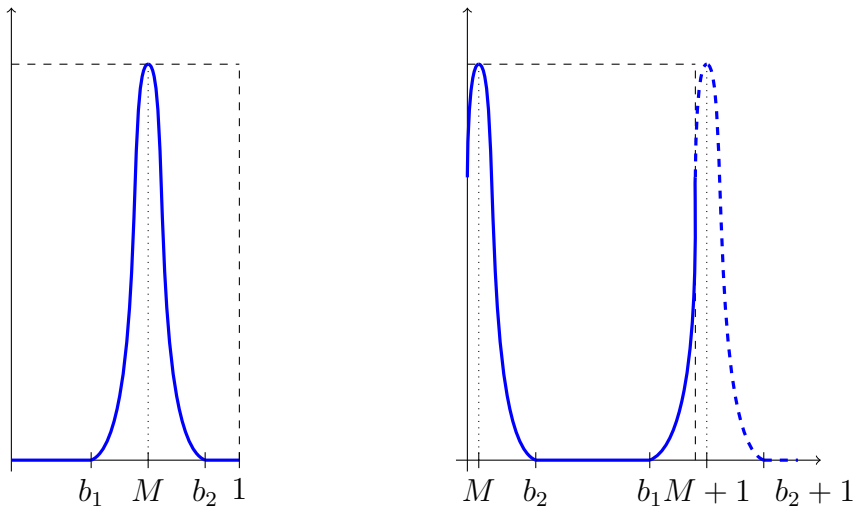
\begin{figure}
\centering
\begin{tikzpicture}[scale=1.5]
       \draw[shift={(-2,0)}] (0,0) -- (2,0);
       \draw[->,shift={(-2,0)}] (0,-0.1) -- (0,4);
       \draw[dashed,shift={(-2,0)}] (2,0) -- (2,3.5);
       \draw[dashed,shift={(-2,0)}] (0,3.5) -- (2,3.5);
       \draw[dotted,shift={(-2,0)}] (1.2,0) -- (1.2,3.5);
       \draw[very thick, blue,shift={(-2,0)}] (0,0) -- (0.7,0);
       \draw[very thick, blue,shift={(-2,0)}] (1.7,0) -- (2,0);
       \draw[very thick, blue,shift={(-2,0)}] (0.7,0) .. controls (1.2,0.2) and (1,3.5) .. (1.2,3.5);
       \draw[very thick, blue,shift={(-2,0)}] (1.7,0) .. controls (1.2,0.2) and (1.4,3.5) .. (1.2,3.5);
       \foreach \x/\xtext in {1.2/M,2/1,1.7/b_2,0.7/b_1}
           \draw[shift={(\x,0)},shift={(-2,0)}] (0pt,2pt) -- (0pt,-2pt) node[below] {$\xtext$};

       \draw[->,shift={(2,0)}] (-0.1,0) -- (3.1,0);
              \draw[->,shift={(2,0)}] (0,-0.1) -- (0,4);
              \draw[dashed,shift={(2,0)}] (2,0) -- (2,3.5);
              \draw[dashed,shift={(2,0)}] (0,3.5) -- (2,3.5);
              \draw[dotted,shift={(0.9,0)}] (1.2,0) -- (1.2,3.5);
              \draw[dotted,shift={(2.9,0)}] (1.2,0) -- (1.2,3.5);
              \draw[very thick, blue,shift={(2.9,0)}] (-0.3,0) -- (0.7,0);
              \draw[very thick,dashed, blue,shift={(2.9,0)}] (1.7,0) -- (2,0);
              \draw[very thick, blue,shift={(2.9,0)}] (0.7,0) .. controls (1.2,0.2) and (1.08,2.5) .. (1.1,2.5);

              \draw[very thick,dashed, blue,shift={(2.9,0)}] (1.7,0) .. controls (1.2,0.2) and (1.4,3.5) .. (1.2,3.5);

              \draw[very thick, blue,shift={(0.9,0)}] (1.7,0) .. controls (1.2,0.2) and (1.4,3.5) .. (1.2,3.5);

              \draw[very thick, blue,shift={(0.9,0)}] (1.1,2.5) .. controls (1.1,2.5) and (1.1,3.5) .. (1.2,3.5);

              \draw[very thick,dashed, blue,shift={(2.9,0)}] (1.1,2.5) .. controls (1.1,2.5) and (1.1,3.5) .. (1.2,3.5);
              \foreach \x/\xtext in {2.1/M}
                            \foreach \x/\xtext in {2.1/M,4.1/M+1,2.6/b_2,3.6/b_1}
                  \draw[shift={(\x,0)}] (0pt,2pt) -- (0pt,-2pt) node[below] {$\xtext$};

              \foreach \x/\xtext in {2.1/M}
                                          \foreach \x/\xtext in {4.6/b_2+1}
                                \draw[shift={(\x,0)}] (0pt,2pt) -- (0pt,-2pt) node[below right] {$\xtext$};
     \end{tikzpicture}
     \caption{Definition of the center of mass.} \label{cm}
\end{figure}
\begin{theo} \label{t5} Let $f_0 \in C(\mathbb{T})$ with
 $\supp f_0 \subseteq [b_1,b_2]$  or
 $\supp f_0 \subseteq [0,b_2] \cup [b_1,1]$ with $d(b_1,b_2) \leq \frac{1}{2}$.

The density $f_1=\mathcal{L}_{f_0}f_0$ is in $C(\mathbb{T})$ and has support $\supp f_1 \subseteq [b_1',b_2']$,  or $\supp f_1 \subseteq [0,b_2'] \cup [b_1',1]$, such that
\begin{itemize}
\item $d(b'_1,b'_2)=2(1-\varepsilon)d(b_1,b_2)$,
\item  $\sup f_1=\frac{\sup f_0}{2(1-\varepsilon)}$,
\item $M(f_1)=2M(f_0) \mod 1$.
\end{itemize}
\end{theo}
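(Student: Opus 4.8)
The plan is to exploit the fact that on a short arc the coupling map $\Phi_{f_0}$ collapses to an affine contraction toward the center of mass, after which all three assertions are elementary. The first move is to pass to a lift that turns the support into a genuine interval. Since $\supp f_0$ lies in an arc of length $\ell := d(b_1,b_2) \le \tfrac12$, I choose the lift placing the support at $[b_1, b_1+\ell] \subset \mathbb{R}$; in the wrapped case $\supp f_0 \subseteq [0,b_2]\cup[b_1,1]$ this amounts to replacing $y\in[0,b_2]$ by $y+1$. With this lift the two definitions of $M(f_0)$ given in the statement coincide with the single expression $M(f_0)=\int \tilde y\, f_0(\tilde y)\,\mathrm{d}\tilde y \bmod 1$, so the bookkeeping of the ``$+1$'' shifts is absorbed once and for all.

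The key step is the linearization. For $x$ in the lifted support, every $y\in\supp f_0$ satisfies $|y-x|\le \ell\le\tfrac12$, hence $g(y-x)=y-x$ off a null set. Using $\int f_0 = 1$ and $M(f_0)=\int y f_0$,
\[
\Phi_{f_0}(x)=x+\varepsilon\int (y-x)f_0(y)\,\mathrm{d}y=(1-\varepsilon)x+\varepsilon M(f_0),
\]
so that $F_{f_0}(x)=2(1-\varepsilon)x+2\varepsilon M(f_0)\bmod 1$ is affine with constant slope $2(1-\varepsilon)>0$ on the support. Because the total stretch is $2(1-\varepsilon)\ell\le 1-\varepsilon<1$, the restriction of $F_{f_0}$ to the support is an increasing bijection onto an arc of length $<1$; such an arc embeds injectively in $\mathbb{T}$ with no folding or self-overlap, although it may or may not contain the point $0$, which is exactly what produces the two alternatives $[b_1',b_2']$ and $[0,b_2']\cup[b_1',1]$ in the conclusion.

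The three claims then follow by routine computation. The endpoints map to $b_i'=F_{f_0}(b_i)$, giving $d(b_1',b_2')=2(1-\varepsilon)(b_2-b_1)=2(1-\varepsilon)d(b_1,b_2)$. As $F_{f_0}$ is an affine diffeomorphism of slope $2(1-\varepsilon)$ from the support onto its image, the pushforward (i.e.\ transfer operator) formula gives $f_1(w)=f_0(F_{f_0}^{-1}(w))/(2(1-\varepsilon))$, which is continuous (and vanishes at the endpoints since $f_0$ does, so $f_1\in C(\mathbb{T})$ even after wrap-around) and satisfies $\sup f_1=\sup f_0/(2(1-\varepsilon))$. For the center of mass I change variables $w=F_{f_0}(x)$, so $f_1(w)\,\mathrm{d}w=f_0(x)\,\mathrm{d}x$, and compute in the lift
\[
M(f_1)=\int F_{f_0}(x)f_0(x)\,\mathrm{d}x=2(1-\varepsilon)M(f_0)+2\varepsilon M(f_0)=2M(f_0),
\]
which projects to $2M(f_0)\bmod 1$.

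The genuinely delicate point is not any single identity but the mod-$1$ bookkeeping: confirming that the lift is consistent across both support cases, that the possible wrap-around of the image arc about $0$ is precisely captured by the two alternatives, and that $M$ as defined (with its shifts) is exactly the quantity transforming linearly. Once the problem is lifted so that the support is an honest interval and $g$ linearizes, everything else is affine; I expect the only real care to be needed in verifying injectivity and embeddedness of the image arc and in matching the center-of-mass convention to the chosen lift.
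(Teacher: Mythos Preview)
Your proposal is correct and takes essentially the same approach as the paper. Both arguments reduce to the observation that $F_{f_0}$ restricted to the support is the affine map $x\mapsto 2(1-\varepsilon)x+2\varepsilon M(f_0)$---the paper obtains this by specializing its explicit piecewise formula for $F_{f_0}$, while you obtain it more directly by linearizing $g$ on the lifted support---after which the three conclusions follow by the same elementary computations (endpoint images, constant Jacobian in the transfer operator, change of variables for the center of mass).
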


\begin{disc}\label{d4}
Note that this theorem applies to any value of $\eps$ if the conditions on the support of the initial density $f$ are satisfied. Nonetheless, the important corollary corresponds to the strongly coupled case $\frac{1}{2} < \varepsilon$, when the support shrinks, and the process can be iterated. Hence we obtain in this case that the support of the distribution shrinks to zero at an exponential rate, its supremum tends to infinity, hence \textbf{the distribution converges to a point mass} -- the positions of all sites synchronize. But because the center of mass evolves according to the doubling map, the time averages defined by \eqref{time} will tend to Lebesgue, the system will remain chaotic in this sense.

Let us keep on discussing the  case $\frac{1}{2} < \varepsilon$. In this regime, on the one hand, for sufficiently even initial distributions Theorem~\ref{t4} applies, that is,
asymptotically the sites are uniformly distributed on $\mathbb{T}$. On the other hand, for sufficiently concentrated initial distributions, Theorem~\ref{t5} applies,
that is, the sites are fully synchronized asymptotically. This is analogous to the phenomena observed in the attracting regime for the case of finitely many sites, see Discussion~\ref{d3}.
\end{disc}

We further show that synchronization can take place, even if the support of the initial distribution is larger, provided that the coupling is strong enough.

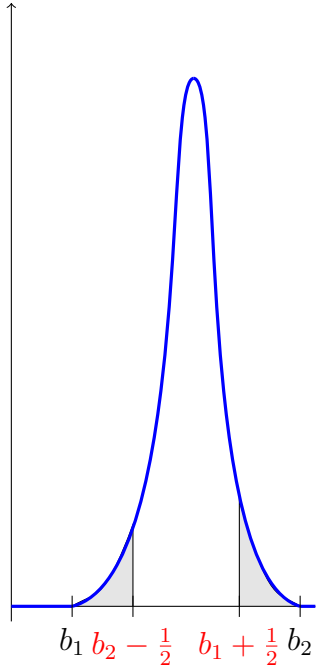
\begin{figure}[h!]
  \centering
  \begin{tikzpicture}[scale=2]
  \draw [fill=gray,fill opacity=0.2] (0.8,0) -- (0.8,0.54) .. controls (0.6,0) and (0.55,0.1) .. (0.4,0);
  \draw [fill=gray,fill opacity=0.2] (1.5,0) -- (1.5,0.7) .. controls (1.7,0) and (1.75,0.1) .. (1.9,0);
        \draw (0,0) -- (2,0);
        \draw[->] (0,-0.1) -- (0,4);
        \draw[very thick, blue] (0,0) -- (0.4,0);
        \draw[very thick, blue] (1.9,0) -- (2,0);
        \draw[very thick, blue] (0.4,0) .. controls (1.2,0.2) and (1,3.5) .. (1.2,3.5);
        \draw[very thick, blue] (1.9,0) .. controls (1.2,0.2) and (1.4,3.5) .. (1.2,3.5);
        \foreach \x/\xtext in {0.4/b_1,1.9/b_2, 0.8/\textcolor{red}{b_2-\frac{1}{2}},1.5/\textcolor{red}{b_1+\frac{1}{2}}}
            \draw[shift={(\x,0)}] (0pt,2pt) -- (0pt,-2pt) node[below] {$\xtext$};
      \end{tikzpicture}
      \caption{The shaded area is assumed to be $< \frac{1}{4}$.} \label{shade}
      \end{figure}

\begin{theo} \label{t6}
Let $0 \leq b_1 < \frac{1}{2} <  b_2 \leq 1$ such that $\frac{1}{2} < |b_2-b_1| < 1$ and let $f_0 \in C(\mathbb{T})$, such that $\supp f_0 \subseteq [b_1,b_2]$. Let us assume that
\begin{equation}
\tag{$T1$}
\mathcal{C}=\int_0^{b_2-\frac{1}{2}}f_0(y)\text{ d}y+\int_{b_1+\frac{1}{2}}^1f_0(y)\text{ d}y < \frac{1}{4}.
\end{equation}
Let
\[
\frac{(b_2-b_1)-\frac{1}{4}}{(b_2-b_1)-\mathcal{C}} \leq \varepsilon < 1.
\]
 The density $f_1=\mathcal{L}_{f_0}f_0 \in C(\mathbb{T})$ has support contained in some interval  of $\mathbb{T}$  $[0,b_2'] \cup [b_1',1]$ such that  $d(b'_1,b'_2) \leq \frac{1}{2}$.
\end{theo}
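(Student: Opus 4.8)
The plan is to analyze a single application of the dynamics by first controlling the coupling map $\Phi_{f_0}$ on the support $[b_1,b_2]$ and only then composing with the doubling map $d$. Write $L=b_2-b_1\in(\tfrac12,1)$ and let $\tilde\Phi$ be the lift of $\Phi_{f_0}$, so that $\tilde\Phi(x)=x+\varepsilon\int_{b_1}^{b_2}g(y-x)f_0(y)\,dy$. The first step is to show that $\tilde\Phi$ is strictly increasing on $[b_1,b_2]$. Differentiating and noting that the only singularities of $g$ met in the relevant range are the downward unit jumps at $y-x=\pm\tfrac12$, I obtain
\[
\tilde\Phi'(x)=1-\varepsilon+\varepsilon\bigl(f_0(x+\tfrac12)+f_0(x-\tfrac12)\bigr)\ge 1-\varepsilon>0,
\]
with $f_0$ extended by zero off $[b_1,b_2]$. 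Hence $\Phi_{f_0}$ maps $[b_1,b_2]$ homeomorphically onto an arc of length $\ell:=\tilde\Phi(b_2)-\tilde\Phi(b_1)$.

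The crux, and the step I expect to be the main obstacle, is the evaluation of $\ell$; this is where condition $(T1)$ enters and where the signed distance $g$ must be handled across a support \emph{longer} than a half-circle. I would compute $\ell=L+\varepsilon\int_{b_1}^{b_2}\bigl(g(y-b_2)-g(y-b_1)\bigr)f_0(y)\,dy$ by splitting $[b_1,b_2]$ at the two points $b_2-\tfrac12$ and $b_1+\tfrac12$, which satisfy $b_1<b_2-\tfrac12<b_1+\tfrac12<b_2$ precisely because $\tfrac12<L<1$. A direct case analysis of where $y-b_1$ and $y-b_2$ wrap shows that the integrand $g(y-b_2)-g(y-b_1)$ equals $1-L$ on the two outer pieces $[b_1,b_2-\tfrac12]$ and $[b_1+\tfrac12,b_2]$, and equals $-L$ on the middle piece. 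Since the total mass of the outer pieces is exactly $\mathcal{C}$ and that of the middle piece is $1-\mathcal{C}$, this yields
\[
\ell=L+\varepsilon\bigl((1-L)\mathcal{C}-L(1-\mathcal{C})\bigr)=(1-\varepsilon)L+\varepsilon\mathcal{C}.
\]
The delicate points here are the correct identification of the three regions and the recognition that the ``wrapped'' mass appearing in the outer pieces is exactly the quantity $\mathcal{C}$ from $(T1)$; checking that the jump terms in $\tilde\Phi'$ enter with the favorable (nonnegative) sign is the companion subtlety.

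With this identity the statement follows by elementary algebra. The hypothesis $\varepsilon\ge\frac{L-1/4}{L-\mathcal{C}}$ is exactly equivalent to $\ell\le\tfrac14$ (note $L-\mathcal{C}>0$ since $\mathcal{C}<\tfrac14<L$), and condition $(T1)$, namely $\mathcal{C}<\tfrac14$, is precisely what makes this lower bound strictly below $1$, so that the admissible range of $\varepsilon$ is nonempty. Since the lift of $F_{f_0}=d\circ\Phi_{f_0}$ is $2\tilde\Phi$, which increases by $2\ell$ across $[b_1,b_2]$, the image $F_{f_0}([b_1,b_2])$ is an arc of length $2\ell\le\tfrac12$. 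As $\supp f_1\subseteq F_{f_0}(\supp f_0)\subseteq F_{f_0}([b_1,b_2])$, writing this arc in the form $[0,b_2']\cup[b_1',1]$ gives $d(b_1',b_2')=2\ell\le\tfrac12$.

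Finally I would record the regularity of $f_1$. Because $2\tilde\Phi$ increases by $2\ell<1$ across $[b_1,b_2]$, the map $F_{f_0}$ is injective there, so the transfer operator $\mathcal{L}_{f_0}$ has a single branch: $f_1$ vanishes off the image arc and equals $f_0(x)/F_{f_0}'(x)$ at the unique preimage $x$ on it. Continuity of $f_1$ then follows because $\tilde\Phi'$ is continuous (the jump terms above are continuous since $f_0$ vanishes at the endpoints $b_1,b_2$ of its support), because $F_{f_0}'\ge 2(1-\varepsilon)>0$ is bounded away from zero, and because $f_0(b_1)=f_0(b_2)=0$ forces $f_1$ to tend to $0$ at the two endpoints of the arc, matching the value $0$ outside it.
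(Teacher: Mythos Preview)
Your proof is correct and follows essentially the same route as the paper: both arguments reduce to computing the length of $F_{f_0}([b_1,b_2])$ and obtain the identical expression $2(1-\varepsilon)(b_2-b_1)+2\varepsilon\mathcal{C}$, after which the inequality on $\varepsilon$ is straightforward algebra. The only cosmetic difference is that the paper plugs $b_1$ and $b_2$ directly into its precomputed piecewise formula for $F_{f_0}$, whereas you factor $F_{f_0}=d\circ\Phi_{f_0}$ and split the integral $\int_{b_1}^{b_2}\bigl(g(y-b_2)-g(y-b_1)\bigr)f_0(y)\,dy$ into three pieces; your additional verification of monotonicity of $\tilde\Phi$ and of the continuity of $f_1$ makes the argument slightly more self-contained than the paper's version.
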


Assuming that an essential part of the mass is supported on a small subset of $\mathbb{T}$ (for an illustration see figure \ref{shade}), this theorem states that in one step the support shrinks to an interval on the torus with length less than $\frac{1}{2}$ when the coupling is sufficiently strong, and then our previous analysis will apply.

So both in the case of finite system size and a system with a continuum of sites, we observed that for some classes of initial distributions corresponding to suficciently concentrated initial states, synchronization occurs when the coupling is strong.

Observe that in all discussed classes of absolutely continuous initial measures, the time averages \eqref{time} converge once again to $\lambda$ for typical initial measures. We conjecture that this is true in a much greater generality.

\section{Finite system size} \label{3}

In this section we consider an initial measure concentrated on finitely many points of the torus. Let $x_1,\dots,x_n \in \mathbb{T}$ and
\[
\mu_0=\frac{1}{N}\sum_{i=1}^{N}\delta_{x_i}.
\]
This measure can be thought of as describing a system of $N$ identical sites with initial state $x_1,\dots,x_n \in \mathbb{T}$.
In this case the dynamics is
\begin{equation} \label{diszkret}
F_{\mu_0}(x)=2\left(x+\frac{\varepsilon}{N}\sum_{i=1}^{n}g(x_i-x)\right) \qquad \text{ mod 1},
\end{equation}
and the pushforward measure is
\[
\mu_1=(F_{\mu_0})_{\ast}\mu_0=\frac{1}{N}\sum_{i=1}^{N}\delta_{F_{\mu_0}(x_i)}.
\]
To see this, let $A \subseteq \mathbb{T}$ be measurable. Then
\[
(F_{\mu_0})_{\ast}\mu_0(A)=\mu_0(F_{\mu_0}^{-1}(A))=\frac{1}{N}\sum_{i=1}^{N}\delta_{x_i}(F_{\mu_0}^{-1}(A))=\frac{1}{N}\sum_{i=1}^{N}\delta_{F_{\mu_0}(x_i)}(A).
\]
This means that the pushforward measure is supported on the image of the initial points by the map $F_{\mu_0}$.

When $\varepsilon=0$, each site evolves independently according to the doubling map. In this case the distribution of the sites in step $t$ is
\[
\mu_t=\frac{1}{N}\sum_{i=1}^N\delta_{d^t(x_i)}.
\]
Since the doubling map is well known to be ergodic with respect to the Lebesgue measure on $\mathbb{T}$, we see that for Lebesgue--almost every initial position of the $N$ sites the time averages $A(T)$ defined by \eqref{time} converge to the Lebesgue measure on the torus.

Now when $\varepsilon \neq 0$, the dynamics is more complicated than the doubling map. We are going to label the sites, and study the following $N$ dimensional system instead. Recalling equation \eqref{mainsing}, the dynamics is given by
\[
(F_{\varepsilon,N}(x))_s=2\left(x_s+\frac{\varepsilon}{N}\sum_{r=1}^Ng(x_r-x_s) \right), \quad \forall s \in \{1,\dots,N\}, \quad  x=(x_s)_{s=1}^N \in \mathbb{T}^N,
\]
denoting the $N$-dimensional torus $\mathbb{R}^N\backslash \mathbb{Z}^N$ by $\mathbb{T}^N$.

We are going to state an observation, which will play an important role in the proofs of Theorems \ref{t1} -- \ref{t3}.

\begin{obs} \label{obs1}
Let us think of $\mathbb{T}^N$ as the $N$ dimensional unit hypercube with opposite sides identified. Notice that the circles parallel to the main diagonal of the $N$ dimensional hypercube, $$(x_1,\dots,x_N)+(t,\dots,t) \mod 1, \qquad (x_1,\dots,x_n) \in \mathbb{T}^N, \quad  t \in \mathbb{R}$$ are mapped onto each other without being cut by singularities (since any such circle is either disjoint of a singularity or contained completely by it). Moreover, the effect of the map $F_{\varepsilon,N}$ on these circles is the doubling map: they will be stretched to twice their size, and mapped onto another such circle covering it exactly twice. From this, we can see that ergodic components must contain full circles, and in fact the basins of the ergodic invariant measures also contain full circles.
\end{obs}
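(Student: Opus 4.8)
The plan is to isolate the single algebraic relation that drives everything and then feed it into the transfer operator. Writing $\mathbf 1=(1,\dots,1)$ and $R_t\colon \mathbb{T}^N\to\mathbb{T}^N$, $R_t(x)=x+t\mathbf 1\bmod 1$, for the diagonal translation, the circles in the statement are exactly the orbits of $R$. First I would record two elementary facts about $F=F_{\varepsilon,N}$. The coupling term depends on $x$ only through the differences $x_r-x_s$, and these are unchanged by $R_t$; hence the condition $x_r-x_s=\tfrac12$ defining a singular hyperplane is constant along each diagonal circle, so a circle is either disjoint from a given singularity or entirely contained in it (the degenerate circles of the latter type fill a $\lambda^N$-null set and may be ignored below). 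Because $g$ depends only on differences, a direct substitution in \eqref{mainsing} gives the commutation relation $F\circ R_t=R_{2t}\circ F$ for every $t$; reading off the $s$-th coordinate, $(F(x+t\mathbf 1))_s=(F(x))_s+2t$, so $F$ carries the circle through $x$ onto the circle through $F(x)$, wrapping it around exactly twice, that is, acting as the doubling map in the parameter $t$. (The companion fact that $\sum_s x_s$ obeys the doubling map, used repeatedly in the discussions, follows at once from the oddness of $g$.)

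Next I would convert the commutation relation into a statement about absolutely continuous invariant measures. Let $\mathcal L$ be the transfer operator of $F$ with respect to $\lambda^N$ — a positive $L^1$-contraction preserving the integral, since $F$ is piecewise affine with nonvanishing Jacobian — and let $\mathcal T_t$ be the density action of $(R_t)_\ast$, $(\mathcal T_t\phi)(x)=\phi(x-t\mathbf 1)$. Dualizing $F\circ R_t=R_{2t}\circ F$ yields $\mathcal L\,\mathcal T_t=\mathcal T_{2t}\,\mathcal L$. If $h$ is an invariant density, $\mathcal L h=h$, iteration gives $\mathcal L^n\mathcal T_t h=\mathcal T_{2^n t}h$, whence the contraction property gives the key bound $\|\mathcal T_{2^n t}h-h\|_1\le\|\mathcal T_t h-h\|_1$ for all $n$. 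For Lebesgue-almost every $t$ the sequence $\{2^n t\bmod 1\}$ equidistributes, so the Banach-space Weyl averages of the continuous $1$-periodic map $u\mapsto\mathcal T_u h\in L^1$ converge, $\frac1M\sum_{n<M}\mathcal T_{2^n t}h\to H$ in $L^1$, where $H(x)=\int_0^1 h(x-u\mathbf 1)\,du$ is the $R$-average of $h$. Averaging the key bound over $n$ gives $\|H-h\|_1\le\|\mathcal T_t h-h\|_1$; letting good $t\to 0$ and using $L^1$-continuity of translation forces $h=H$. Thus every invariant density is constant along diagonal circles, so every absolutely continuous invariant measure — in particular the support of each ergodic component — is a union of full circles.

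For the basin statement I would use the same relation in its iterated pointwise form $F^t\circ R_s=R_{2^t s}\circ F^t$. For a continuous test function $\varphi$, the empirical average along the orbit of $R_s x$ equals $\frac1T\sum_{t<T}\Psi(F^t x,2^t s)$ with $\Psi(y,u)=\varphi(y+u\mathbf 1)$, i.e. a Birkhoff average of the product map $F\times d$ on $\mathbb{T}^N\times\mathbb{T}$ at $(x,s)$. Let $\nu$ be an ergodic absolutely continuous invariant measure; since the doubling map $d$ is mixing, hence weakly mixing, the product $\nu\times\lambda$ is ergodic for $F\times d$. Hence for $\lambda^N\times\lambda$-almost every $(x,s)$ the average converges to $\int\Psi\,d(\nu\times\lambda)=\int_0^1\!\!\int\varphi(y+u\mathbf 1)\,d\nu(y)\,du=\int\varphi\,d\nu$, the last equality because $\nu$ is $R$-invariant by the previous paragraph. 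By Fubini, for a.e. $x$ in the basin of $\nu$ almost every point of the circle through $x$ again lies in the basin; that is, the basin is $R$-invariant mod $0$ and so contains full circles.

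The hard part is precisely the passage from the soft, purely geometric foliation picture (circles map to circles) to the measure-theoretic conclusion that invariant objects are genuinely circle-saturated: along each circle $F$ is an expanding doubling map, so there is no invariant structure hiding inside a single circle, and the real content is that none can hide transversally either. The two devices that make this rigorous are the contraction-plus-equidistribution argument for the densities and, for the basins, the ergodicity of the product with the weakly mixing doubling factor — the latter crucially using the $R$-invariance of $\nu$ established first. A minor point to dispatch along the way is that the degenerate circles contained in singular hyperplanes form a $\lambda^N$-null set, so they do not affect any of the almost-everywhere statements.
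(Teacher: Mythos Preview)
Your argument is correct and takes a genuinely different route from the paper's. The paper does not give a stand-alone proof of the Observation; the nontrivial claim that invariant densities are constant along diagonal circles is justified inside the proof of Theorem~\ref{t1}. There the authors pick an ergodic acim $\mu_f$, construct an auxiliary density $f_0$ that is already constant on circles and supported on the basin of $\mu_f$, observe that the transfer operator preserves circle-constancy (because preimages of a point lie on a fixed set of circles depending only on the circle of the point), and then use ergodicity of $\mu_f$ to make the Ces\`aro averages of $\mathcal L^n f_0$ converge to $f$, forcing $f$ to be circle-constant. Your approach instead extracts the operator identity $\mathcal L\,\mathcal T_t=\mathcal T_{2t}\,\mathcal L$ directly from the commutation $F\circ R_t=R_{2t}\circ F$ and combines $L^1$-contraction with equidistribution of $\{2^n t\}$ to force $h=H$; this is cleaner in that it applies to \emph{any} invariant density without first invoking the ergodic decomposition or constructing an auxiliary density. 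The paper's argument is more elementary but leans on the finiteness/ergodicity input from Thomine; yours is more self-contained and would generalize whenever the diagonal factor is an ergodic Lebesgue-preserving map.

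One small slip to tighten: in the basin argument you assert convergence for $\lambda^N\times\lambda$-a.e.\ $(x,s)$, but Birkhoff for the product system gives it only for $\nu\times\lambda$-a.e.\ $(x,s)$. Since $\nu\ll\lambda^N$ this still yields the desired conclusion on $\mathrm{supp}(\nu)$, and hence that the basin is $R$-invariant mod~$0$; but the phrasing ``for a.e.\ $x$ in the basin of $\nu$'' should read ``for $\nu$-a.e.\ $x$'' (equivalently, $\lambda^N$-a.e.\ on the support). This is cosmetic and does not affect the substance of your proof.
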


\begin{proof}[Proof of Theorem \ref{t1}.] We are going to show that for arbitrary finite system size, the time averages defined by \eqref{time} converge to Lebesgue not just for $\varepsilon=0$, but for all $0 \leq \varepsilon < \frac{1}{2}$.

$F_{\varepsilon,N}$ is a piecewise affine map, whose Jacobian has eigenvalues 2 with multiplicity 1, and $2(1-\varepsilon)$ with multiplicity $N-1$. Hence, by our assumption  $0 \leq \varepsilon < \frac{1}{2}$, the map is fully expanding. According to Thomine \cite{thomine2010spectral}, such maps have a finite number of ergodic absolutely continuous invariant measures (acim-s), whose basins cover Lebesgue almost all of $\mathbb{T}^{N}$ (see also \cite{saussol2000absolutely} for details on existence).

We claim that all invariant densities are constant on the 'diagonal' circles mentioned in Observation~\ref{obs1}. Let us consider one of the ergodic acim-s and its density, to be denoted by $\mu_f$ and $f$ in the sequel.
Define another density $f_0$, which is supported on the circles contained in the basin of $\mu_f$, and constant on each supporting circle.

Notice that the value of the pushforward density at each point $x \in \mathbb{T}^N$ only depends on which circles $x$ has preimages, and this only depends on which circle $x$ is on. In conclusion, the density obtained by pushing $f_0$ forward  will be also constant on such circles. Iterating this argument we see that the time averages obtained from these pushforward densities will also be constant on such circles. By using the ergodicity of $\mu_f$, the time averages of $f_0$ converge to $f$,
and we see that $f$ is also constant on such circles as claimed above.

It follows that the density of the $N$-dimensional invariant measure $\mu_f$ will have $N-1$ dimensional marginals which are constant on circles parallel to the main diagonal of the $N-1$ dimensional hypercube. By induction on the dimension we see that the one-dimensional marginals of $\mu_f$ are Lebesgue. We are going to denote these marginals by $(\mu_f)_i$.

Now let
 \[
 \mathcal{A}(T)=\frac{1}{T}(\delta_{x_1,\dots, x_N}+\dots+\delta_{F_{\varepsilon,N}^{T-1}(x_1,\dots, x_N)}),
 \]
 where $\delta_{x_1,\dots, x_N}$ is the Dirac-measure supported on $(x_1,\dots, x_N) \in \mathbb{T}^N$. Because of the ergodicity of $\mu_f$, this measure converges to $\mu_f$ in the weak topology for $\mu_f$-almost every $x=(x_1,\dots,x_N) \in \mathbb{T}^N$. By restricting the convergence to functions only depending on the variable $x_i$, we see that the marginals $\mathcal{A}(T)_i$ converge to $(\mu_f)_i=\lambda$. Observe that
 \[
 \mathcal{A}(T)_i=\frac{1}{T}(\delta_{x_i}+\dots+\delta_{F^{T-1}(x_i)}),
 \]
 where $F^{T-1}=F_{\mu_{T-2}}\dots F_{\mu_1}F_{\mu_0}$, and $F_{\mu_t}$ is defined by \eqref{diszkret}. Hence we can conclude that the time averages
 \[
 A(T)=\frac{1}{T}\sum_{t=0}^{T-1} \mu_t=\frac{1}{T}\sum_{t=0}^{T-1}\left(\frac{1}{N}\sum_{i=1}^{N}\delta_{x_i}+\frac{1}{N}\sum_{i=1}^{N}\delta_{F(x_i)}+\dots+ \frac{1}{N}\sum_{i=1}^{N}\delta_{F^{T-1}(x_i)} \right)=\frac{1}{N}\sum_{i=1}^N \mathcal{A}(T)_i
 \]
 also converge to the Lebesgue measure for $\mu_f$-almost every $x=(x_1,\dots,$ $x_N)$ $\in \mathbb{T}^N$. Now this is true for each of the finitely many ergodic invariant measures, and we mentioned that their basins cover Lebesgue almost all of $\mathbb{T}^N$. By definition, the basin of a measure $\mu$ is the set of points $x \in \mathbb{T}^N$ for which $\frac{1}{T}\sum_{t=0}^{T-1}\delta_{F^t x}$ converges to the measure $\mu$. Therefore, the convergence of $A(T)$ to Lebesgue actually holds for all initial conditions in the basins of the invariant measures, hence for $\lambda^N$-a.e.~$x \in \mathbb{T}^N$.
\end{proof}

Observe that since $g$ is an odd function,
 \[
 \sum_{s=1}^N (F_{\varepsilon,N}(x))_s=2\sum_{s=1}^N x_s \qquad \text{mod 1}.
 \]
 So we see the quantity $m=\sum_{s=1}^N x_s$ evolves according to the doubling map. Let us define new coordinates on the torus as
 \begin{align*}
 u_1&=\sum_{s=1}^N x_s \\
 u_{i+1}&=x_i-x_{i+1}, \quad i=1,\dots, N-1
 \end{align*}
 Let us denote $u=(u_1,\dots,u_N)$ for brevity. The system
 \begin{align*}
 (G_{\varepsilon,N}(u))_1&=\sum_{s=1}^N 2x_s \quad \text{mod 1} \\
 (G_{\varepsilon,N}(u))_{i+1}&=(F_{\varepsilon,N}(x))_i-(F_{\varepsilon,N}(x))_{i+1} \quad \text{mod 1}, \quad i=1,\dots, N-1
 \end{align*}
 is a factor of the original system defined by $F_{\varepsilon,N}$, since $\{x,x+\frac{1}{N},\dots,x+\frac{N-1}{N}\}$ share the same $u$-coordinates. This map $G_{\varepsilon,N}$ has a direct product structure: $G_{\varepsilon,N}$ acts as the doubling map in the direction of the first coordinate, and independently in the perpendicular direction.

 Since the mapping $G_{\varepsilon,N}|_{u_2,\dots,u_N}$ is in fact a function of the differences of the positions of the sites, it will prove quite useful in studying synchronization phenomena. The Jacobian of this mapping is $2(1-\varepsilon)I$, where $I$ is the identity matrix of size $N-1$. So this map is piecewise expanding, if $0 \leq \varepsilon < \frac{1}{2}$, and contracting if $\frac{1}{2} < \varepsilon < 1$. In the next two subsections, we are going to study in detail a system with 2 and 3 sites, that is when $G_{\varepsilon,N}|_{u_2,\dots,u_N}$ is a mapping of $\mathbb{T}$ and $\mathbb{T}^2$, respectively.

\subsection{\boldmath$N=2$} \label{3.1}
In this subsection, we are going to give the proof of Theorem \ref{t2} and provide background for Discussion~\ref{d2}.

\begin{proof}[Proof of Theorem \ref{t2}.]
We first give a geometric description of the Milnor attractor, then prove that it supports a unique ergodic acim in the expanding case $(0 \leq \varepsilon < \frac{1}{2}$). We prove that this measure is mixing if and only if $0 \leq \varepsilon <  1-\frac{\sqrt{2}}{2}$. We finish this section by commenting on the contracting case $\frac{1}{2} < \varepsilon < 1$.

The map $F_{\varepsilon, N}$ takes the form
 \begin{equation}
 F_{\varepsilon,2}(x,y)=(2x+\varepsilon g(y-x), 2y+\varepsilon g(x-y)) \quad \text{ mod } 1, \quad x,y \in \mathbb{T}.
 \end{equation}

%

 The factor map $G_{\varepsilon,2}$ takes the form
 \begin{equation}
 G_{\varepsilon, 2}(u,v)=(2u, 2v-2\varepsilon g(v) ) \quad \text{ mod }1, \quad (u,v) \in \mathbb{T}^2.
 \end{equation}

Since $u=x+y \mod 1$ and $v=x-y \mod 1$, the points with coordinates $(x,y)$ and $\left(x+\frac{1}{2},y+\frac{1}{2} \right) \mod 1$ share the same $(u,v)$-coordinates. 

  Let us first study the map $G_{\varepsilon,2}$. The map of the first coordinate is the doubling map for which Lebesgue is an ergodic invariant measure. The map $v \mapsto 2v-2\varepsilon g(v)$ mod $1$ is more complicated. For the sake of simplicity, let us denote this mapping of $\mathbb{T}$ by $H$, in detail it takes the form
 \begin{equation} \label{h}
  H(v)=
  \begin{cases}
     2(1-\varepsilon)v & \text{if }  0 \leq v < \frac{1}{2} \\
     1 & \text{if } v=\frac{1}{2} \\
     2(1-\varepsilon)v+2\varepsilon-1       & \text{if } \frac{1}{2} < v \leq 1
    \end{cases}
  \end{equation}
 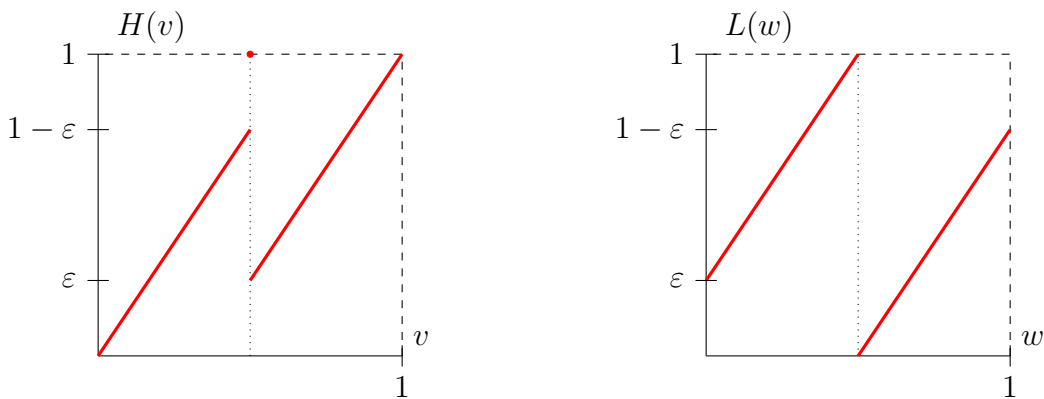
\begin{figure}
 \centering
 \begin{tikzpicture}[scale=2]
 \draw (0,0) -- (2,0) node[above right] {$w$};
        \draw (0,0) -- (0,2) node[above right] {\hspace{0.1cm}$L(w)$};
        \draw[dashed] (2,0) -- (2,2) -- (0,2);
        \draw[dotted] (1,0) -- (1,2);
        \draw[very thick,red] (0,0.5) -- (1,2);
        \draw[very thick,red] (1,0) -- (2,1.5);
        \foreach \x/\xtext in {2/1}
            \draw[shift={(\x,0)}] (0pt,2pt) -- (0pt,-2pt) node[below] {$\xtext$};
        \foreach \y/\ytext in {2/1,1.5/1-\varepsilon,0.5/\varepsilon}
              \draw[shift={(0,\y)}] (2pt,0pt) -- (-2pt,0pt) node[left] {$\ytext$};

 \draw[shift={(-4,0)}] (0,0) -- (2,0) node[above right] {$v$};
         \draw[shift={(-4,0)}] (0,0) -- (0,2) node[above right] {\hspace{0.1cm}$H(v)$};
         \draw[dashed,shift={(-4,0)}] (2,0) -- (2,2) -- (0,2);
         \draw[dotted,shift={(-4,0)}] (1,0) -- (1,2);
         \draw[very thick,red,shift={(-4,0)}] (0,0) -- (1,1.5);
         \filldraw[red,shift={(-4,0)}] (1,2) circle (0.02cm);
         \draw[very thick,red,shift={(-4,0)}] (1,0.5) -- (2,2);
         \foreach \x/\xtext in {2/1}
             \draw[shift={(\x,0)},shift={(-4,0)}] (0pt,2pt) -- (0pt,-2pt) node[below] {$\xtext$};
         \foreach \y/\ytext in {2/1,1.5/1-\varepsilon,0.5/\varepsilon}
               \draw[shift={(0,\y)},shift={(-4,0)}] (2pt,0pt) -- (-2pt,0pt) node[left] {$\ytext$};
 \end{tikzpicture}
 \caption{The map $H$ and $L$ for $0 < \varepsilon < \frac{1}{2}$.} \label{hl}
 \end{figure}

 We can redefine this map such that $H\left(\frac{1}{2}\right)=\varepsilon$, since the trajectory of a point is a zero Lebesgue measure set and is irrelevant in our analysis.
 Notice that no exterior point enters the intervals $[0,\varepsilon]$ and $[1-\varepsilon,1]$, and the trajectory of every point leaves the intervals $[0,\varepsilon]$ and $[1-\varepsilon,1]$ eventually due to expansion. Let us restrict our map $H$ to the interval $[\varepsilon, 1-\varepsilon]$, since the Milnor attractor must be a subset of this interval. By rescaling this restricted map, we get a centrally symmetric Lorenz map
 \begin{equation}
  L(w)=
  \begin{cases}
     2(1-\varepsilon)w+\varepsilon & \text{if }  0 \leq w < \frac{1}{2} \\
     2(1-\varepsilon)w+\varepsilon-1       & \text{if } \frac{1}{2} \leq w \leq 1
    \end{cases} \label{lorenz}
  \end{equation}
 sketched on figure \ref{hl}. Parry \cite{parry1979lorenz} gave a construction for the attractor of this map, which we recall briefly.

 We say that a Lorenz map is renormalizable, if there exists $\ell,r > 1$ and a proper subinterval $[a,b] \subset [0,1]$ such that $g: [a,b] \to [a,b]$ defined as
 \[ g(x) = \left\{
   \begin{array}{l l}
     f^{\ell}(x) & \quad \text{if $x \in [a,\frac{1}{2})$}\\
     f^{r}(x) & \quad \text{if $x \in (\frac{1}{2},b]$}
   \end{array} \right.\]
 is also a Lorenz map. A renormalization $g=(f^{\ell},f^r)$ is minimal, if for any other renormalization $g'=(f^{\ell'},f^{r'})$ of $f$ we have $\ell' \geq \ell$ and $r' \geq r$. Let us denote by $\mathcal{R}f$ the minimal renormalization of $f$. We say that $f$ is $n$ times renormalizable, if $\mathcal{R}^kf$ is renormalizable for $0 \leq k < n$, but $\mathcal{R}^nf$ is not renormalizable. Using this terminology, the result of Parry can be stated in the following way:
 \begin{theo}[Parry]
 Let $\sqrt[2^{n+1}]{2} < 2(1-\varepsilon) < \sqrt[2^n]{2}$. The map $L$ defined by \eqref{lorenz} is $n$ times renormalizable with $\mathcal{R}^kL=L^{2^{k}}|_{\mathcal{J}_k}$ for $0 \leq k \leq n$, and the renormalizalion intervals form a nested sequence around $\frac{1}{2}$:
 \[
 \frac{1}{2} \in \mathcal{J}_k \subset \mathcal{J}_{k-1} \subset \dots \subset \mathcal{J}_1 \subset \mathcal{J}_0=[0,1].
 \]
 \end{theo}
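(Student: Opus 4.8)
The plan is to reduce the theorem to a single renormalization step plus an induction on $n$, and to exploit throughout the central symmetry of $L$. Writing $s:=2(1-\varepsilon)$ for the common slope of the two branches, one reads off from \eqref{lorenz} that $L(1-w)=1-L(w)$, so $L$ is symmetric about its critical point $\tfrac12$. Consequently every renormalization interval can be taken symmetric about $\tfrac12$ and the two return times coincide, $\ell=r$. The smallest proper renormalization is therefore the period-two candidate $\ell=r=2$, and the whole argument rests on deciding exactly when it is admissible.

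First I would locate the candidate renormalization interval from the critical orbit. Since $L(\tfrac12^-)=1$, $L(\tfrac12^+)=0$, $L(1)=\tfrac{s}{2}$ and $L(0)=1-\tfrac{s}{2}$, the second images of the critical values are $\tfrac{s}{2}$ and $1-\tfrac{s}{2}$, which singles out the symmetric window
\[
\mathcal{J}_1=\left[\,1-\frac{s}{2},\ \frac{s}{2}\,\right],
\]
containing $\tfrac12$ and proper precisely because $1<s<2$. On $[1-\tfrac{s}{2},\tfrac12)$ the first iterate lands in the right branch $[\tfrac12,1)$ — this needs $L(1-\tfrac{s}{2})\geq\tfrac12$, i.e.\ $s\leq\tfrac{1+\sqrt5}{2}$, which holds on the whole relevant range — so there $L^2$ is affine with slope $s^2$; by symmetry the same holds on $(\tfrac12,\tfrac{s}{2}]$. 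Thus $L^2|_{\mathcal{J}_1}$ is a centrally symmetric piecewise affine map of slope $s^2$, with the full jump $L^2(\tfrac12^-)=\tfrac{s}{2}$, $L^2(\tfrac12^+)=1-\tfrac{s}{2}$ at the discontinuity.

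The decisive point is whether $L^2|_{\mathcal{J}_1}$ maps $\mathcal{J}_1$ into itself, equivalently whether the bottom of the left branch does not escape below $1-\tfrac{s}{2}$. A short computation yields the factorization
\[
L^2\!\left(1-\frac{s}{2}\right)-\left(1-\frac{s}{2}\right)=-\frac12\,(s-1)(s^2-2),
\]
whose sign flips exactly at $s=\sqrt2$: for $1<s<\sqrt2$ it is positive and $L^2|_{\mathcal{J}_1}$ is a genuine Lorenz map on $\mathcal{J}_1$, while for $\sqrt2<s<2$ it is negative and the left branch leaves $\mathcal{J}_1$, so the period-two renormalization fails. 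Rescaling $\mathcal{J}_1$ affinely onto $[0,1]$ turns $L^2|_{\mathcal{J}_1}$ back into a centrally symmetric linear Lorenz map of the form \eqref{lorenz}, now with slope $s^2$; that is, $\mathcal{R}L=L^2|_{\mathcal{J}_1}$ is affinely conjugate to $L$ with $s$ replaced by $s^2$.

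With this step in hand the theorem follows by induction on $n$. If $\sqrt[2^{n+1}]{2}<s<\sqrt[2^{n}]{2}$ with $n\geq1$, then $s<\sqrt2$, so $L$ is renormalizable and $\mathcal{R}L$ has slope $s^2$ satisfying $\sqrt[2^{n}]{2}<s^2<\sqrt[2^{n-1}]{2}$; applying the induction hypothesis to $\mathcal{R}L$ gives that it is $(n-1)$ times renormalizable with $\mathcal{R}^{k}(\mathcal{R}L)=(\mathcal{R}L)^{2^{k}}|=L^{2^{k+1}}|_{\mathcal{J}_{k+1}}$ on a nested family shrinking around $\tfrac12$, which reassembles into $\mathcal{R}^{k}L=L^{2^{k}}|_{\mathcal{J}_{k}}$ for $0\leq k\leq n$ and the nesting $\tfrac12\in\mathcal{J}_{n}\subset\dots\subset\mathcal{J}_{0}=[0,1]$. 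The base case $n=0$ is exactly the sign computation above: for $\sqrt2<s<2$ the map is not renormalizable. The main obstacle is not the explicit single-step algebra but the justification of minimality and, above all, of \emph{non}-renormalizability for $s>\sqrt2$: one must exclude every renormalization, not merely the symmetric period-two candidate. Here I would invoke the kneading-theoretic characterization of renormalizable Lorenz maps, using the symmetry to pin the admissible return words to the doubling family $\ell=r=2^{k}$, so that the sign of $(s-1)(s^2-2)$ genuinely detects (non-)renormalizability rather than the failure of one particular guess.
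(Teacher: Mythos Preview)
The paper does not prove this statement: it is quoted as a theorem of Parry with a reference to \cite{parry1979lorenz} and used as a black box in the analysis of the two-site factor map. There is no argument in the paper to compare yours against.

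Your sketch is structurally sound and the computations are correct. The symmetry $L(1-w)=1-L(w)$, the identification of $\mathcal{J}_{1}=[1-\tfrac{s}{2},\tfrac{s}{2}]$, the factorization $L^{2}(1-\tfrac{s}{2})-(1-\tfrac{s}{2})=-\tfrac12(s-1)(s^{2}-2)$ detecting the threshold $s=\sqrt{2}$, and the self-similarity of the family under $s\mapsto s^{2}$ all check out, and the induction is set up properly. You also correctly name the two places where the argument is incomplete. Minimality is not fully justified: symmetry shows that renormalizations come in reflected pairs, but by itself does not force $\ell=r$ or the interval to be symmetric without an additional argument that a minimal renormalization is unique. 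More seriously, for $\sqrt{2}<s<2$ your computation only excludes the symmetric period-two candidate, not all possible renormalizations; invoking the kneading-theoretic characterization is indeed the standard route (and essentially what Parry and later Glendinning--Sparrow do), but as written that step is a pointer to the literature rather than a proof. A self-contained completion would require showing, for $s>\sqrt{2}$, that no proper subinterval containing $\tfrac12$ is forward-invariant under any iterate of $L$, for instance by showing that the forward orbit of the critical values eventually covers $[0,1]$.
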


 It is easy to see that if a map is renormalizable, it cannot by mixing. Conversely, Glendinning and Sparrow \cite{glendinning1993prime} states that if a Lorenz map is not renormalizable, then it admits a unique mixing acim with support $[0,1]$. By this and the result of Parry, we see that the map $L$ has an ergodic acim on its attractor which is the union of $2^n$ mixing components if $\sqrt[2^{n+1}]{2} < 2(1-\varepsilon) < \sqrt[2^n]{2}$. We remark that there exists an explicit, albeit involved formula for the invariant density $\bar{\ell}$ of the Lorenz map, see \cite{gora2009invariant}.

 Thus the map $H$ also has an ergodic acim supported on a set of intervals, which is mixing if $\varepsilon < 1-\frac{\sqrt{2}}{2}$ and not mixing if $1-\frac{\sqrt{2}}{2} \leq \varepsilon$.\footnote{There is a countable set of parameters which are literally not treated by Parry's Theorem, yet, in these cases, the map has a Markov partition and it can be concluded directly that it is non-mixing. For instance, if $\varepsilon=1-\frac{\sqrt{2}}{2}$, there exist three intervals with disjoint interiors $I_1,I_2$ and $I_3$ such that $TI_2=I_1\cup I_3$ while $TI_1=TI_3= I_2$. The cases $\eps=1-\frac{ \sqrt[2^n]{2}}{2}$ have analogous behaviour with longer cycles.} Let us denote this measure by $\mu_{H}$ and its density by $\ell$.

 If we think of the phase space of $F_{\varepsilon,2}$ as the unit square with sides identified, by Observation~\ref{obs1} it follows that the circles parallel to the main diagonal are mapped onto each other according to the law of $H$ and $F_{\varepsilon,2}$ acts as the doubling map in the diagonal direction. Hence the original map $F_{\varepsilon,2}$ also has a unique acim $\mu_F$ in the expanding case, which is mixing if $0 \leq \varepsilon < 1-\frac{\sqrt{2}}{2}$.

%

Since $\mu_H$ is not mixing if $1-\frac{\sqrt{2}}{2} \leq \varepsilon$, it is straightforward that $\mu_F$ is not mixing for these values of the coupling parameter either. This concludes our proof of the second statement of the theorem.

 Now for the last statement of the theorem, let $\frac{1}{2} < \varepsilon < 1$. In this case the map $H$ becomes contracting and it is easy to see that the trajectory of every point converges to the origin. So the attractor of $F_{\varepsilon,2}$ will be the diagonal of $\mathbb{T}^2$, on which $F_{\varepsilon,2}$ acts as the doubling map.


 \end{proof}

 Some simulation results are pictured on figure \ref{fig666}. We fixed the value $\varepsilon=\frac{1}{3}$. In the simulation depicted on figure \ref{fig1}, we took 1000 uniformly distributed points on the unit torus and plotted the last 1500 elements of the 2000 long trajectory with respect to the factor mapping $G_{1/3,2}$. We can see that the map acts as the doubling map in the direction of the first coordinate and as a map conjugate to a Lorenz map in the direction of the second coordinate. Using the results of Parry we can calculate that the attractor is the union of two mixing components, one is the central strip, the other is the union of the remaining two strips. We get the attractor of our original system by tiling the plane and cutting out the relevant region as illustrated on figure \ref{fig2}. On figure \ref{fig3} we took 1000 uniformly distributed points on the unit torus and plotted the last 1500 elements of the 2000 long trajectory with respect to the mapping $F_{1/3,2}$.
 \begin{figure}
         \centering
         \begin{subfigure}[b]{0.3\textwidth}
                 \centering
                 \includegraphics[width=\textwidth,height=0.2\textheight]{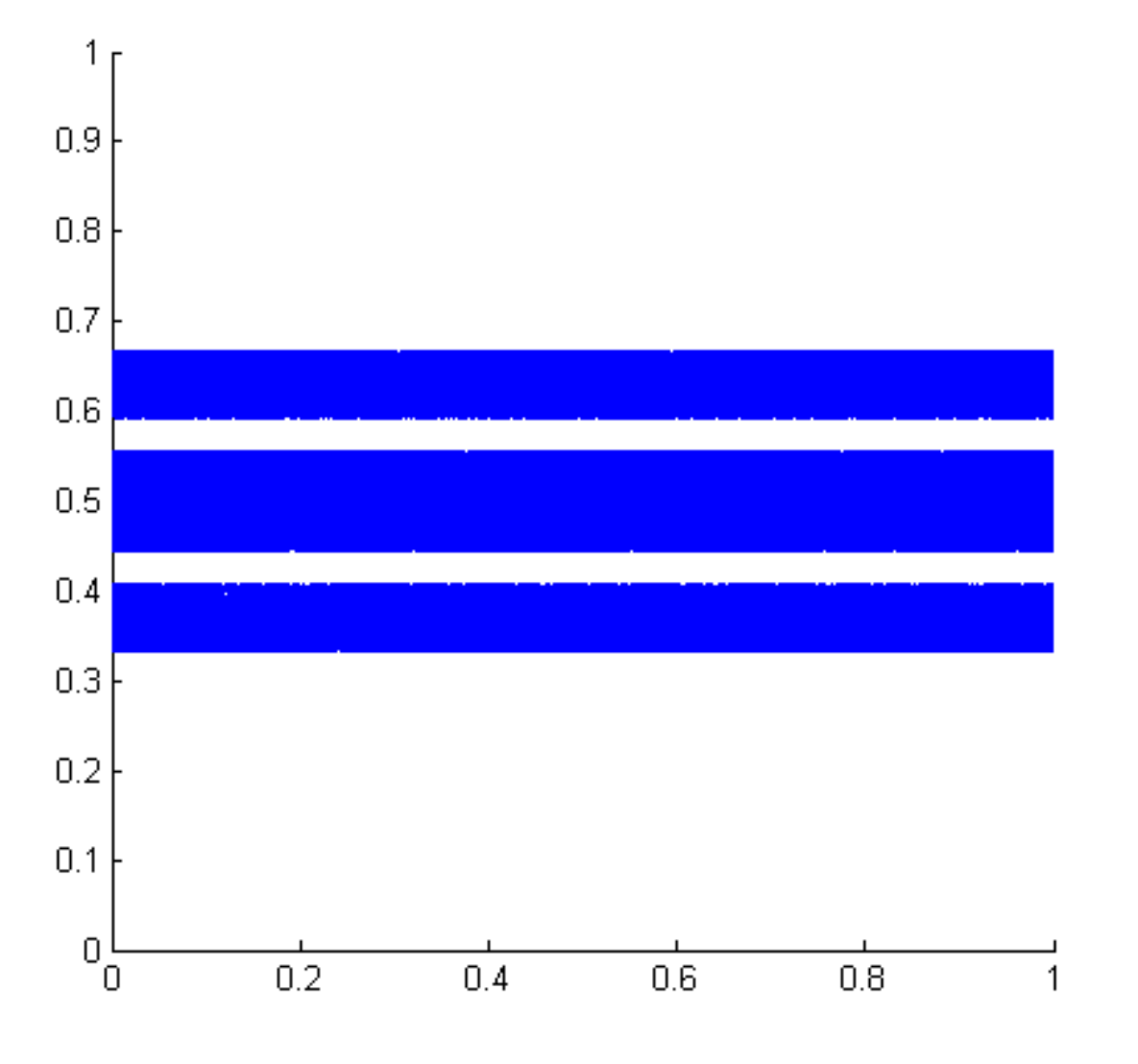}
                 \caption{Simulation of the system with dynamics $G_{1/3,2}$.} \label{fig1}
         \end{subfigure}
         \quad
         \begin{subfigure}[b]{0.3\textwidth}
                 \centering
                 \includegraphics[width=1\textwidth, height=0.2\textheight]{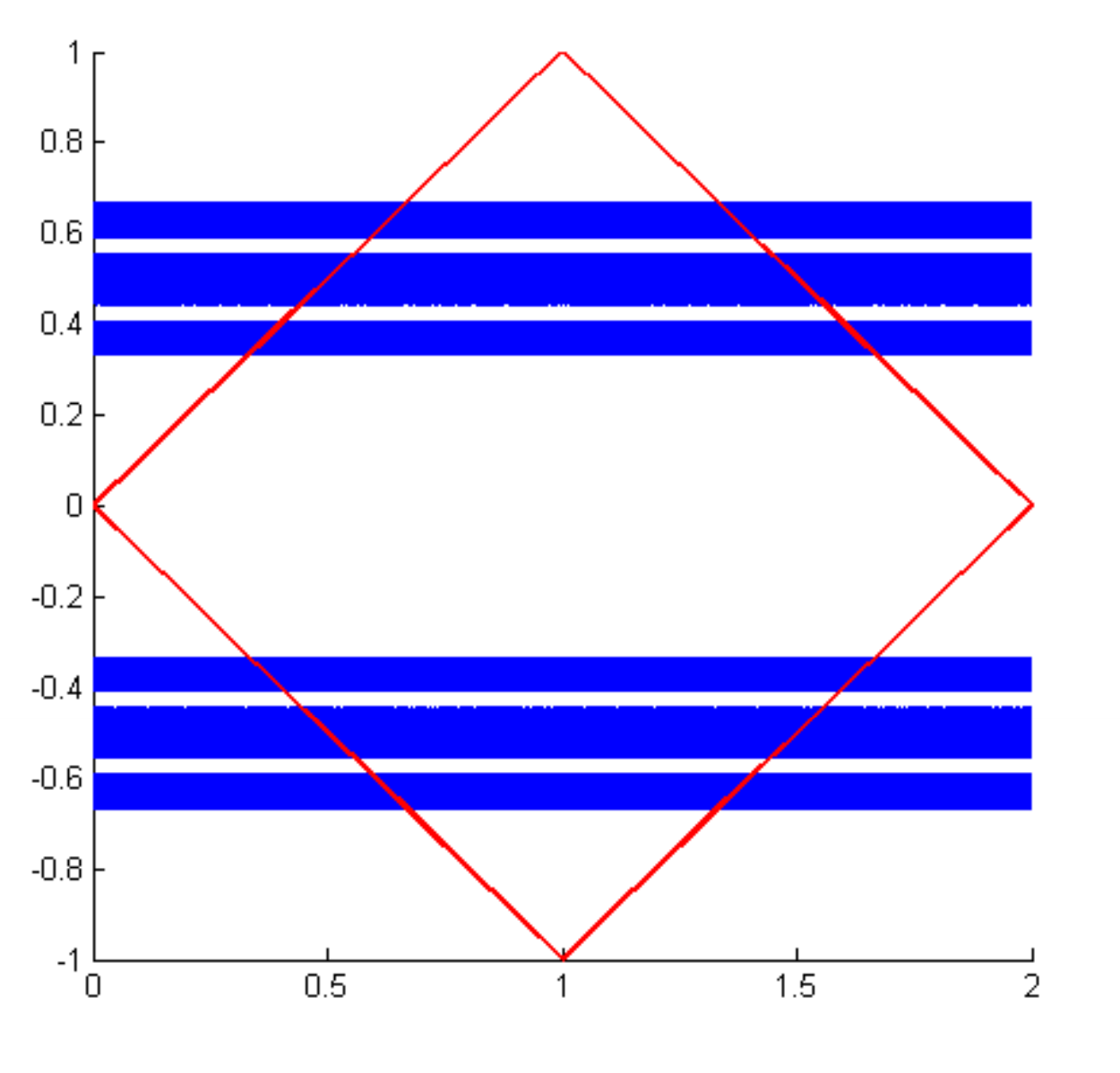}
                 \caption{Tiling of $[0, 2] \times [-1, 1]$ with the phase space of $G_{1/3,2}$.} \label{fig2}
         \end{subfigure}
         \quad
         \begin{subfigure}[b]{0.3\textwidth}
                 \centering
                 \includegraphics[width=\textwidth, height=0.2\textheight]{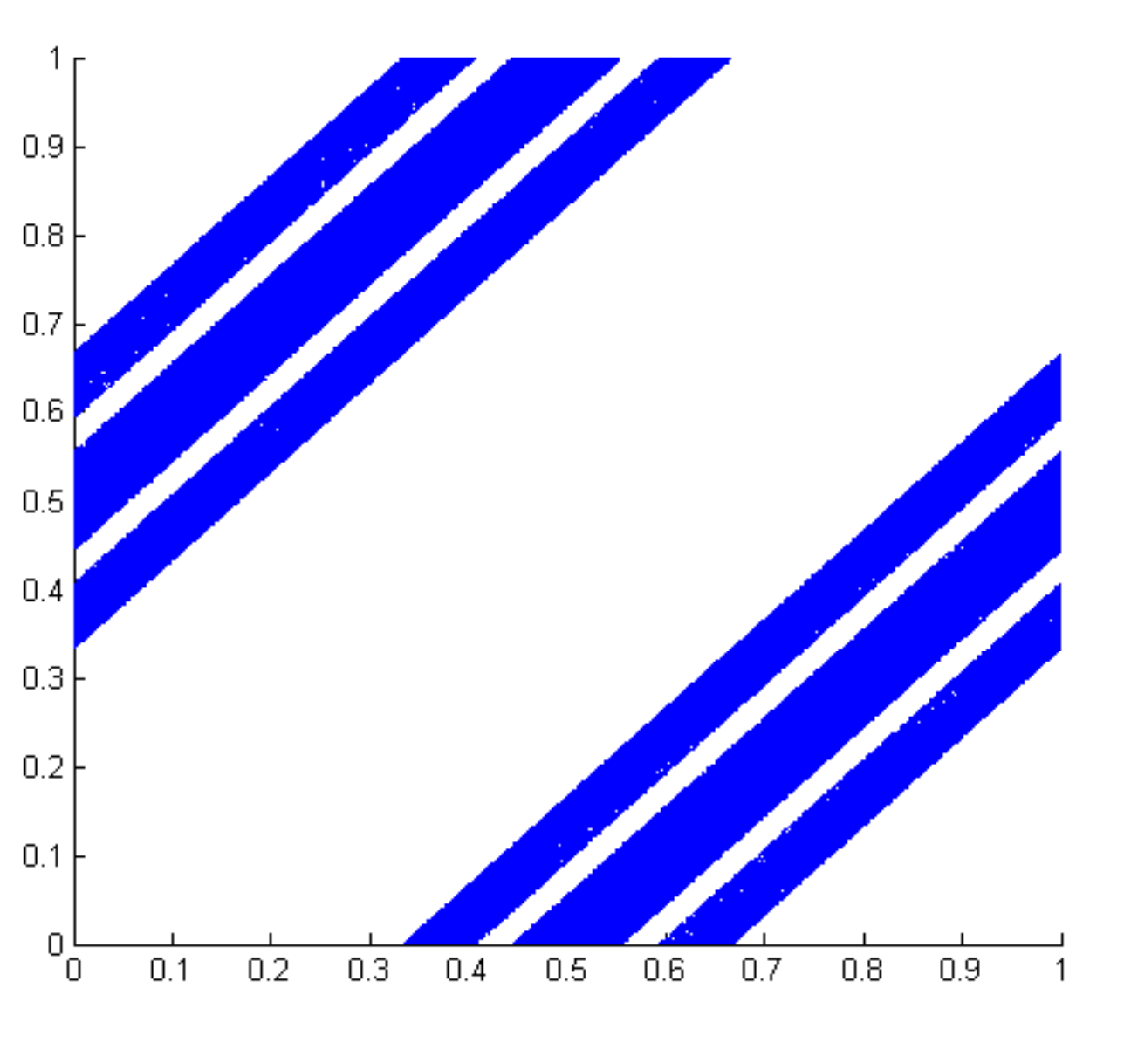}
                 \caption{Simulation of the system with dynamics $F_{1/3,2}$.} \label{fig3}
         \end{subfigure}
         \caption{Simulation of the systems corresponding to the coupling of two sites.} \label{fig666}
 \end{figure}

 We remark that an invariant density is supported on these strips, which is constant in the direction of the diagonal of the square and has a more complicated structure in the orthogonal direction, given by the previously mentioned result of G\'ora \cite{gora2009invariant}.

  Let us, finally, recall Discussion~\ref{d2} and provide some further explanation. In terms of our original dynamical system (evolution of the states of two coupled sites) the results above mean that if the coupling strength $\varepsilon$ increases, certain restrictions arise for the \emph{relative} position of the two sites. In particular, as $\eps$ approaches $\frac12$ from below, the $v$ coordinate is restricted to
 smaller and smaller regions about $\frac12$, which means that the two sites occupy almost opposite locations on the circle. If $\sqrt[2^{n+1}]{2} \leq 2(1-\varepsilon) < \sqrt[2^n]{2}$, the corresponding Lorenz map has $K=K(\varepsilon)=2^n$ mixing components. Accordingly, in course of $K$ iterations, the relative position of the sites keep changing, and then they jump back to an almost opposite position. On the other hand, once $\varepsilon$ becomes larger than $\frac{1}{2}$, the two sites synchronize asymptotically, both sites converge to one point evolving according to the doubling map.

 \subsection{\boldmath$N=3$}

 In this subsection, we are going to prove Theorem~\ref{t3} and provide the background for Discussion~\ref{d3}.

 \begin{proof}[Proof of Theorem \ref{t3}.] In case of three sites, \eqref{mainsing} will take the form
   \begin{align*} F_{\varepsilon,3}(x,y,z)=\bigg(&2x+\frac{2\varepsilon}{3}(g(y-x)+g(z-x)),2y+\frac{2\varepsilon}{3}(g(x-y)+g(z-y)), \\ & 2z+\frac{2\varepsilon}{3}(g(x-z)+g(y-z))\bigg) \quad  \text{mod } 1, \quad x,y,z \in \mathbb{T}.
   \end{align*}

  Let us consider again the factor dynamical system defined by
  \begin{align*}
  G_{\varepsilon,3}(w,u,v)=(G_{\varepsilon,3}^1(w),G_{\varepsilon,3}^2(u,v)),
  \end{align*}

  The coordinates $w,u,v$ correspond to $x+y+z, x-y, y-z$ respectively, and we note that $(x,y,z)$, $\left(x+\frac{1}{3}, y+\frac{1}{3}, z+\frac{1}{3}\right)$ and $\left(x+\frac{2}{3}, y+\frac{2}{3}, z+\frac{2}{3}\right)$ share the same $(w,u,v)$ coordinates.


The phase space of $G_{\varepsilon,3}$ is $\mathbb{T}^3$, but we have to use a careful representation to get a geometrically precise picture. Regarding the original map $F_{\varepsilon,3}$, we most naturally think of the phase space $\mathbb{T}^3$ with coordinates $x,y$ and $z$ as the unit cube of $\mathbb{R}^3$ in the canonical basis. Now we want our new coordinates to be $x+y+z$, $x-y$ and $y-z$. This means our new basis should be $e_1=\frac{1}{3}(1,1,1)$, $e_2=\frac{1}{3}(2,-1,-1)$ and $e_3=\frac{1}{3}(1,1,-2)$. Note that $e_1 $ is perpendicular to the plane of $e_2$ and $e_3$ and the angle of $e_2$ and $e_3$ is $60^{\circ}$.

In conclusion, the phase space of $G_{\varepsilon,3}$ should be represented as a prism with the unit square defined by $e_2$ and $e_3$ as its base (which happens to be a rhombus with angles $60^{\circ}$ and $120^{\circ}$). The direction of $e_1$ needs no special analysis, since $G_{\varepsilon,3}^1$ is the doubling map, so we are going to focus on the map
\begin{align*}
G_{\varepsilon,3}^2(u,v)=\left(2u+\frac{2\varepsilon}{3}(g(v)-g(u+v)-2g(u)), 2v+\frac{2\varepsilon}{3}(g(u)-g(u+v)-2g(v)) \right) &\quad \text{mod 1}, \\
 &\quad (u,v) \in \mathbb{T}^2.
\end{align*}
acting on the rhombus pictured on \ref{figr}.
   \begin{figure}[h!]
   \centering
    \begin{subfigure}[b]{0.45\textwidth}
    \centering
   \includegraphics[scale=0.6]{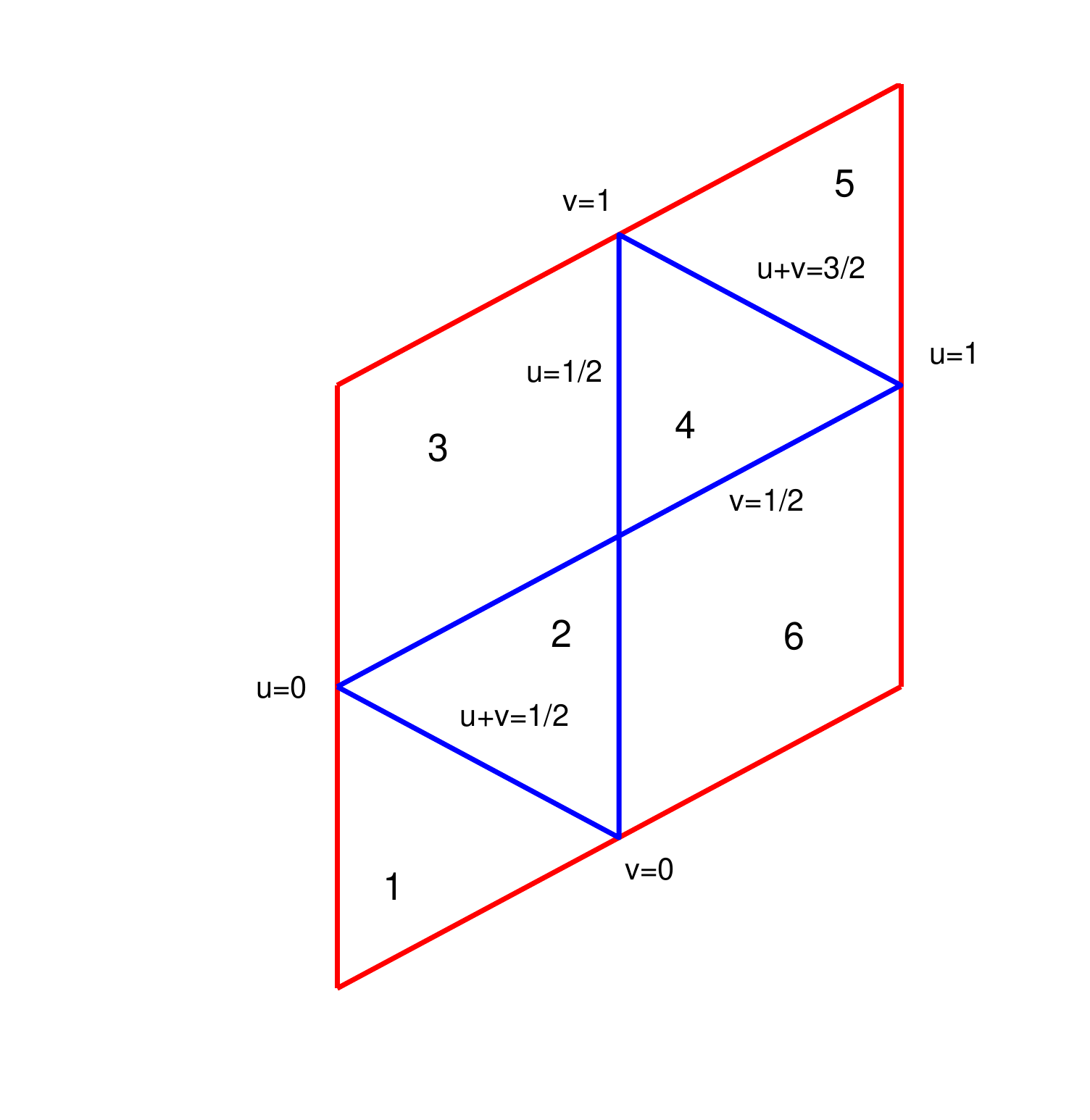}
   \caption{Phase space of $G_{\varepsilon,3}^2$. Blue lines mark the singularities of the map, the map is given by equations \eqref{h1}-\eqref{h6} on each numbered domain.} \label{figr}
   \end{subfigure}
   \qquad
   \begin{subfigure}[b]{0.45\textwidth}
                   \centering
                   \includegraphics[scale=0.65]{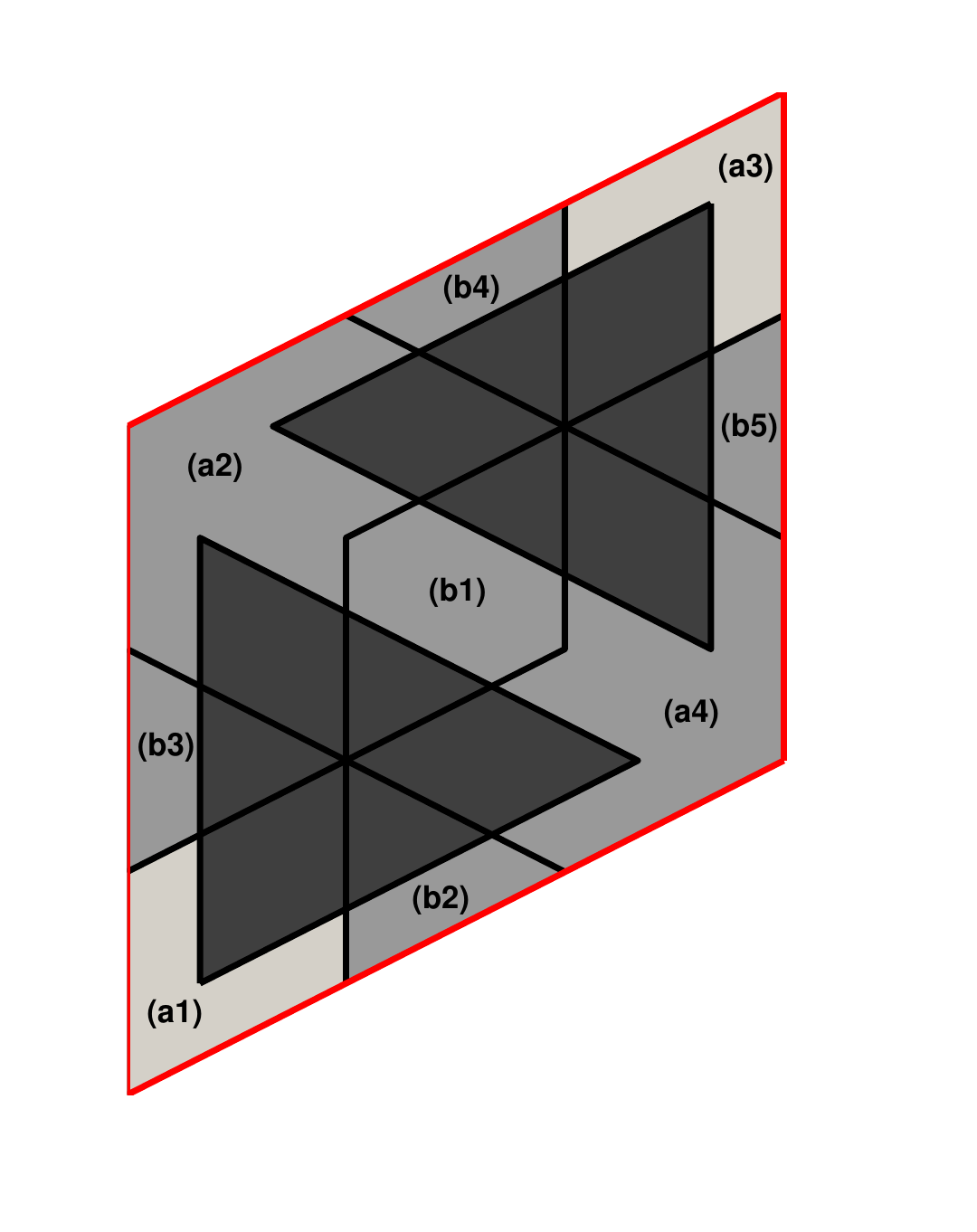}
                   \caption{The image of the six domains from figure \ref{figr} under the action of the map $G_{\varepsilon,3}^2$. Light grey: image of 1,5. Medium grey: image of 3,6. Dark grey: image of 2,4.} \label{figd1}
           \end{subfigure}
   \caption{Phase space of $G_{\varepsilon,3}^2$ and the image of the continuity domains.}
   \end{figure}

 On the six pictured domains (giving three domains of continuity) the map takes the following form:
  \begin{align}
  1. \quad  (u,v) \mapsto & (2(1-\varepsilon)u,2(1-\varepsilon)v) \quad \text{ mod } 1 \label{h1}\\
  2. \quad  (u,v) \mapsto & \left(2(1-\varepsilon)u+\frac{2\varepsilon}{3},2(1-\varepsilon)v+\frac{2\varepsilon}{3}\right) \quad \text{ mod } 1 \\
  3. \quad  (u,v) \mapsto & (2(1-\varepsilon)u,2(1-\varepsilon)v+2\varepsilon) \quad \text{ mod } 1 \\
  4. \quad  (u,v) \mapsto & \left(2(1-\varepsilon)u+\frac{4\varepsilon}{3},2(1-\varepsilon)v+\frac{4\varepsilon}{3}\right) \quad \text{ mod } 1 \\
  5. \quad  (u,v) \mapsto & (2(1-\varepsilon)u+2\varepsilon,2(1-\varepsilon)v+2\varepsilon) \quad \text{ mod } 1 \\
  6. \quad  (u,v) \mapsto & (2(1-\varepsilon)u+2\varepsilon,2(1-\varepsilon)v) \quad \text{ mod } 1 \label{h6}
  \end{align}
   First let $0 \leq \varepsilon < \frac{1}{2}$. In this case the map is expanding, since the Jacobian is $2(1-\varepsilon)I$ on each domain of continuity, where $I$ is the $2 \times 2$ identity matrix.

  The image of domains 1-6 is depicted on figure \ref{figd1}. We can see that the union of domains $(a1), (a2),$ $(a3)$ and $(a4)$ cannot be a part of the Milnor attractor, since the only way a point can get in them is if it was already in them, but points will eventually leave these domains due to expansion. The preimage of the union of domains $(b1)-(b5)$ is contained in itself and the union of $(a1)-(a4)$, hence the union of $(b1)-(b5)$ cannot be part of the attractor either. So the attractor is contained in the images of domains $2$ and $4$, the union of two triangles. But the three corners of each triangle as defined on figure \ref{figd2}, will only have a preimage in the union of  $(a1)-(a4)$ and the union of $(b1)-(b5)$, hence these corners will not be a part of the attractor either. Hence the attractor is contained in the union of two hexagons, pictured on figure \ref{figd2}.

  We now move on to proving the first statement of the theorem. First we note that by expansion, we may again rely on the results of \cite{thomine2010spectral} and \cite{saussol2000absolutely}: for the uniqueness of the absolutely continuous invariant measure, it is enough to show that the map is locally eventually onto (l.e.o. for short). Note, furthermore, that it is enough to show that the factor map $G$ restricted to its attractor is l.e.o., because then by Observation~\ref{obs1} it follows that the original map $F$ restricted to its attractor is also l.e.o. (note that this conclusion would hold for any $N \in \mathbb{N}$). Furthermore, it will suffice to prove that the map $G_{\varepsilon,3}^2$ restricted to the two hexagons is l.e.o. if $\varepsilon < 1-\frac{\sqrt{2}}{2}$ (because the doubling map $G_{\varepsilon,3}^1$ also has this property).

  To prove this, let us take a small set on the attractor. First we prove that this set will have an image which intersects the diagonal $u=v$. To see this, notice that if two or more singularity lines cross this set, then the set already crosses the diagonal. Hence without loss of generality  we may assume that either zero or one singularity line crosses this set. If no singularity crosses the set, then its area will grow with a factor of $[2(1-\varepsilon)]^2 >1$. If one singularity crosses the set, then the image will have two pieces and one will have an area larger than $\frac{[2(1-\varepsilon)]^2}{2}$ times the area of the original set, which factor is also larger then 1 if $\varepsilon < 1-\frac{\sqrt{2}}{2}$. Hence the image of the set grows, but that cannot go on forever, and eventually it will have to be cut by two singularities, but then it necessarily crosses the diagonal.

  The dynamics restricted to $u=v$ is
    \[
    H(u)=2u-\frac{2\varepsilon}{3}(g(u)+g(2u)) \mod 1.
    \]
 Recall that $\sigma=\frac{2\varepsilon}{3}(2-\varepsilon)$, as defined in the caption of figure~\ref{figd2}.  The intersection of the two hexagons and the diagonal $u=v$ is the subset of the diagonal such that
  \[
  u \in \left[\frac{\sigma}{2}, \frac{1}{2}-\frac{\varepsilon}{6} \right] \cup \left[\frac{1}{2}+\frac{\varepsilon}{6}, 1-\frac{\sigma}{2} \right],
  \]

  Let us denote this subset of the diagonal by $\kappa$. Fernandez \cite{fernandez2014breaking} proved that $H$ restricted to $\kappa$ is locally eventually onto, hence our set will have an image which contains a small neighborhood of $\left(\frac{1}{3},\frac{1}{3}\right)$, and it is easy to see that this small neighborhood will have an image which covers the intersection of domain 2 and the attractor. The image of this set is the upper hexagon, and the image of the upper hexagon covers the lower hexagon, hence we are done.

  Now we move on to breaking of ergodicity, that is, we are going to show that if $\varepsilon$ is above a certain threshold value, the Milnor attractor
   consists of multiple invariant sets, hence it supports multiple ergodic invariant measures.

 Let us include the lines
  \[
  v=u, \quad v=-\frac{1}{2}u+1, \quad  v=-2u+1, \quad v=-2u+2, \quad v=-\frac{1}{2}u+\frac{1}{2} \quad \text{and} \quad v=1-u
  \] on our picture, see the orange and the green lines on figure \ref{figd3}. The orange line separates the two hexagons, and
  the green lines partition both hexagons into six domains $((Ix),(IIx),\dots,(VIx)$ for $x=a,b)$ which can be paired up to get the domains $I,II,\dots,VI$.

  We are going to show that each such domain is invariant if $\varepsilon$ is large enough. We will give a detailed proof for domain $I$, the proof is analogous for the remaining five domains.

  \begin{figure}[h!]
          \centering
          \begin{subfigure}[b]{0.45\textwidth}
                  \centering
                  \includegraphics[scale=0.6]{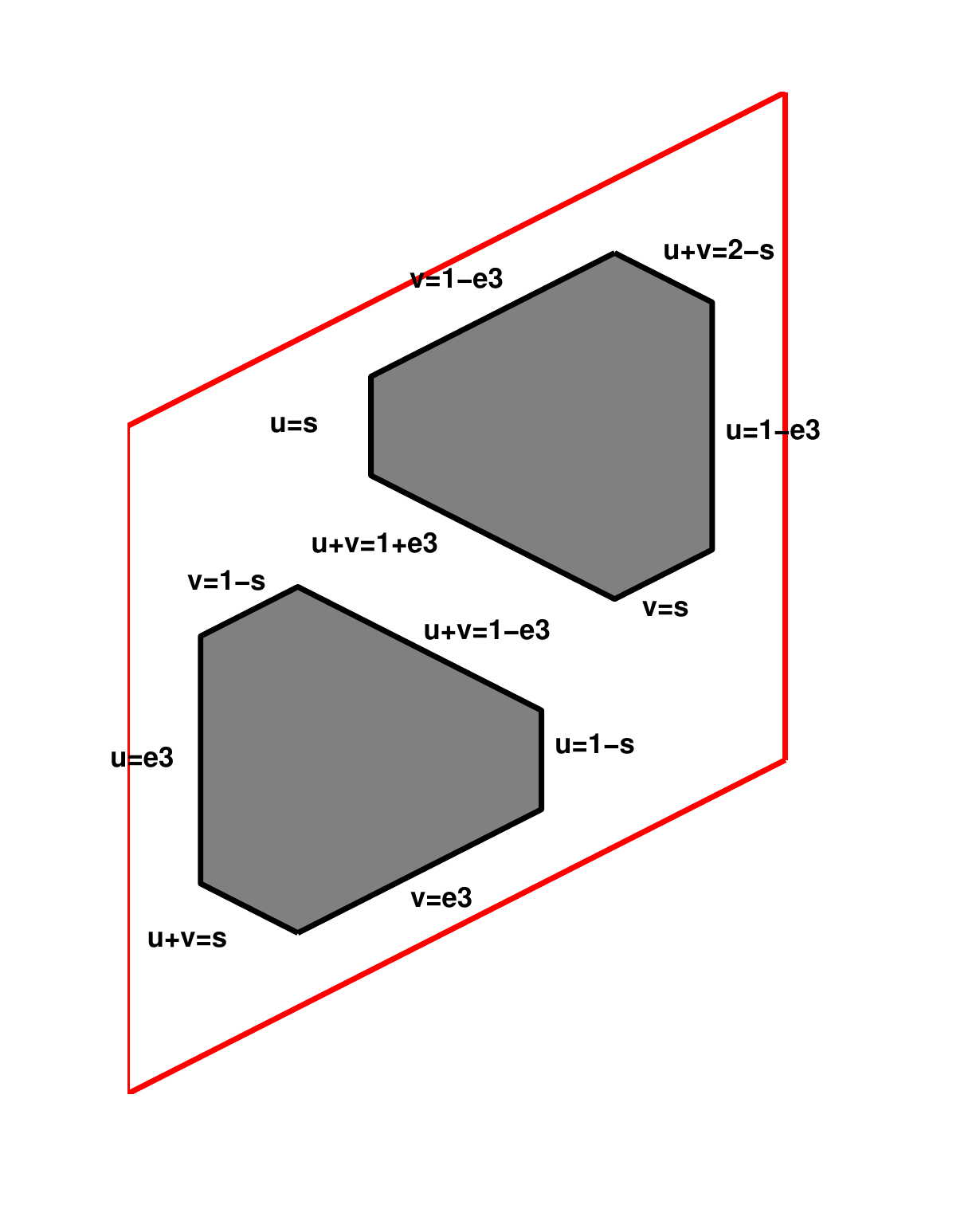}
                  \caption{If $\varepsilon \leq \frac{1}{3}$, the Milnor attractor is exactly the union of these two hexagons, otherwise it is contained in them.  We used the notation $\frac{2\varepsilon}{3}(2-\varepsilon)=\sigma$.} \label{figd2}
          \end{subfigure}
          \qquad
          \begin{subfigure}[b]{0.45\textwidth}
                  \centering
                  \includegraphics[scale=0.6]{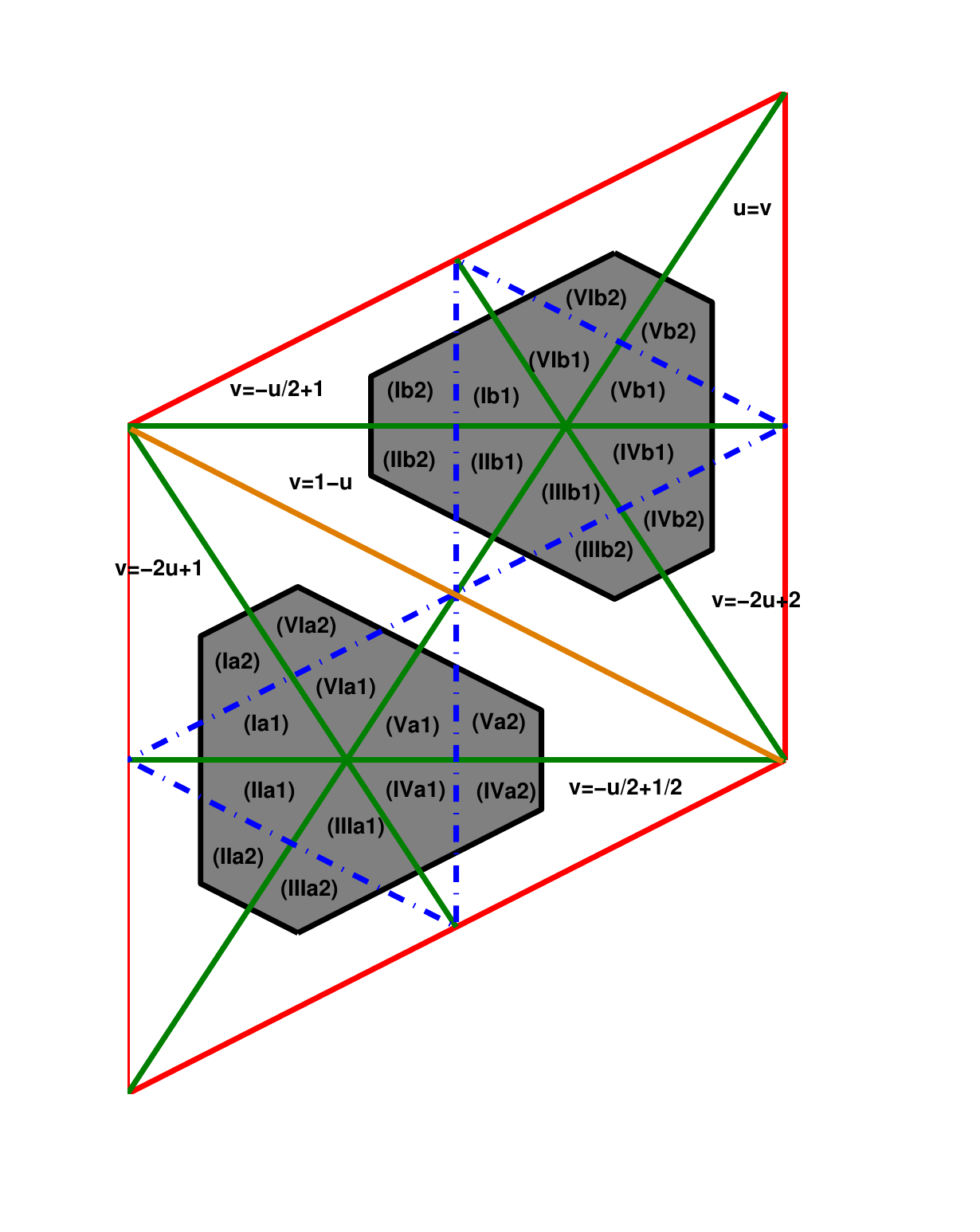}
                  \caption{We get the domains $I,II,\dots,VI$ the following way: the union of $(Wx1)$ and $(Wx2)$ is $(Wx)$ for $x=a,b$, $W=I,II,\dots,VI$, and the union of $(Wa)$ and $(Wb)$ is $W$ for $W=I,II,\dots,VI$.} \label{figd3}
          \end{subfigure}
  \caption{Structure of the Milnor attractor.}
  \end{figure}

   Domain $(Ia)$ consists of a quadrilateral $(Ia1)$ in continuity domain $2$ and a quadrilateral $(Ia2)$ in continunity domain $3$. Similarly, domain $(Ib)$ consists of a quadrilateral $(Ib1)$ in continuity domain $4$ and a quadrilateral $(Ib2)$ in continunity domain $3$.

   Let us first deal with $(Ia1)$ and $(Ib1)$. The four sides of $(Ia1)$, namely $v=-\frac{1}{2}u+\frac{1}{2}$, $v=-2u+1$, $v=\frac{1}{2}$ and $u=\frac{\varepsilon}{3}$ map under the action of $G_{\varepsilon,3}^2$ in continuity domain 2, respectively, into the lines $v=-\frac{1}{2}u+1$, $v=-2u+2$, $v=1-\frac{\varepsilon}{3}$ and $u=\sigma$. So we see that the image of $(Ia1)$ is precisely $(Ib)$. In a similar fashion, the four sides of $(Ib1)$, namely $v=-\frac{1}{2}u+1$, $v=-2u+2$, $u=\frac{1}{2}$ and $v=1-\frac{\varepsilon}{3}$ map under the action of $G_{\varepsilon,3}^2$ in continuity domain 4, respectively, into the lines $v=-\frac{1}{2}u+\frac{1}{2}$, $v=-2u+1$, $u=\frac{\varepsilon}{3}$ and $u=1-\sigma$. Hence the image of $(Ib1)$ is precisely $(Ia)$.

   Now let us deal with $(Ia2)$ and $(Ib2)$. If these are contained in $(Ia)$ and $(Ib)$ respectively, the domain $I$ is invariant. Let us look at $(Ia2)$ first. Under the action of our map on continuity domain $3$, the $u=\frac{\varepsilon}{3}$  border will move towards the interior of domain $I$ and the border $v=-2u+1$ is invariant, hence the image of $(Ia2)$ can only leave $I$ if at least one of its two corners $\left( \frac{\varepsilon}{3},\frac{1}{2}\right)$ or $\left( \frac{1}{4},\frac{1}{2}\right)$ pass the line $v=-\frac{1}{2}u+\frac{1}{2}$. The image of $\left( \frac{\varepsilon}{3},\frac{1}{2}\right)$ is $\left( 2(1-\varepsilon)\frac{\varepsilon}{3},\varepsilon \right)$, and the picture of $\left( \frac{1}{4},\frac{1}{2}\right)$ is $\left( \frac{1}{2}(1-\varepsilon),\varepsilon\right)$. Hence the image of $(Ia2)$ is in $I$ if and only if
  \begin{align*}
  &-\frac{1}{2}\cdot \frac{1}{2}(1-\varepsilon)+\frac{1}{2} \leq \varepsilon \Leftrightarrow \frac{1}{3} \leq \varepsilon, \\
  &-\frac{\varepsilon}{3}(1-\varepsilon)+\frac{1}{2} \leq \varepsilon \Leftrightarrow \frac{4-\sqrt{10}}{2} \leq \varepsilon.
  \end{align*}
  The argument is very much the same for domain $(Ib2)$. Under the action of our map on continuity domain $3$, the $v=1-\frac{\varepsilon}{3}$  border will move towards the interior of domain $I$ and the border $v=-\frac{1}{2}u+1$ is invariant, hence the image of $(Ib2)$ can only leave $I$ if one of its two corners $\left( \frac{1}{2},1-\frac{\varepsilon}{3}\right)$ or $\left( \frac{1}{2},\frac{3}{4}\right)$ pass the line $v=-2u+2$. The image of $\left( \frac{1}{2},1-\frac{\varepsilon}{3}\right)$ is $\left( 1-\varepsilon,2(1-\varepsilon)\left(1-\frac{\varepsilon}{3}\right)+2\varepsilon-1 \right)$, and the image of $\left( \frac{1}{2},\frac{3}{4}\right)$ is $\left( 1-\varepsilon, \frac{1}{2}(1+\varepsilon)\right)$. Hence the image of $(Ib2)$ is in $I$ if and only if
  \begin{align*}
  &1-\varepsilon \leq 1-\frac{1}{4}(1+\varepsilon) \Leftrightarrow \frac{1}{3} \leq \varepsilon, \\
  &1-\varepsilon \leq 1-(1-\varepsilon)\left(1-\frac{\varepsilon}{3}\right)-\varepsilon+\frac{1}{2} \Leftrightarrow \frac{4-\sqrt{10}}{2} \leq \varepsilon.
  \end{align*}
  In conclusion is $I$ is invariant if and only if $ \frac{4-\sqrt{10}}{2} \leq \varepsilon $.

  The proof that domains $II-VI$ are invariant if $ \frac{4-\sqrt{10}}{2} \leq \varepsilon $ is completely analogous. Hence at least six invariant sets exist if $ \frac{4-\sqrt{10}}{2} \leq \varepsilon $.

   So if $\frac{4-\sqrt{10}}{2} \leq \varepsilon $, the system of $G_{\varepsilon,3}^2$ has multiple ergodic components, and then the system of $G_{\varepsilon,3}$ must also have multiple ergodic components. In conclusion $F_{\varepsilon,3}$  must have multiple (at least six) ergodic components if $ \frac{4-\sqrt{10}}{2} \leq \varepsilon $. Based on simulations we conjecture that if $0 < \varepsilon < \frac{4-\sqrt{10}}{2}$, the system is ergodic and if $\frac{4-\sqrt{10}}{2} \leq \varepsilon < \frac{1}{2}$ the system has precisely six ergodic components.

  Now let us move on to the case $\frac{1}{2} < \varepsilon < 1$, concerning the last statement of our theorem. In this case the map $G_{\varepsilon,3}^2$ becomes contracting, and points from domains $1,3,5$ and $6$ converge to the origin. Points from domains $2$ and $4$ either end up in the origin or on the periodic orbit of length two consisting of $\left(\frac{1}{3},\frac{1}{3}\right)$ and $\left(\frac{2}{3},\frac{2}{3}\right)$. Hence the attractor of $G_{\varepsilon,3}$ is the union of an invariant circle and two circles which are the images of each other. Now consider the phase space of $G_{\varepsilon,3}$  as a three dimensional domain, a prism with rhomboidal base in the plane $x+y+z=0$. If we tile $\mathbb{R}^3$ by copies of this three dimensional domain, and take the intersection of the unit cube with this tiling, we see that the attractor in the unit cube (the phase space of $F_{\varepsilon,3}$) is the union of three circles. The invariant circle is $(x,x,x)$, $x \in [0, 1]$, the second circle can be parametrized as
  \begin{align*}
  \left(x,x+\frac{2}{3},x+\frac{1}{3}\right) &\text{ if } x \in \left[0, \frac{1}{3}\right], \\
  \left(x,x-\frac{1}{3},x+\frac{1}{3}\right) &\text{ if } x \in \left[\frac{1}{3}, \frac{2}{3}\right], \\
  \left(x,x-\frac{1}{3},x-\frac{2}{3}\right) &\text{ if } x \in \left[ \frac{2}{3},1\right],
  \end{align*}
  and the third one can be parametrized as
  \begin{align*}
  \left(x,x+\frac{1}{3},x+\frac{2}{3}\right) &\text{ if } x \in \left[0, \frac{1}{3}\right], \\
  \left(x,x+\frac{1}{3},x-\frac{1}{3}\right) &\text{ if } x \in \left[\frac{1}{3}, \frac{2}{3}\right], \\
  \left(x,x-\frac{2}{3},x-\frac{1}{3}\right) &\text{ if } x \in \left[ \frac{2}{3},1\right].
  \end{align*}
 By the identifications of the opposite sides of the unit cube, circles are obtained as the union of these intervals. We note that Koiller and Young already obtained this result in \cite{koiller2010coupled}.
 \end{proof}

 \begin{figure}
          \centering
          \begin{subfigure}[b]{0.3\textwidth}
                          \centering
                          \includegraphics[width=\textwidth,height=0.25\textheight]{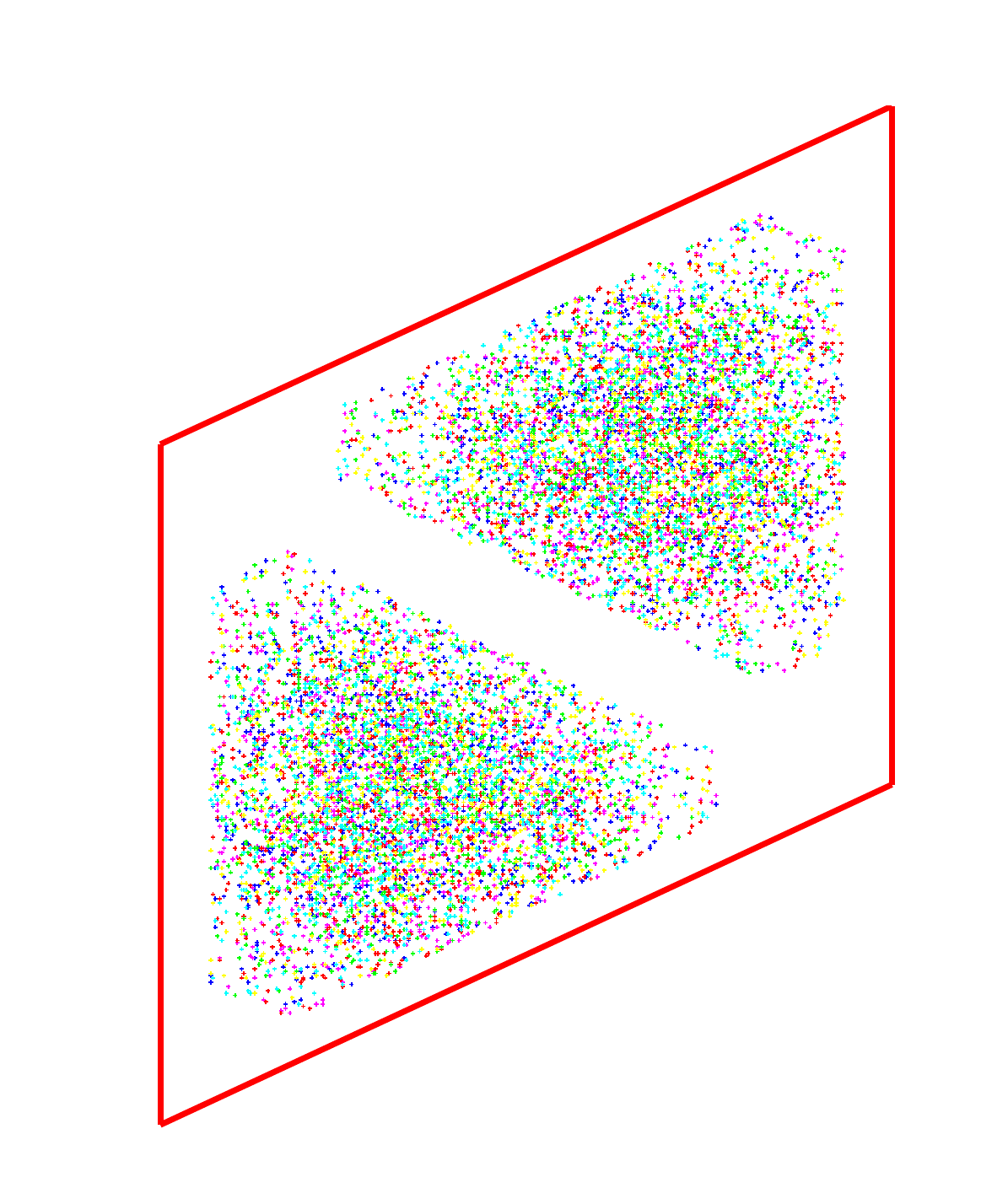}
                          \caption{$\varepsilon=0.2$, Simulation of $G_{\varepsilon,3}^2$.} \label{figc1}
                  \end{subfigure}
                  \begin{subfigure}[b]{0.34\textwidth}
                          \centering
                          \includegraphics[width=\textwidth, height=0.22\textheight]{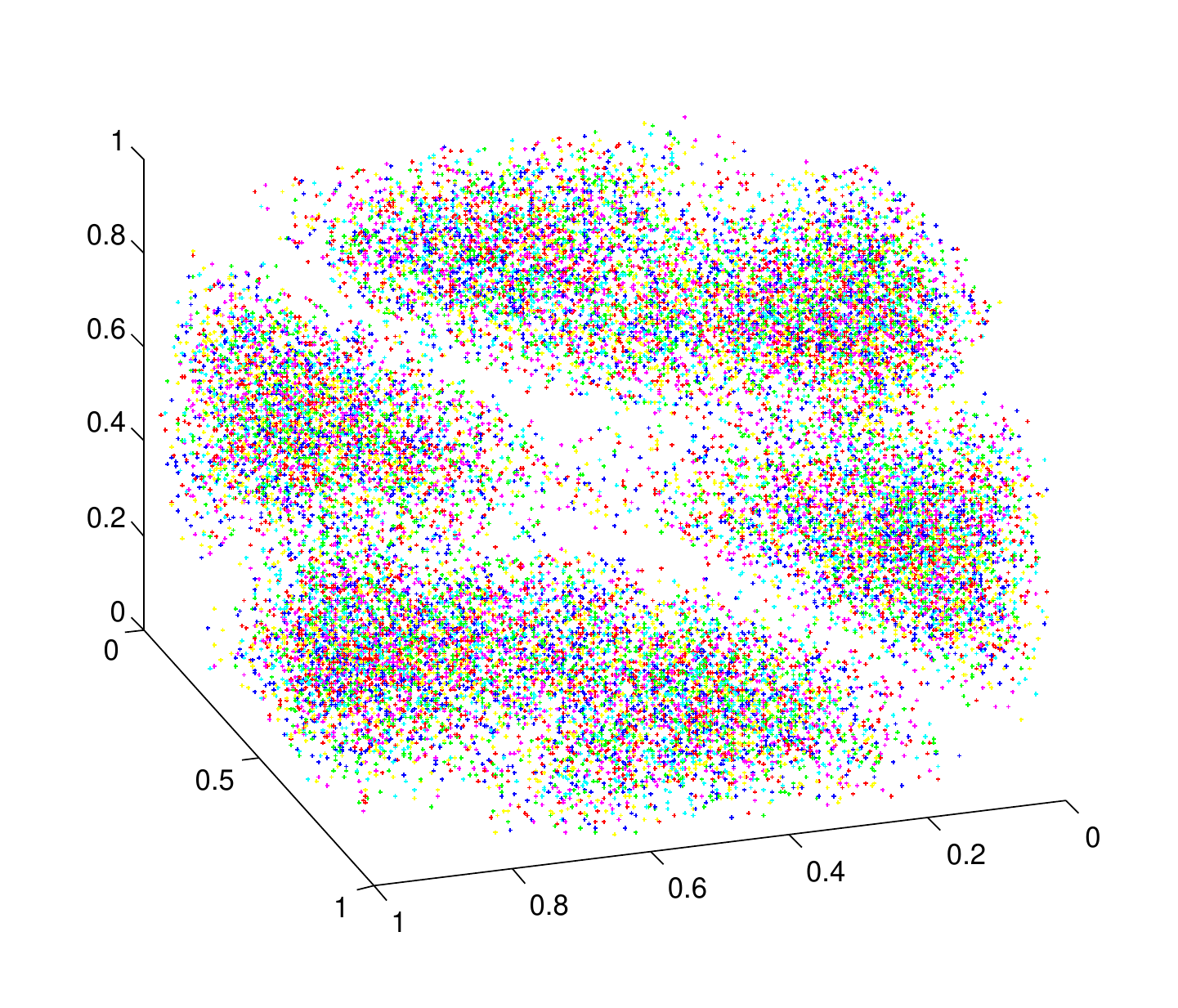}
                          \caption{$\varepsilon=0.2$, Simulation of $F_{\varepsilon,3}$.} \label{figc2}
                  \end{subfigure}
                  \begin{subfigure}[b]{0.34\textwidth}
                          \centering
                          \includegraphics[width=\textwidth, height=0.22\textheight]{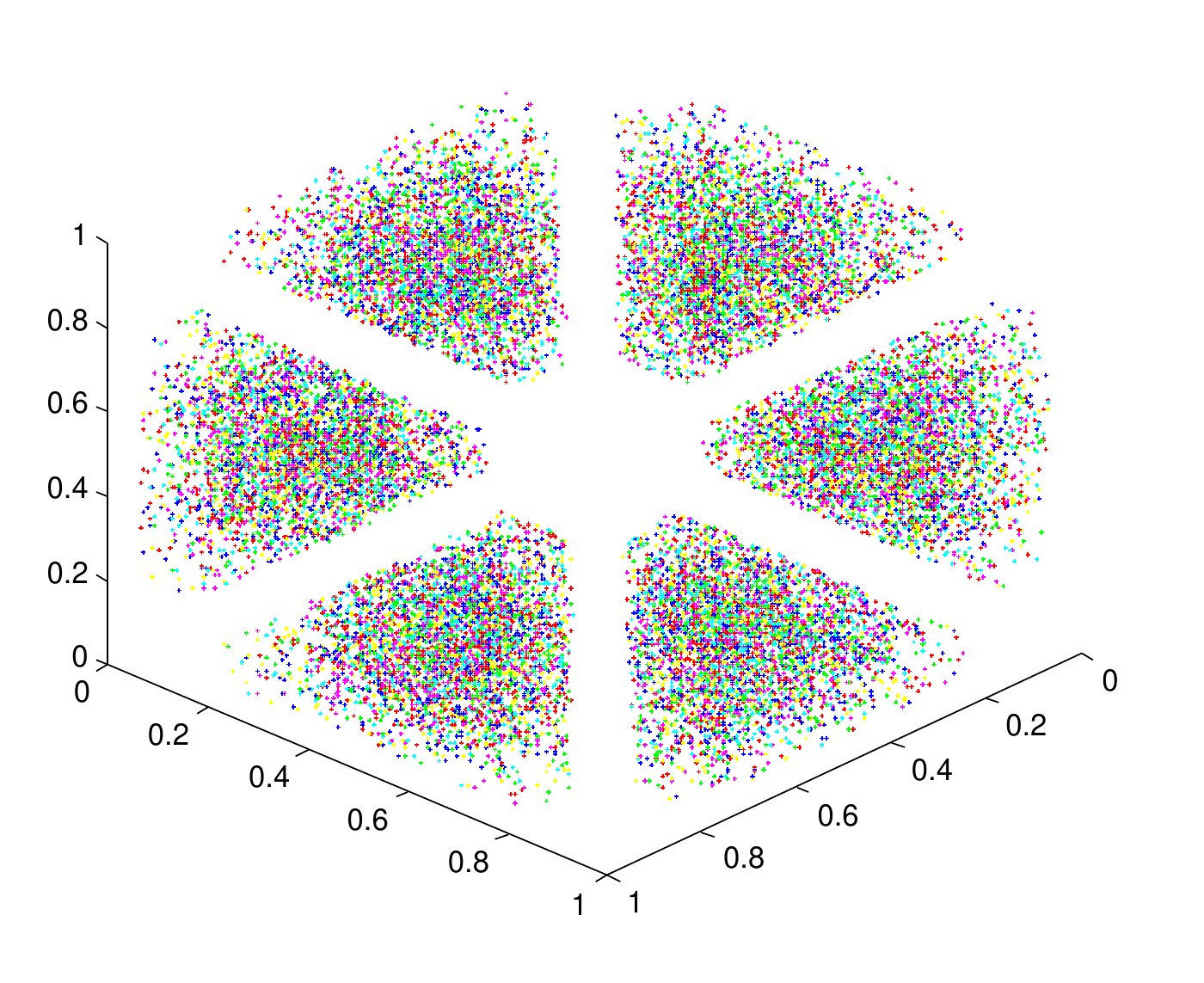}
                          \caption{$\varepsilon=0.2$, Simulation of $F_{\varepsilon,3}$.} \label{figc3}
                  \end{subfigure}
          \begin{subfigure}[b]{0.3\textwidth}
                  \centering
                  \includegraphics[width=\textwidth,height=0.25\textheight]{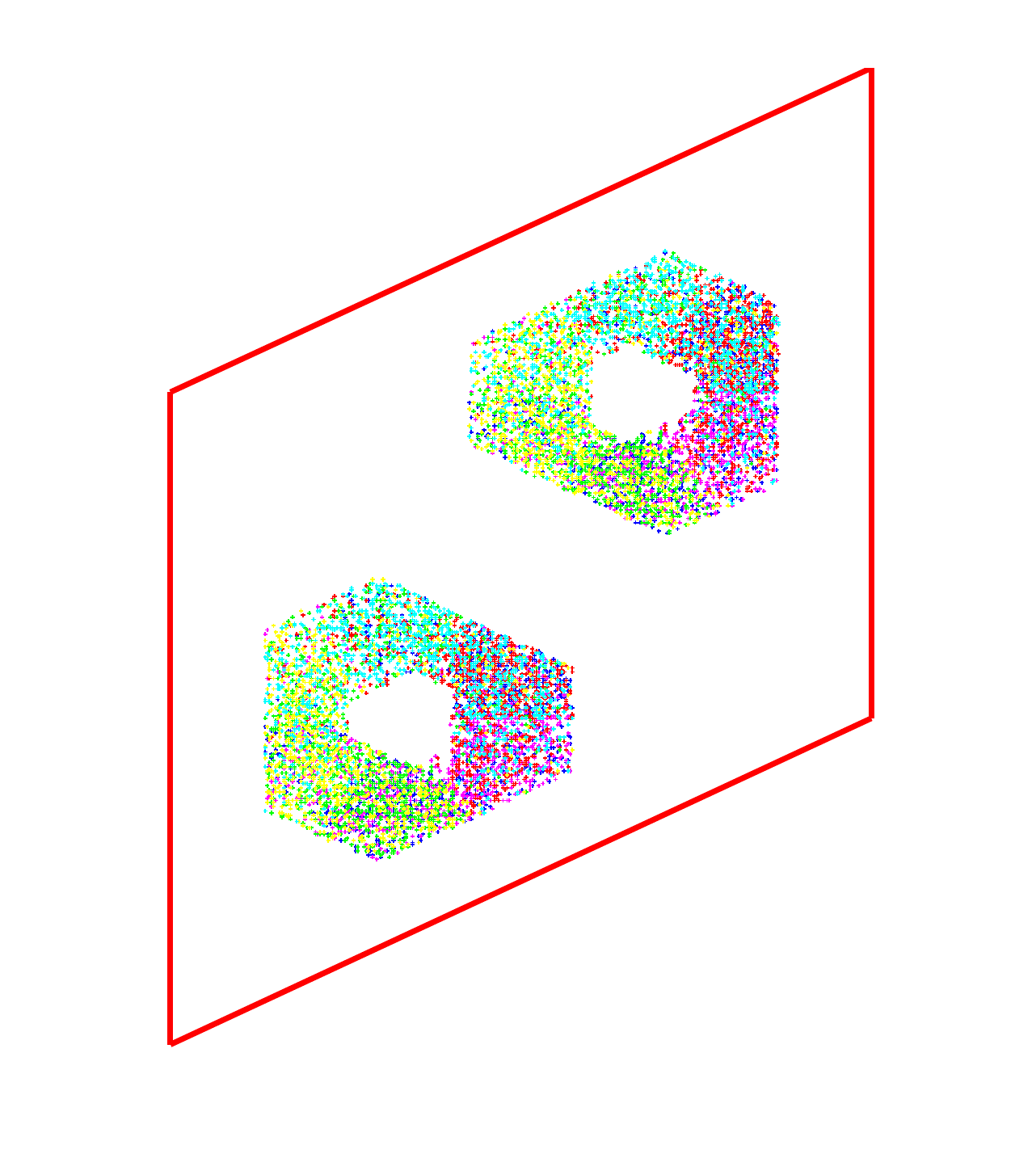}
                  \caption{$\varepsilon=0.4$, Simulation of $G_{\varepsilon,3}^2$.} \label{figa1}
          \end{subfigure}
          \begin{subfigure}[b]{0.34\textwidth}
                  \centering
                  \includegraphics[width=\textwidth, height=0.22\textheight]{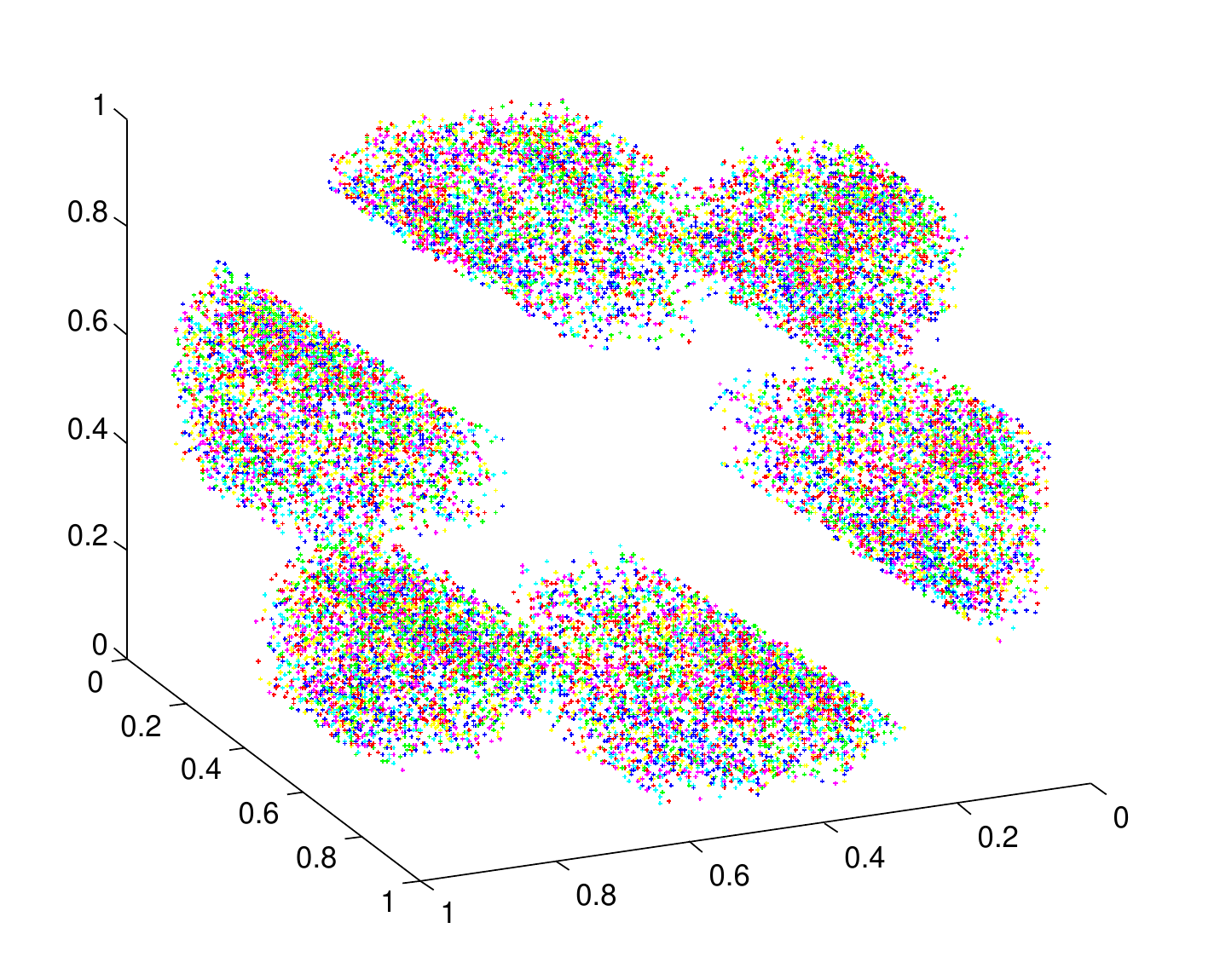}
                  \caption{$\varepsilon=0.4$, Simulation of $F_{\varepsilon,3}$.} \label{figa2}
          \end{subfigure}
          \begin{subfigure}[b]{0.34\textwidth}
                  \centering
                  \includegraphics[width=\textwidth, height=0.22\textheight]{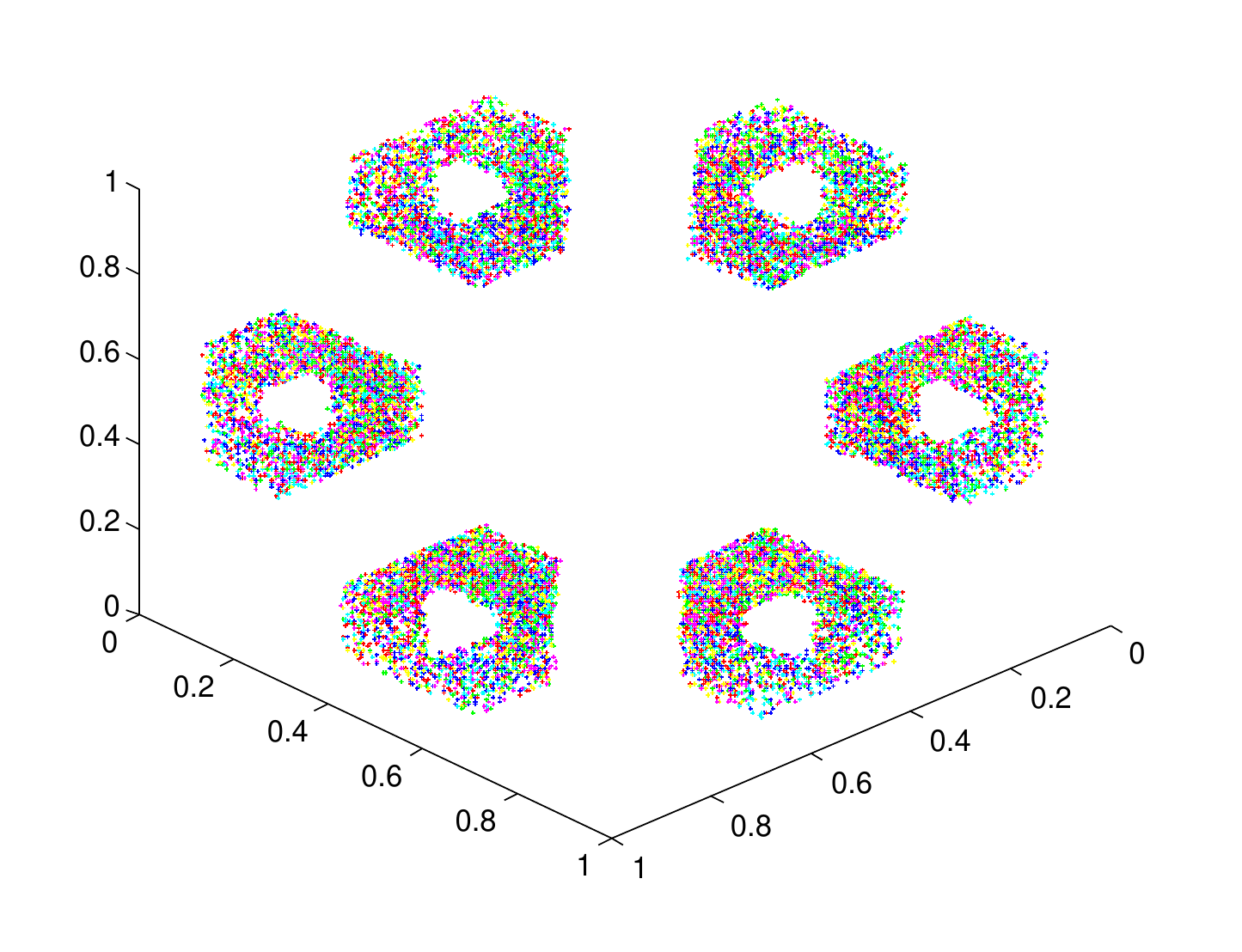}
                  \caption{$\varepsilon=0.4$, Simulation of $F_{\varepsilon,3}$.} \label{figa3}
          \end{subfigure}
          \begin{subfigure}[b]{0.3\textwidth}
                          \centering
                          \includegraphics[width=\textwidth,height=0.25\textheight]{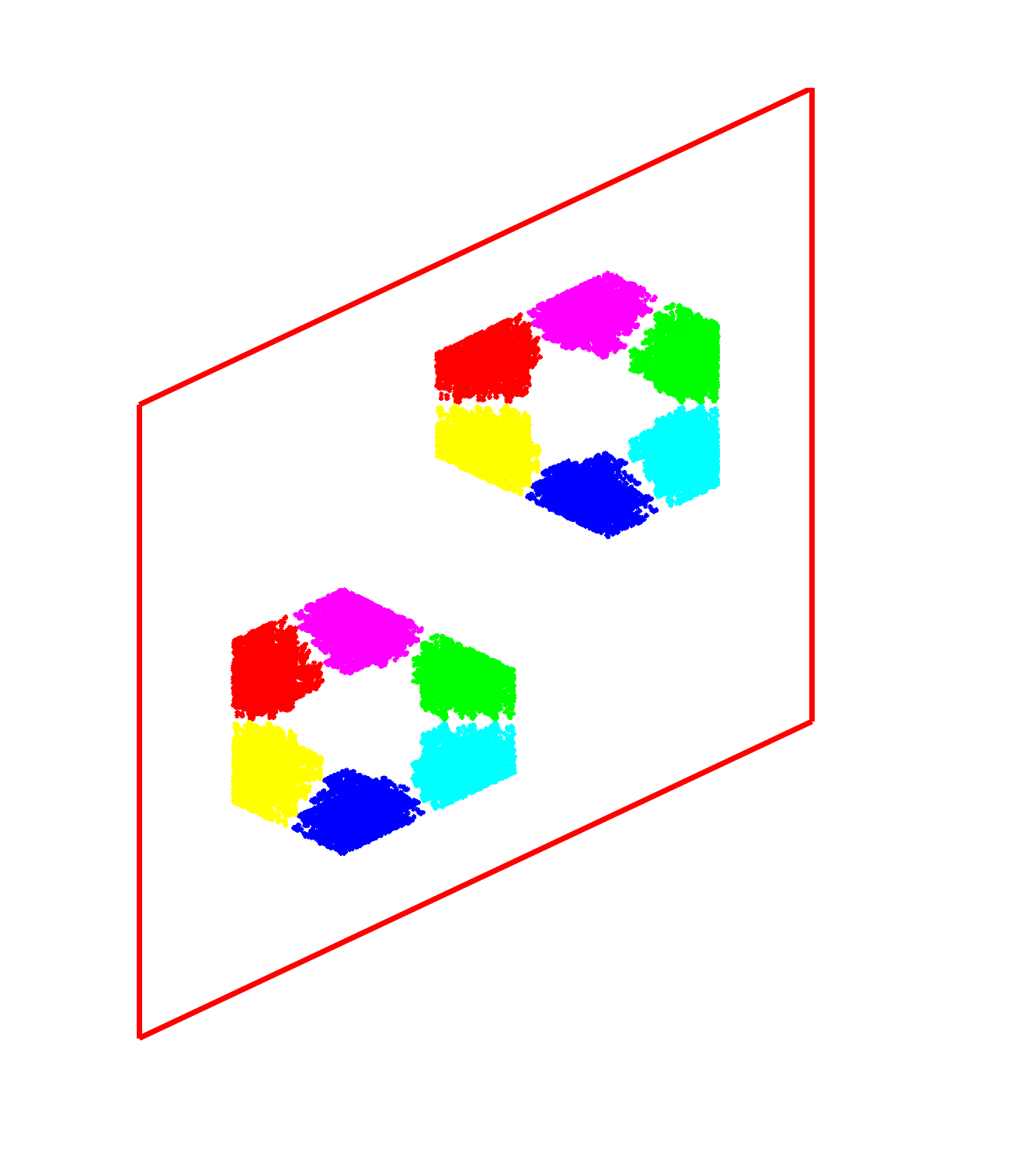}
                          \caption{$\varepsilon=0.42$, Simulation of $G_{\varepsilon,3}^2$.} \label{figb1}
                  \end{subfigure}
                  \begin{subfigure}[b]{0.34\textwidth}
                          \centering
                          \includegraphics[width=\textwidth, height=0.22\textheight]{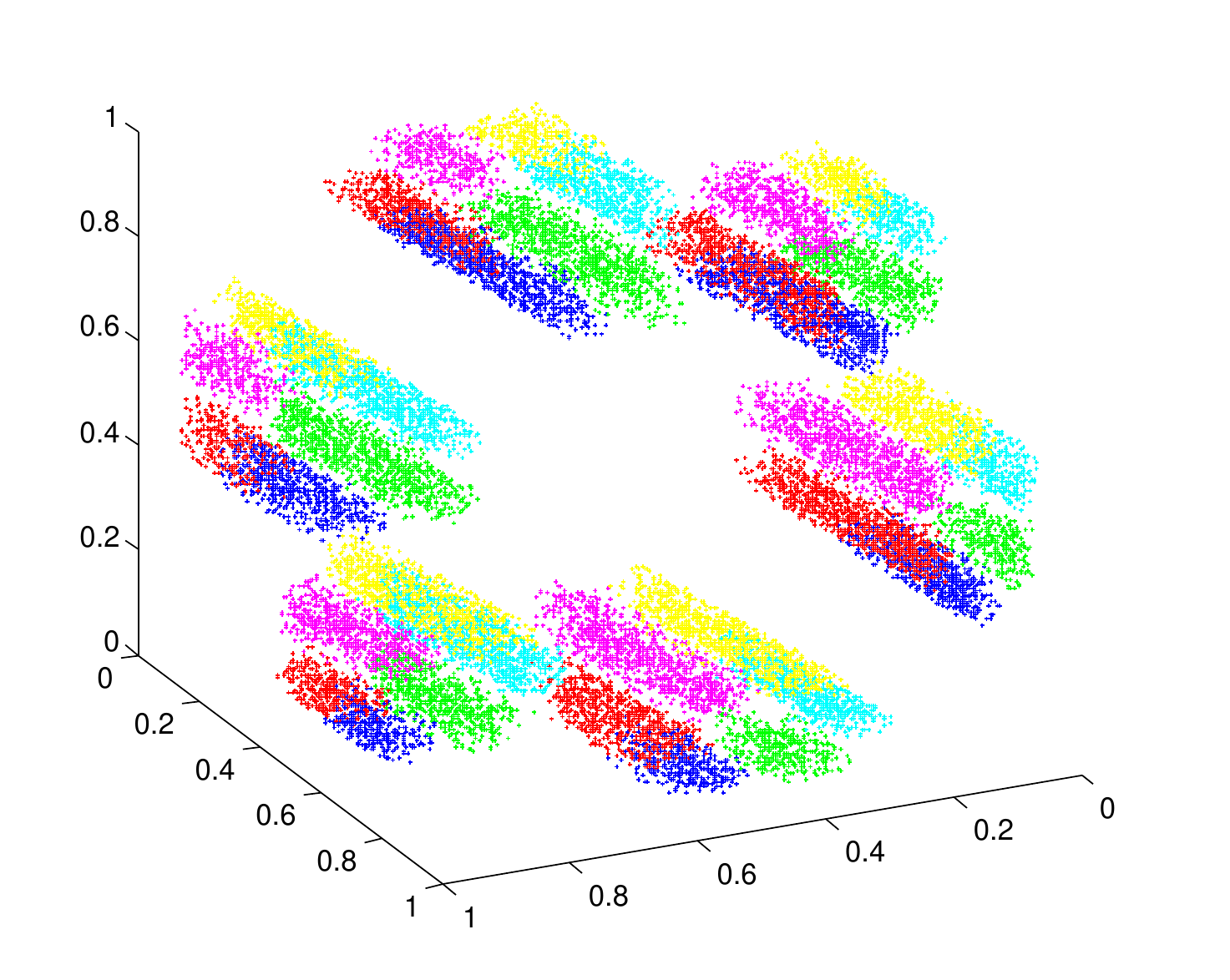}
                          \caption{$\varepsilon=0.42$, Simulation of $F_{\varepsilon,3}$.} \label{figb2}
                  \end{subfigure}
                  \begin{subfigure}[b]{0.34\textwidth}
                          \centering
                          \includegraphics[width=\textwidth, height=0.22\textheight]{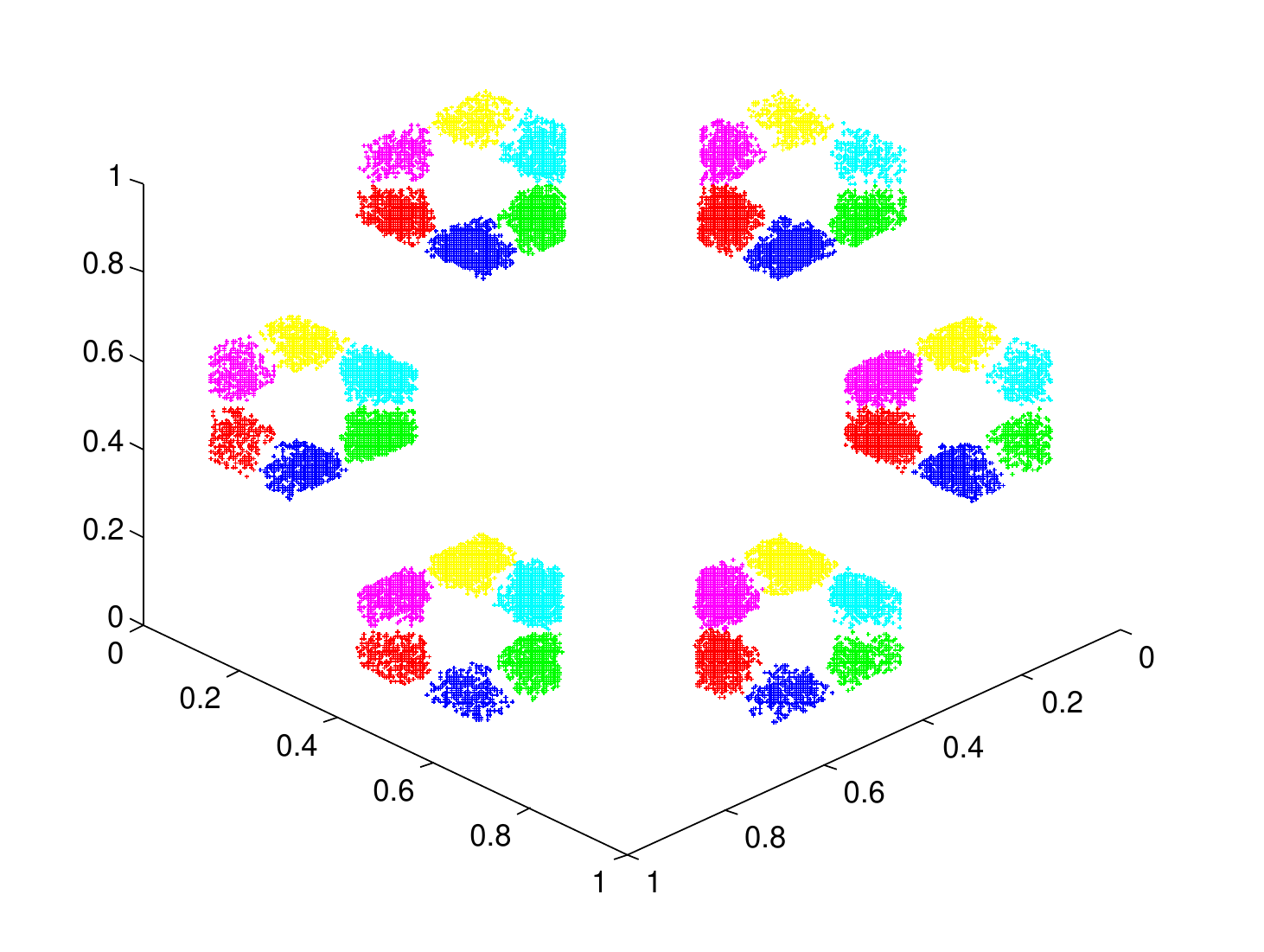}
                          \caption{$\varepsilon=0.42$, Simulation of $F_{\varepsilon,3}$.} \label{figb3}
                  \end{subfigure}
          \caption{Simulations of the systems governed by $F_{\varepsilon,3}$ and $G_{\varepsilon,3}^2$} \label{nagy}
  \end{figure}

  On figure \ref{nagy} we plotted simulations of the two dimensional component of the factor map $G_{\varepsilon,3}^2$ and the original map $F_{\varepsilon,3}$. On figures \ref{figc1}, \ref{figa1}, \ref{figb1} we plotted the last 2000 elements of the 3000 long trajectory of six points with respect to the map $G_{\varepsilon,3}^2$. On figures \ref{figc2}, \ref{figa2}, \ref{figb2} and figures \ref{figc3}, \ref{figa3}, \ref{figb3} we plotted the last 9000 elements of the 10000 long trajectory of six points with respect to the map $F_{\varepsilon,3}$. On figures \ref{figc3}, \ref{figa3}, \ref{figb3} we are looking at the unit cube in the direction of the coordinate $w=x+y+z$ and on figures \ref{figc2}, \ref{figa2}, \ref{figb2} from a slightly different angle. One can see that if $\varepsilon \geq \frac{4-\sqrt{10}}{2} \approx 0.419$, then six invariant components appear both in the factor and the original map.

  It is useful to interpret our results in terms of the original coupled system, that is, the evolution of the position of three sites on $\mathbb{T}$. Let the position of these three sites be $x,y,z \in \mathbb{T}$.  Let $d$ be the quasimetric on $\mathbb{T}$ such that $d(x,y)$ is the length of the counterclockwise arc from $x$ to $y$. Remember that the coordinates $u,v$ of the system $G^2_{\varepsilon,3}$ corresponded to $x-y$ mod 1 and $y-z$ mod 1, respectively. A union of two hexagons, containing the attractor of this system (when $\varepsilon < \frac{1}{2}$,) was plotted on figure \ref{figd3}. The three green lines on the figure (which separate invariant components when  $\frac{4-\sqrt{10}}{2} \leq \varepsilon$) correspond to positions in our original system for which $d(x,y)=d(y,z)$, $d(x,y)=d(z,x)$ and $d(y,z)=d(z,x)$. Hence the invariant components correspond to the following types of states:
   \begin{align*}
   I: & \quad \{x,y,z \in \mathbb{T}: d(x,y) < d(z,x) < d(y,z)\}, \\
   II: & \quad\{x,y,z \in \mathbb{T}: d(x,y) < d(y,z) < d(z,x)\}, \\
   III: &\quad \{x,y,z \in \mathbb{T}: d(y,z) < d(x,y) < d(z,x)\}, \\
   IV: &\quad \{x,y,z \in \mathbb{T}: d(y,z) < d(z,x) < d(x,y)\}, \\
   V: &\quad \{x,y,z \in \mathbb{T}: d(z,x) < d(y,z) < d(x,y)\}, \\
   VI: &\quad \{x,y,z \in \mathbb{T}: d(z,x) < d(x,y) < d(y,z)\}.
   \end{align*}
   Each invariant set $I-VI$ has a component in each hexagon, to be denoted as $Ia, \dots, VIa$ and $Ib, \dots,VIb$. Each component will have a part which stays in the same hexagon (to this correspond states where $\frac{1}{2} < d(x,y), \frac{1}{2} < d(y,z)$ or $\frac{1}{2} < d(z,x)$) - this means that the order of $x,y,z$ on $\mathbb{T}$ will stay the same. There will also be a part which will have an image in the other hexagon - this means that the order of $x,y,z$ will be reversed on $\mathbb{T}$. So if the three sites can fit on an arc with length shorter than $\frac{1}{2}$, their order will stay the same, otherwise it will be reversed.

By the formula which gives the boundary of the hexagons (see figure \ref{figd2}) one can conclude that the points in the Milnor attractor correspond to states for which $\frac{\varepsilon}{3} \leq d(x,y), d(y,z), d(z,x)$. So the long time behaviour of the system is that the sites can get only as close as a third of the coupling strength.

  By interchanging the role of $x,y,z$ we can see that the the domains $I,III,V$ map onto each other, the same is true for domains $II,IV,VI$. Hence in terms of the original
 system with identical sites, only two invariant sets of states exist.

Recall the definitions of the points $A$, $B$ and $C$ from Discussion~\ref{d3}. Direct inspection shows that the components can be described as given there. For instance, in configurations of $Ia$ the three site locations $x, y$ and $z$ follow each other in a counterclockwise order on the circle, $d(x,y)$, $d(y,z)$ and $d(z,x)$ measure the length of the corresponding arcs, of which $d(x,y)$ is the shortest and $d(z,x)$ is the longest; the three quantities add up to 1. If the locations of $y$ and $z$ are exchanged, a configuration in $Ib$ is obtained, for which $d(x,y)$, $d(y,z)$ and $d(z,x)$ correspond to unions of two arcs, and hence add up to $2$, yet their relation defining $I$ persists. Similar description applies to all other components.

   Let us comment finally on the case $\frac{1}{2} < \varepsilon < 1$, when two limit behaviours are possible. The first happens when the initial state of the system corresponds to a point outside the hexagons pictured on figure \ref{figd2} (this roughly means, that the sites are close, in most cases they can fit on an arc with length less than $\frac{1}{2}$). In this case the sites synchronize: they converge to a limit state evolving according to the doubling map. Otherwise they asymptotically acquire an evenly placed position on $\mathbb{T}$.

\section{Continuum of sites} \label{4}

\subsection{The case of small $\varepsilon$: weak interaction}

In this section we study the case when the initial measure is absolutely continuous with respect to the Lebesgue measure on $\mathbb{T}$, which we shall denote by $\lambda$. Let the density function of the initial measure be of smoothness $ C^1$. Denoting the total variation on $\mathbb{T}$ by  $|\cdot|_{TV}$, we further assume that $|f|_{TV}=\delta$ for some $0 \leq \delta$.

Let $\text{d}\mu=f\text{ d}\lambda$. It will suffice to index the dynamics by the density $f$.
\begin{equation} \label{master0}
F_{f}(x)=2x+2\varepsilon \int_0^1g(y-x)f(y)\text{ d}y \qquad \text{ mod 1}.
\end{equation}
First notice that if $f=1$ (that is $\mu=\lambda$), $F_{f}(x)=2x$ mod 1. We can calculate the pushforward density using the Perron-Frobenius operator.
\[
\mathcal{L}_f f(x)=\frac{1}{2}\left(f\left(\frac{x}{2}\right)+f\left(\frac{x+1}{2}\right) \right)=\frac{1}{2}(1+1)=1.
\]
Hence
\[
(F_{1})_{\ast}\lambda=\lambda.
\]
This means that uniformly distributed sites will stay uniformly distributed.

Let us transform formula \eqref{master0} into a more convenient form.  Observe that $g(y-x)=y-x-n(y-x)$ where $n(y-x)$ is an integer depending on $y-x$. More precisely,
\[
n(y-x)=\begin{cases}
-1 & \text{ if } \quad -\frac{3}{2} < y-x < -\frac{1}{2} \\
0 & \text{ if } \quad  -\frac{1}{2} < y-x < \frac{1}{2} \\
1 & \text{ if } \quad \frac{1}{2} < y-x < \frac{3}{2}
\end{cases}
\]
so
\[
n(y-x)=\begin{cases}
-1 & \text{ if } \quad 0 < y < x-\frac{1}{2} \\
0 & \text{ if }  \quad x-\frac{1}{2} < y < x+\frac{1}{2} \\
1 & \text{ if } \quad x+\frac{1}{2} < y < 1
\end{cases}
\]
Using this,
\begin{align*}
F_{f}(x)&=2x+2\varepsilon \left(\int_0^1yf(y)\text{ d}y-x\int_0^1f(y)\text{ d}y-\int_0^1n(y-x)f(y)\text{ d}y \right) \hspace{3.35cm} \text{ mod} 1, \\
F_{f}(x)&=2(1-\varepsilon)x+2\varepsilon \int_0^1yf(y)\text{ d}y-2\varepsilon\int_0^{x-\frac{1}{2}}-f(y)\chi_{\left[x > \frac{1}{2}\right]}\text{ d}y-2\varepsilon\int_{x+\frac{1}{2}}^1f(y)\chi_{\left[ x \leq \frac{1}{2} \right]} \qquad \text{ mod} 1.
\end{align*}
Finally, we get
\begin{equation} \label{master1}
F_{f}(x)=
\begin{cases}
2(1-\varepsilon)x+2\varepsilon \left( \int_0^1 yf(y)\text{ d}y-\int_{x+\frac{1}{2}}^1 f(y)\text{ d}y\right) \hspace{0.15cm} \text{ mod } 1 & \text{ if } x \leq \frac{1}{2} \\
2(1-\varepsilon)x+2\varepsilon \left( \int_0^1 yf(y)\text{ d}y+\int_0^{x-\frac{1}{2}}f(y)\text{ d}y\right) \text{ mod } 1 & \text{ if } x > \frac{1}{2}
\end{cases}
\end{equation}

This function is continuous on the circle:  note that $F_{f}(0)=F_{f}(1)$ mod 1, since $F_{f}(1)-F_{f}(0)=2-2\varepsilon+2\varepsilon \int_0^1f(y)\text{ d}y=2$. This also shows that $F_{f}$ is a degree two covering map of the circle. The derivative
\[
F'_{f}(x)=
\begin{cases}
2\left(1-\varepsilon+\varepsilon f\left(x+\frac{1}{2}\right)\right)& \text{if } 0 < x \leq \frac{1}{2}, \\
2\left(1-\varepsilon+\varepsilon f\left(x-\frac{1}{2}\right)\right)& \text{if } \frac{1}{2} < x \leq 1,
\end{cases}
\]
 is also continuous. Note that $F'_{f} > 0$, hence $F_{f}$ is monotone increasing. The second derivative is
\[
 F''_{f}(x)=
 \begin{cases}
 2\varepsilon f'\left(x+\frac{1}{2}\right) & \text{if } 0 < x \leq \frac{1}{2}, \\
 2\varepsilon f'\left(x-\frac{1}{2}\right) & \text{if } \frac{1}{2} < x \leq 1.
 \end{cases}
\]
  Summing up our observations, the graph of $F_{f}$ has two continuous, increasing branches, which map their domain onto the whole circle. We are going to denote the inverse of these branches by $y_1$ and $y_2$.

Now we move on to prove Theorem \ref{t4}. In this theorem we stated that if we push forward our initial measure with the dynamics $F_{f}$, the resulting measure will have a $C^1$ density function with total variation less than a fraction of $\delta$. Iterating this, we get that the density will tend to a density function with total variation zero (this is necessarily the constant density 1), so the distribution tends to uniform in $BV$ -- if $\varepsilon$ is sufficiently small with respect to $\delta$.

\begin{proof}[Proof of Theorem \ref{t4}.]

Let $f_0=1+g_{\delta}$. Note that since $f_0$ is a density, the integral of $g_{\delta}$ is zero, hence the bound on the total variation implies $\sup |g_{\delta}| < \delta$. Substitutig $f_0=1+g_{\delta}$ into \eqref{master1}, we see that the dynamics takes the form
\begin{align*}
& F_{f_0}(x)=
\begin{cases}
2(1-\varepsilon)x+2\varepsilon \left( \int_0^1 y(1+g_{\delta}(y))\text{ d}y-\int_{x+\frac{1}{2}}^1 (1+g_{\delta}(y))\text{ d}y\right) \hspace{0.15cm} \text{ mod } 1 & \text{ if } x \leq \frac{1}{2} \\
2(1-\varepsilon)x+2\varepsilon \left( \int_0^1 y(1+g_{\delta}(y))\text{ d}y+\int_0^{x-\frac{1}{2}}(1+g_{\delta}(y))\text{ d}y\right) \text{ mod } 1 & \text{ if } x > \frac{1}{2}
\end{cases} \\
& F_{f_0}(x)=
\begin{cases}
2x-2\varepsilon x+\varepsilon-2\varepsilon\left(\frac{1}{2}-x\right)+2\varepsilon \left( \int_0^1 yg_{\delta}(y)\text{ d}y-\int_{x+\frac{1}{2}}^1 g_{\delta}(y)\text{ d}y\right) \hspace{0.15cm} \text{ mod } 1 & \text{ if } x \leq \frac{1}{2} \\
2x-2\varepsilon x+\varepsilon+2\varepsilon\left(x-\frac{1}{2}\right)+2\varepsilon \left( \int_0^1 yg_{\delta}(y)\text{ d}y+\int_0^{x-\frac{1}{2}}g_{\delta}(y)\text{ d}y\right) \text{ mod } 1 & \text{ if } x > \frac{1}{2}
\end{cases}
\end{align*}
Hence
\begin{equation} \label{master2}
F_{f_0}(x)=
\begin{cases}
2x+2\varepsilon \left( \int_0^1 yg_{\delta}(y)\text{ d}y-\int_{x+\frac{1}{2}}^1 g_{\delta}(y)\text{ d}y\right) \hspace{0.15cm} \text{ mod } 1 & \text{ if } x \leq \frac{1}{2} \\
2x+2\varepsilon \left( \int_0^1 yg_{\delta}(y)\text{ d}y+\int_0^{x-\frac{1}{2}} g_{\delta}(y)\text{ d}y\right) \text{ mod } 1 & \text{ if } x > \frac{1}{2}
\end{cases}
\end{equation}
Also note that
\[
F'_{f_0}(x)=
\begin{cases}
2\left(1+\varepsilon g_{\delta}\left(x+\frac{1}{2}\right)\right)& \text{if } 0 < x \leq \frac{1}{2}, \\
2\left(1+\varepsilon g_{\delta}\left(x-\frac{1}{2}\right)\right)& \text{if } \frac{1}{2} < x \leq 1,
\end{cases}
\]
and
\[
 F''_{f_0}(x)=
 \begin{cases}
 2\varepsilon g'_{\delta}\left(x+\frac{1}{2}\right) & \text{if } 0 < x \leq \frac{1}{2}, \\
 2\varepsilon g'_{\delta}\left(x-\frac{1}{2}\right) & \text{if } \frac{1}{2} < x \leq 1.
 \end{cases}
\]

The density function of the pushforward measure can be calculated by applying the Perron-Frobenius operator associated with $F_{f_0}$ to the density $f_0$. Let us use the notation $\mathcal{L}_{F_{f_0}}=\mathcal{L}_{f_0}$. The operator $\mathcal{L}_{f_0}$ is defined as
\[
\mathcal{L}_{f_0} f_0(x)=\sum_{y \in F_{f_0}^{-1}(x)}\frac{f_0(y)}{F_{f_0}'(y)}
\]
We note that the pushforward density $f_1$ always has the same smoothness as $f_0$, because both the inverse branches and the derivative of the dynamics has the same smoothness as $f_0$.

The total variation can be estimated in the following way:
\begin{align*}
&|\mathcal{L}_{f_0} f_0|_{TV}=\int_0^1 \left|\frac{d}{dx}\sum_{y \in F_{f_0}^{-1}(x)}\frac{f_0(y)}{F_{f_0}'(y)}\right|\text{ d}x \\
&=\int_0^1\left| \frac{f_0'(y_1(x))y_1'(x)}{F_{f_0}'(y_1(x))}-\frac{f_0(y)F_{f_0}''(y_1(x))y_1'(x)}{(F_{f_0}'(y_1(x)))^2}+\frac{f_0'(y_2(x))y_2'(x)}{F_{f_0}'(y_2(x))}-\frac{f_0(y)F_{f_0}''(y_2(x))y_2'(x)}{(F_{f_0}'(y_2(x)))^2}\right|\text{ d}x \\
&\leq \int_0^1\left| \frac{f_0'(y_1(x))y_1'(x)}{F_{f_0}'(y_1(x))}\right|\text{ d}x+\int_0^1\left|\frac{f_0(y)F_{f_0}''(y_1(x))y_1'(x)}{(F_{f_0}'(y_1(x)))^2}\right|\text{ d}x+\int_0^1\left|\frac{f_0'(y_2(x))y_2'(x)}{F_{f_0}'(y_2(x))}\right|\text{ d}x \\
&+\int_0^1\left|\frac{f_0(y)F_{f_0}''(y_2(x))y_2'(x)}{(F_{f_0}'(y_2(x)))^2}\right|\text{ d}x=I+II+III+IV.
\end{align*}
Since $-\delta \leq |g_{\delta}| \leq \delta$, we get $\inf|F'_{f_0}|=\inf\left(2\left(1+\varepsilon g_{\delta}\left(x+\frac{1}{2}\right)\right)\right) \geq 2(1-\varepsilon \delta)$ (this implies $\sup \frac{1}{|F'_{f_0}|} \leq \frac{1}{2(1-\varepsilon \delta)}$) and $\sup|f_0| < 1+\delta$. But the most important observation is that in the distortion terms $II$ and $IV$, the derivative of $f_0$ appears, hence in these terms $g_{\delta}$ appears multiplied by $\varepsilon$. Using this we get the following upper bound:
\begin{align*}
&|\mathcal{L}_{f_0} f_0|_{TV} \leq \frac{1}{2(1-\varepsilon \delta)}\int_0^1\left|g_{\delta}'\left(y_1(x)\pm \frac{1}{2}\right)y_1'(x)\right|\text{ d}x+\frac{2\varepsilon (1+\delta)}{(2(1-\varepsilon \delta))^2}\int_0^1\left|g_{\delta}'\left(y_1(x)\pm \frac{1}{2}\right)y_1'(x)\right|\text{ d}x\\
&+\frac{1}{2(1-\varepsilon \delta)}\int_0^1\left|g_{\delta}'\left(y_2(x)\pm \frac{1}{2}\right)y_2'(x)\right|\text{ d}x
+\frac{2\varepsilon (1+\delta)}{(2(1-\varepsilon \delta))^2}\int_0^1\left|g_{\delta}'\left(y_2(x)\pm \frac{1}{2}\right)y_2'(x)\right|\text{ d}x \\ &\leq \frac{1}{2(1-\varepsilon \delta)}\int_0^1|g_{\delta}'(t)|\text{ d}t
+\frac{2\varepsilon (1+\delta)}{(2(1-\varepsilon \delta))^2}\int_0^1|g_{\delta}'(t)|\text{ d}t=\frac{1+\varepsilon}{2(1-\varepsilon \delta)^2}\int_0^1|g_{\delta}'(t)|\text{ d}t \\
&=\frac{1+\varepsilon}{2(1-\varepsilon \delta)^2}|f_0|_{TV}
\end{align*}
Observe that $c=\frac{1+\varepsilon}{2(1-\varepsilon \delta)^2} < 1$ if $\varepsilon < \frac{1}{1+4\delta}$.
\end{proof}
 It follows from the above calculation, that given $\delta$, for sufficiently weak interaction, the density function will converge to a density with zero total variation, hence the distribution of the sites will converge to uniform.

 \subsection{The case of large $\varepsilon$: strong interaction} \label{strongint}

  In this section we are going to introduce a class of initial densities, for which synchronization occurs when the coupling is sufficiently strong -- throughout this section, we are going to assume that $\frac{1}{2} < \varepsilon < 1$.

  \begin{figure}[h!]
   \centering
   \begin{tikzpicture}[scale=2]
         \draw (0,0) -- (2,0);
         \draw[->] (0,-0.1) -- (0,4);
         \draw[very thick, blue] (0,0) -- (0.7,0);
         \draw[very thick, blue] (1.7,0) -- (2,0);
         \draw[very thick, blue] (0.7,0) .. controls (1.2,0.2) and (1.1,3.5) .. (1.2,1.8);
         \draw[very thick, blue] (1.2,1.8) .. controls (1.3,1) and (1.23,3.4) .. (1.3,3.5);
         \draw[very thick, blue] (1.7,0) .. controls (1.2,0.2) and (1.4,3.4) .. (1.3,3.5);
         \foreach \x/\xtext in {1.7/b_2, 0.7/b_1,2/1}
             \draw[shift={(\x,0)}] (0pt,2pt) -- (0pt,-2pt) node[below] {$\xtext$};
       \end{tikzpicture}
       \caption{The type of density considered: the support is contained in an interval shorter than $\frac{1}{2}$.} \label{smallsup}
       \end{figure}
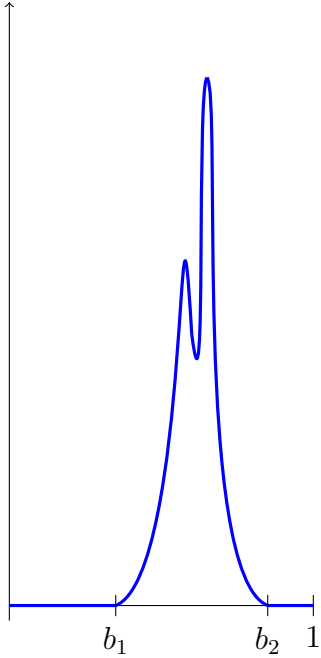

  Let $f_0 \in C(\mathbb{T})$, and let its support be contained in an interval on $\mathbb{T}$ with length less than $\frac{1}{2}$, see figure \ref{smallsup}. We are going to study the case $0 \leq b_1 < \frac{1}{2} < b_2 \leq 1$ in detail, other cases (for example $0 < b_2 < b_1 < 1$) can be handled quite similarly.

 We are first going to prove Theorem \ref{t5}, stating that such densities will converge to a point mass evolving according to the doubling map, when strong interaction is considered.

 \begin{proof}[Proof of Theorem \ref{t5}.]
 Using formula \eqref{master1}, the dynamics defined by such a density is
 \[
 F_{f_0}(x)=\begin{cases}
 2(1-\varepsilon)x+2\varepsilon\left(M-\int_{x+\frac{1}{2}}^1f_0(y)\text{ d}y \right) & \text{ if } 0 \leq x < b_2-\frac{1}{2} \hspace{2cm} \text{ mod } 1\\
 2(1-\varepsilon)x+2\varepsilon M & \text{ if } b_2-\frac{1}{2} \leq x < b_1+\frac{1}{2} \hspace{1.11cm} \text{ mod } 1 \\
 2(1-\varepsilon)x+2\varepsilon\left(M+\int^{x-\frac{1}{2}}_0f_0(y)\text{ d}y \right) & \text{ if } b_1+\frac{1}{2} \leq x < 1 \hspace{2cm} \text{ mod } 1
 \end{cases}
 \]
 because now the center of mass is $M=\int_0^1 yf_0(y)\text{ d}y$. For an illustration, see figure \ref{dyn2}.
 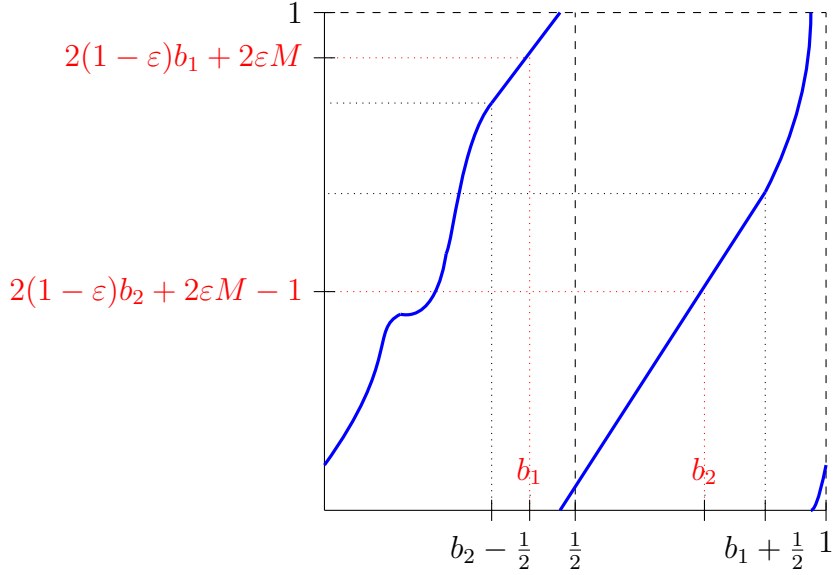
\begin{figure}[h!]
  \centering
  \begin{tikzpicture}[scale=2]
        \draw (0,0) -- (3.3,0);
        \draw (0,0) -- (0,3.3);
        \draw[dashed] (3.3,0) -- (3.3,3.3);
        \draw[dashed] (0,3.3) -- (3.3,3.3);
        \draw[dashed] (1.65,0) -- (1.65,3.3);
        \draw[dotted] (1.1,2.7) -- (0,2.7);
        \draw[dotted] (2.9,2.1) -- (0,2.1);
        \draw[dotted] (1.1,0) -- (1.1,2.7);
        \draw[dotted] (2.9,0) -- (2.9,2.1);
        \draw[dotted,red] (1.35,0) -- (1.35,3) -- (0,3);
        \draw[dotted,red] (2.5,0) -- (2.5,1.45) -- (0,1.45);

        \draw[very thick, blue] (1.1,2.7) -- (1.55,3.3);
        \draw[very thick, blue] (1.55,0) -- (2.9,2.11);
        \draw[very thick, blue] (0,0.3) .. controls (0.5,1) and (0.3,1.2) .. (0.5,1.3);
        \draw[very thick, blue] (0.5,1.3) .. controls (0.75,1.25) and (0.8,1.7) .. (0.8,1.7);
        \draw[very thick, blue] (0.8,1.7) .. controls (0.85,1.8) and (0.9,2.45) .. (1.1,2.7);
        \draw[very thick, blue] (2.9,2.11) .. controls (3.1,2.5) and (3.2,2.9) .. (3.2,3.3);
        \draw[very thick, blue] (3.2,0) .. controls (3.22,0) and (3.25,0.05) .. (3.3,0.3);
        \foreach \x/\xtext in {1.65/\frac{1}{2},2.9/b_1+\frac{1}{2},1.1/b_2-\frac{1}{2},3.3/1}
            \draw[shift={(\x,0)}] (0pt,2pt) -- (0pt,-2pt) node[below] {$\xtext$};
        \foreach \x/\xtext in {2.5/\textcolor{red}{b_2},1.35/\textcolor{red}{b_1}}
                   \draw[shift={(\x,0.1)}] (0pt,0pt) -- (0pt,0pt) node[above] {$\xtext$};
        \draw (2.5,-0.07) -- (2.5,0.07);
        \draw (1.35,-0.07) -- (1.35,0.07);
        \foreach \y/\ytext in {3/\textcolor{red}{2(1-\varepsilon)b_1+2\varepsilon M },1.45/\textcolor{red}{2(1-\varepsilon)b_2+2\varepsilon M-1},3.3/1}
                   \draw[shift={(0,\y)}] (2pt,0pt) -- (-2pt,0pt) node[left] {$\ytext$};
      \end{tikzpicture}
      \caption{The dynamics $F_{f_0}$ defined by the density $f_0$} \label{dyn2}
      \end{figure}
 If $b_2-b_1 \leq \frac{1}{2}$, then $[b_1,b_2] \subseteq \left[b_2-\frac{1}{2}, b_1+\frac{1}{2} \right]$, so
 \begin{align*}
 F_{f_0}(b_1)&=2(1-\varepsilon)b_1+2\varepsilon M, \\
 F_{f_0}(b_2)&=2(1-\varepsilon)b_2+2\varepsilon M-1.
 \end{align*}
 This means that the pushforward density is
 \[
 f_1=\mathcal{L}_{f_0} f_0(x)=\begin{cases}
 \frac{f_0\left(\frac{x-2\varepsilon M+1}{2(1-\varepsilon)} \right)}{2(1-\varepsilon)} & \text{ if } 0  \leq x \leq  2(1-\varepsilon)b_2+2\varepsilon M-1 \\
 \frac{f_0\left(\frac{x-2\varepsilon M}{2(1-\varepsilon)} \right)}{2(1-\varepsilon)} & \text{ if } 2(1-\varepsilon)b_1+2\varepsilon M \leq x \leq 1 \\
 0 & \text{ otherwise }
 \end{cases}
 \]
  If we use the notation $b'_1=2(1-\varepsilon)b_1+2\varepsilon M$ and $b'_2=2(1-\varepsilon)b_2+2\varepsilon M-1$ for the boundary of $\supp f_1$, we see that
 \[
 d(b'_1,b'_2)=|b'_2-b'_1|=1-b'_1+b'_2=2(1-\varepsilon)|b_2-b_1|=2(1-\varepsilon)d(b_1,b_2).
 \]
 The supremum of $f$ is now multiplied with a factor of $ \frac{1}{2(1-\varepsilon)}$.

 It remains to prove that the center of mass evolves according to the doubling map.

 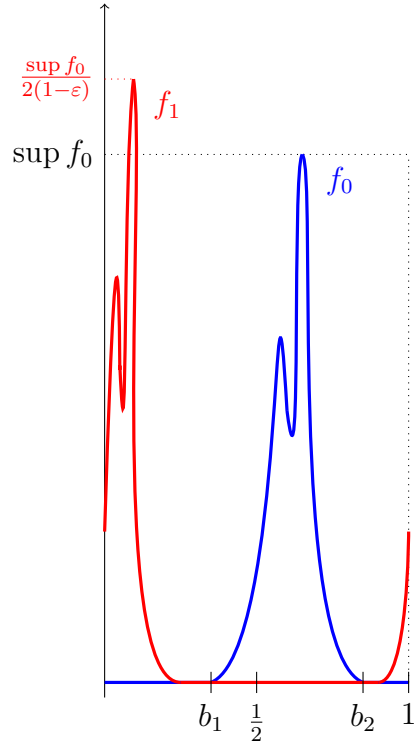
\begin{figure}[h!]
  \centering
  \begin{tikzpicture}[scale=2]
        \draw (0,0) -- (2,0);
        \draw[->] (0,-0.1) -- (0,4.5);
        \draw[dotted] (2,0) -- (2,3.5);
        \draw[dotted,red] (0,4) node[left] {$\frac{\sup f_0}{2(1-\varepsilon)}$} -- (0.2,4);
        \draw[dotted] (0,3.5) node[left] {$\sup f_0$} -- (2,3.5);
        \draw[very thick, blue] (0,0) -- (0.7,0);
        \draw[very thick, blue] (1.7,0) -- (2,0);
        \draw[very thick, blue] (0.7,0) .. controls (1.2,0.2) and (1.1,3.5) .. (1.2,1.8);
        \draw[very thick, blue] (1.2,1.8) .. controls (1.3,1) and (1.23,3.4) .. (1.3,3.5);
        \draw[very thick, blue] (1.7,0) .. controls (1.2,0.2) and (1.4,3.4) .. (1.3,3.5) node [below right] {\hspace{0.15cm}$f_0$};
        \draw[very thick, red,shift={(-1.1,0)}] (1.6,0) -- (2.9,0);
        \draw[very thick, red,shift={(-1.1,0)}] (1.1,1) .. controls (1.2,4.2) and (1.2,1.8) .. (1.2,2.1);
        \draw[very thick, red,shift={(-1.1,0)}] (2.9,0) .. controls (3.1,0) and (3.1,1) .. (3.1,1);
        \draw[very thick, red,shift={(-1.1,0)}] (1.2,2.1) .. controls (1.25,1) and (1.22,3.4) .. (1.29,4);
        \draw[very thick, red, shift={(-1.1,0)}] (1.6,0) .. controls (1.1,0) and (1.38,3.4) .. (1.29,4) node[below right] {\hspace{0.1cm}\textcolor{red}{$f_1$}};
        \foreach \x/\xtext in {1/\frac{1}{2},1.7/b_2, 0.7/b_1,2/1}
            \draw[shift={(\x,0)}] (0pt,2pt) -- (0pt,-2pt) node[below] {$\xtext$};
      \end{tikzpicture}
      \caption{The density $f_0$ and the pushforward density $f_1$} \label{pushf2}
      \end{figure}

 \begin{align*}
 M(f_1)&=\int_0^{2(1-\varepsilon)b_2+2\varepsilon M-1}(y+1)\frac{f_0\left(\frac{y-2\varepsilon M+1}{2(1-\varepsilon)} \right)}{2(1-\varepsilon)}\text{ d}y+\int_{2(1-\varepsilon)b_1+2\varepsilon M}^1 y \frac{f_0\left(\frac{y-2\varepsilon M}{2(1-\varepsilon)} \right)}{2(1-\varepsilon)} \text{ d}y \\
 &=\int_{\frac{1-\varepsilon M}{2(1-\varepsilon)}}^{b_2}(2(1-\varepsilon)t+2\varepsilon M)f_0(t)\text{ d}t+\int^{\frac{1-\varepsilon M}{2(1-\varepsilon)}}_{b_1}(2(1-\varepsilon)t+2\varepsilon M)f_0(t)\text{ d}t \\
 &=\int_{b_1}^{b_2}(2(1-\varepsilon)t+2\varepsilon M)f_0(t)\text{ d}t \\
 &=2\int_{b_1}^{b_2}tf_0(t)\text{ d}t-2\varepsilon\int_{b_1}^{b_2}tf_0(t)\text{ d}t+2\varepsilon M \int_{b_1}^{b_2}f_0(t)\text{ d}t \\
 &=2\int_{b_1}^{b_2}tf_0(t)\text{ d}t-2\varepsilon M+2\varepsilon M=2M \qquad \text{ mod } 1.
 \end{align*}
 \end{proof}

 Since $2(1-\varepsilon) < 1$ if $\frac{1}{2} < \varepsilon < 1$, we see that when such strong interaction is considered, the density is rescaled in such a way that the length of its support shrinks to a fraction of the original length, while the supremum multiplies by some constant greater than 1. The density is also shifted such that the center of mass evolves according to the doubling map. Iterating this, we see that the density converges to a point mass evolving according to the doubling map.

 We note that when $\varepsilon = \frac{1}{2}$, the density suffers no rescaling, it simply shifts on the circle. If the center of mass is zero, such densities happen to be invariant.

 Let us now look at the case when the support of the density is not contained in an interval shorter than the half of the torus, but an interval of length shorter than $\frac{1}{2}$ supports most of the mass. In Theorem \ref{t6} we stated that in this case the dynamics will shrink the support of the density into an interval of length less than $\frac{1}{2}$ in one step for sufficiently large $\varepsilon$. From then our previous analysis will apply.

 \begin{proof}[Proof of Theorem \ref{t6}.]
 The dynamics takes the form
 \[
 F_{f_0}(x)=\begin{cases}
 2(1-\varepsilon)x+2\varepsilon\left(M-\int_{x+\frac{1}{2}}^1f_0(y)\text{ d}y \right) & \text{ if } 0 \leq x < b_2-\frac{1}{2} \hspace{2cm} \text{ mod } 1 \\
 2(1-\varepsilon)x+2\varepsilon M & \text{ if } b_2-\frac{1}{2} \leq x < b_1+\frac{1}{2} \hspace{1.11cm} \text{ mod } 1 \\
 2(1-\varepsilon)x+2\varepsilon\left(M+\int^{x-\frac{1}{2}}_0f_0(y)\text{ d}y \right) & \text{ if } b_1+\frac{1}{2} \leq x < 1 \hspace{2cm} \text{ mod } 1
 \end{cases}
 \]
 The pushforward density $f_1$ will have a support contained in the interval of $\mathbb{T}$ $[0,b_2'] \cup [b_1',0]$, where
 \begin{align*}
 b_1'&=2(1-\varepsilon)b_1+2\varepsilon\left(M-\int_{b_1+\frac{1}{2}}^1f_0(y)\text{ d}y \right) \hspace{0.43cm} \text{ mod 1}, \\
 b_2'&=2(1-\varepsilon)b_2+2\varepsilon\left(M+\int^{b_2-\frac{1}{2}}_0f_0(y)\text{ d}y \right) \quad \text{mod 1}.
 \end{align*}
 From this,
 \[
 d(b_2',b_1')=|b'_2-b'_1|=2(1-\varepsilon)(b_2-b_1)+2\varepsilon\mathcal{C},
 \]
 where
 \[
 \mathcal{C}=\int_0^{b_2-\frac{1}{2}}f_0(y)\text{ d}y+\int_{b_1+\frac{1}{2}}^1f_0(y)\text{ d}y.
 \]
 We see that $2(1-\varepsilon)(b_2-b_1)+2\varepsilon\mathcal{C} \leq \frac{1}{2}$ holds precisely if $\frac{(b_2-b_1)-\frac{1}{4}}{(b_2-b_1)-\mathcal{C}} \leq \varepsilon$.
 \end{proof}

 We conclude our discussion of the case of absolutely continuous initial measure by noting that in all three cases studied, the time averages $A(T)=\frac{1}{T}\sum_{t=0}^{T-1}\mu_t$ converge to Lebesgue. In the first case, when $\varepsilon$ is small compared to the total variation of the initial density, in fact $\lim_{t \to \infty}\mu_t=\lambda$ holds, since the corresponding densities $f_t$ converge to 1 in BV. In the latter two cases, the limit $\lim_{t \to \infty}\mu_t$ does not exist, but the measures $\mu_t$ converge to a point mass with support evolving according to the doubling map, hence $\lim_{t \to \infty}A(T)=\lambda$ (in typical cases, namely when the center of mass of the initial distribution is a Lebesgue-typical point of the doubling map.)

\section{Concluding remarks} \label{5}

Our main goal in this paper was to contribute to the understanding of synchronization phenomena that arise in globally (mean field) coupled chaotic maps. Specifically,
we investigated diffusively coupled doubling maps. In this setting, and specifically for $N=3$ sites, two distinct values of the coupling parameter were identified in the literature: $\eps=\frac12$  which corresponds to the emergence of contracting directions (\cite{koiller2010coupled}), and a strictly smaller value of $\eps$ which corresponds to the loss of ergodicity (\cite{fernandez2014breaking}). In particular we have addressed two questions: How these dynamical phenomena can be interpreted in terms of the synchronization of the sites? Is there a way to detect analogous  phenomena in the system with a continuum of sites? To this end, we have reformulated the problem in terms of studying the evolution of distributions on the circle. This point of view incorporates the case of finitely many sites (pure point measures) and allows to include the case of a continuum of sites (absolutely continuous distributions).

Our main findings are as follows:
\begin{itemize}
\item A bifurcation arises prior to the emergence of contracting directions even in the case of $N=2$ sites, which corresponds to the loss of mixing (the system remains ergodic throughout the expanding regime).
\item For $N=3$ sites, the ergodic components that arise have a clear interpretation in terms of the relative positions of the sites. In particular, this provides further evidence that the loss of ergodicity corresponds to the breaking of the inversion and the permutation symmetries.
\item For $\eps>\frac12$, when contracting directions are present, the $N=3$ system has two asymptotic states: either the sites take the same position on the circle (attractive synchronization) or the three sites are evenly distributed on the circle (repulsive synchronization). The basins of these asymptotic states can be identified, in particular, for sufficiently concentrated initial distributions the configuration converges to the attractive, while for sufficiently even initial distributions it converges to the repulsive initial state.
\item For absolutely continuous measures modeling a continuum of sites, on the one hand we have proved that an initial distribution that is sufficiently close to the uniform tends, for an appropriately small $\eps$, to the uniform distribution asymptotically. On the other hand, for $\eps>\frac12$, we have identified a class of sufficiently concentrated initial distributions that tend to a point mass evolving chaotically on the circle. In particular, for $\eps>\frac12$ two different limit behaviours can be identified, which both attract a class of -- even vs. concentrated -- initial distributions.
\end{itemize}

As for the the last two items, we can conclude that for $\eps>\frac12$ (the contracting regime) analogous dynamical phenomena can be observed for a continumm of sites and for finitely many sites (more precisely $N=3$) sites. Whether it is possible to detect dynamical phenomena in the system with a continuum of sites that is analogous to the loss of ergodicity  observed for $N=3$, it remains to be seen. In fact, we do not expect phenomena analogous to the breaking of the permutation symmetry as this cannot be interpreted at the level of distributions. The breaking of the inversion symmetry, however, requires further investigation.

To conclude we make one more comment concerning this last point. For a class of measures on the circle, it is natural to consider the support of the distribution. In case of finitely many sites, this is the shortest proper subinterval on which all the sites are located. For absolutely continuous distributions, this is the subinterval (if such an interval exists) on which the density is strictly positive, and outside of which the density vanishes; this makes sense, in particular, for the distributions of Theorem~\ref{t5}. When restricted to the
support, it makes sense to consider the mean (or center of mass) for such a measure, as in Theorem~\ref{t5}.

Now recall from Discussion~\ref{d3} that the loss of ergodicity corresponding to even and odd components can be interpreted as a breaking of the inversion symmetry.
This can be formulated as follows: for odd and even components the center of mass is located to the left and to the right, respectively, of the geometric midpoint of
the supporting interval. It is then possible to investigate the analogous property for a continuum of sites. To this end, we have seen in section~\ref{4} that for
$\eps>\frac12$ the sufficiently concentrated distributions evolve in such a way that the center of mass moves according to the doubling map, while the distribution is simply
rescaled. Hence the property whether the center of mass is to the left or to the right of the geometric midpoint of the support is invariant for the dynamics. On the other hand, for $\eps<\frac12$ it is easy to see that any absolutely continuous initial distribution will be supported on the whole circle after sufficiently many steps of the dynamics.
Thus, the question whether the breaking of inversion symmetry arises in this case, requires different tools and should be the subject of future research.

\section*{Acknowledgements} Stimulating discussions with Bastien Fernandez are thankfully acknowledged. We are grateful
to B\'alint T\'oth for his suggestion to consider the evolution of distributions. We thank the anonymous referees for their valuable comments. This work was
partially supported by Hungarian National Foundation for Scientific Research (NKFIH OTKA)
grant K104745, and Stiftung Aktion \"Osterreich Ungarn (A\"OU) grants 87\"ou6 and 92\"ou6.
\newpage

\bibliographystyle{plain}
\bibliography{references2}
 \nocite{*}

\end{document}